\def\re{\color{black}}
\renewcommand{\sharp}{\natural}
\newcounter{ar}
\newcounter{ab}
\newcommand{\mysection}[1]{\section{\large\bf #1}}
\newtheorem{defn}[subsection]{Definition}
\newtheorem{prop}[subsection]{Proposition}
\newtheorem{theo}[subsection]{Theorem}
\newtheorem{lemma}[subsection]{Lemma}
\newtheorem{corll}[subsection]{Corollary}
\newtheorem{lolemma}[subsection]{Local Lemma}
\newenvironment{rem}{\smallskip\noindent%
\refstepcounter{subsection}%
{\bf \thesubsection}~~{\sc Remark.}\hspace{-1mm}}
{\smallskip}
\newenvironment{ex}{\smallskip\noindent%
\refstepcounter{subsection}%
{\bf \thesubsection}~~{\sc Example.}}
{\smallskip}
\newenvironment{proof}[1]{\noindent {\em Proof#1.}}
{~$\square$\smallskip}
\newenvironment{ackn}{\medskip \noindent \small
{\sl Acknowledgments.}}{\bigskip}
\newenvironment{smallbibl}[1]
{\small
}
\newcommand{\dbcoh}{{\mathcal D}^b_{\rm coh}}
\newcommand{\Hom}{{\rm Hom}}
\renewcommand{\mod}{{\rm mod}}
\renewcommand{\d}{{\rm d}}
\renewcommand{\P}{{\mathcal P}}
\newcommand{\id}{{\rm id}}
\newcommand{\Cvect}{{\rm C}^{\cdot}({\rm Vect})}
\newcommand{\ZZ}{{\mathbb Z}}
\newcommand{\CC}{{\mathbb C}}
\newcommand{\RR}{{\mathbb R}}
\newcommand{\QQ}{{\mathbb Q}}
\newcommand{\HH}{{\mathbb H}}
\renewcommand{\O}{{\mathcal O}}
\newcommand{\C}{{\mathcal C}}
\newcommand{\A}{{\mathcal A}}
\newcommand{\E}{{\mathcal E}}
\newcommand{\F}{{\mathcal F}}
\newcommand{\G}{{\mathcal G}}
\newcommand{\D}{{\mathcal D}}
\newcommand{\M}{{\mathcal M}}
\newcommand{\U}{{\mathcal U}}
\newcommand{\Ao}{{\mathcal A}_{\omega}}
\newcommand{\Fo}{{\mathcal F}_{\omega}}
\newcommand{\Eo}{{\mathcal E}_{\omega}}
\newcommand{\Ho}{{\mathcal H}o}
\newcommand{\Rhom}{{\mathbb R}{\rm Hom}}
\newcommand{\RHom}{{\mathbb R}{\mathcal H}om}
\newcommand{\toiso}{\xrightarrow{\sim\;}}
\newcommand{\dbar}{\bar\partial}
\newcommand{\bu}[1][.5]{{\raisebox{0.30ex}{\scalebox{0.5}{\text{$\bullet$}}}}}
\newcommand{\tr}{\mathrm{tr}}
\newcommand{\ch}{\mathrm{ch}}
\newcommand{\barb}{\bar\beta}
\newcommand{\bard}{\bar D}
\def\beq{\begin{equation}}
\def\eeq{\end{equation}}
\begin{document}

\title{\vspace{1.3cm}
\Large\sc Coherent Sheaves, Chern Classes, and Superconnections on Compact Complex-Analytic Manifolds}

\author{
Alexey Bondal
and
Alexei Rosly
}

\date{June 1, 2022}

\maketitle
\begin{center}
    {\it Dedicated to the memory of Igor Shafarevich \\ 
    on the occasion of 
    his 
    100\textsuperscript{\,th} Anniversary}
\end{center}
\begin{abstract}

We construct a twist-closed enhancement of the category $\dbcoh (X)$, the bounded derived category of complexes 
of $\O_X$-modules with coherent cohomology, by means of the DG-category of $\dbar$-superconnections. 
Then we apply the techniques of
$\dbar$-superconnections to define Chern classes and Bott-Chern classes of objects 
in the category, in particular, of coherent sheaves.
\end{abstract}

\newpage

\mysection{\sc Introduction}
The derived category of coherent sheaves is known to be a meaningful homological
invariant of an algebraic variety. The basic motivation for the authors of the
present paper was the wish to understand to which extent the derived category is a
good invariant for complex analytic manifolds.

Let $X$ be a smooth compact complex-analytic manifold and $\dbcoh (X)$ the derived category
of $\O_X$-modules with bounded coherent cohomology. There are some indications that
this category is not as good as it is in the algebraic case. 
First, a result in \cite{Verb1} implies that $\dbcoh (X)$ are equivalent for all K3 surfaces
$X$ with no curves. Hence, the derived category does not `feel' the complex geometry
of the generic K3 surface. 
This situation is very different for algebraic varieties. 
If the canonical or anticanonial class of $X$ is ample, then the variety can be uniquely reconstructed from its derived category \cite{BO}. The number of abelian varieties derived equivalent to a given one is finite \cite{O}.
There
is at most countably many algebraic varieties in a given class of derived equivalence
(\cite{AT}). An example of a projective variety with infinite number of derived partners is a 3-dimensional projective space with 8 suitably chosen points blown up \cite{L}.

Second, a wonderful property of the derived category of coherent sheaves on a smooth
proper algebraic variety is that it satisfies a property similar to Brown
representability. Namely, the category is {\em saturated}, i.e all bounded
cohomological functors with values in vector spaces are representable (see \cite{BK1},
\cite{BVdB}). It was shown in \cite {BVdB} that for the derived category of a smooth
compact complex surface with no curves (say, a generic K3) this property does not
hold. It was also conjectured in {\it loc.cit.} that if the derived category is saturated then the
variety is (an analytification of) an algebraic space. The conjecture was
proven by B.\,To\"en and M.\,Vaqui\'e \cite{TV2}, though they used an {\it a priori} stronger
(DG) version of saturatedness.

It was probably these facts that lead to the common opinion that the
derived category was not a meaningful homological invariant in the complex-analytic
case. Literally, this was correct. However, there was some ambiguity in what
sort of natural structure is reasonable to fix when considering derived categories.
In particular, it was mentioned quite a time ago that it made sense to consider
triangulated categories together with {\em enhancements}, a sort of enrichments of
the categories with a DG-structures \cite{BK2}. Results of this paper suggest that some good
(that is, twist-closed) enhancements are very relevant to complex-analytic geometry of manifolds.

We construct a DG-enhancement, $\C_X$, of $\dbcoh (X)$ by $\dbar$-superconnections.
Its objects are DG-modules  over the DG-algebra of Dolbeault forms, with suitable properties.
Our first result is that the homotopy category of this DG-category is equivalent to
$\dbcoh (X)$. A similar enhancement was independently considered by J. Block \cite{Block}. In particular, we discuss the fully faithfulness of the functor that gives the equivalence. Also we do not assume that every object in 
$\dbcoh (X)$ has a presentation by a finite complex of locally free sheaves
of $\O_X$-modules (as in the proof of Lemma 4.6 in {\it loc.cit}). A counter-example of C.\ Voisin \cite{Voisin}
shows that this does not hold for all complex-analytic manifolds. The work by N.\ Pali \cite{P1}, \cite{P2} where he described coherent sheaves in terms of $\dbar$-connections on special sheaves of modules over smooth functions was inspiring for us at the early stage of this project.

An important property of the category $\C_X$ is that it is {\em twist-closed}, which
means that the functors constructed via twisted complexes in the category are
representable. Twisted complexes are solutions of the Maurer-Cartan equation with
values in the (degree one) endomorphisms of objects in the DG-category. Note that
twist-closed categories are pre-triangulated, {\it i.e.} so-called one-sided twisted
complexes are representable, but it is crucial for constructing moduli of objects in the 
complex-analytic set-up to have representability of all twisted complexes.

Using the dg-enhancement via superconnections we discuss Chern character for objects in $\dbcoh (X)$.
To this end, we extend any $\dbar$-superconnection to a (non-flat) De Rham superconnection 
in the sense of Quillen \cite{Q} via a suitable choice of Hermitian metrics. 
As a result, we see that the $p$-th coefficient of the Chern character for an object in $\dbcoh (X)$ 
is represented by a closed  $(p,p)$-form. 

This suggests the idea to define characteristic classes of objects in $\dbcoh (X)$ which lie in Bott-Chern cohomology. 
The developed techniques for Chern classes of superconnections allows us to do this in a relatively easy way. 

We leave for another publication the extension of the theory of $\dbar$-superconnections to 
non-compact varieties $X$ and its application to constructing moduli spaces of objects in $\dbcoh (X)$. 

When proving the theorem on the DG-enhancement via $\dbar$-superconnections we follow our Kavli IPMU preprint in 2011 \cite{BR}. 
The applications to Chern and Bott-Chern characteristic classes was presented by the second-named author 
in his talk on the Russian-Japanese conference "Categorical and analytic invariants in Algebraic Geometry II" 
at Kavli IPMU, Tokyo University, in 2015 \cite{Ros}. 
We are pleased to dedicate this work to the memory of a famous mathematician and strong personality which was Igor Rostislavovich Shafarevich. 

Having prepared this paper for publication, we learned about the sizeable preprint 
by J.-M. Bismut, Shu Shen and Zhaoting Wei \cite{BSW}, where the authors addressed similar questions, but also discuss Grothendieck-Riemann-Roch theorem. It would be interesting to compare the approaches of the two papers. 

We thank Mikhail Kapranov and Maxim Kontsevich for valuable comments on the subject of this paper. 

\bigskip
\begin{ackn}
The reported study was funded by RFBR and CNRS, project number 21-51-15005.
	The work of A. I. Bondal was performed at the Steklov International Mathematical Center and supported by the Ministry of Science and Higher Education of the Russian Federation (agreement no. 075-15-2022-265).
	This research was partially supported by World Premier International Research Center Initiative (WPI Initiative), MEXT, Japan.
	This work was supported by JSPS KAKENHI Grant Number JP20H01794.
	The work of A. Rosly has been also funded 
	within the framework of the HSE University Basic Research Program.
\end{ackn}

\mysection{DG-Enhancements}

In this section we recall some facts on DG-categories and introduce the notion of
twist-closed DG-categories.

DG-categories can be considered as a particular class of $A_{\infty}$-categories. The
theory has a direct generalization to the $A_{\infty}$-case. We avoid this more
general context here, because natural enhancements which one meets in complex geometry
have a DG-structure.

Let $\C$ be an additive DG-category over a field. This means that we have finite direct sums
of objects, morphisms between any two objects constitute a $\ZZ$-graded complex of
vector spaces over the field, and Leibniz rule for the composition of morphisms is
satisfied. We will denote the space of degree $i$ morphisms in $\C$ by $\Rhom ^i$.

Two objects $A,B\in \C$ are said to be DG-isomorphic if there exists an invertible
degree zero closed morphism $f\in \Rhom^0 (A, B)$. Accordingly, one defines
DG-isomorphism of functors.

We assume that the category is equipped with an equivalence $T\!:\C\to \C$, called
translation functor, together with a DG-isomorphism of functors:
$$
\mu:\id \to T,
$$
where $\mu$ has degree $-1$.

If $\C$ is a DG-category, then its {\em homotopy category} $\Ho\,\C$ is defined as a
category with the same objects as in $\C$, but morphisms are the degree zero homology
of complexes of morphisms in $\C$.

Now we explain the ideology (based on \cite{BK2}) of twisted complexes and functors, 
which they represent.

A {\it twisted complex} in a DG-category $\C$ is a pair $W=(E, \alpha)$, where $E$ is an
object in $\C$ and $\alpha$, a {\em twisting cochain},  is in $\Rhom ^1(E, E)$ and
satisfies Maurer-Cartan equation:
$$
\d \alpha +\alpha ^2=0.
$$

Any twisted complex $T=(E, \alpha)$ defines a contravariant functor
$$
h^W:\C\to \Cvect,
$$
where $\Cvect$ stands for the category of complexes of vector spaces. The functor is
defined on $A\in \C$ by:
$$
A\mapsto \Rhom^{\bu} (A, E),
$$
but the differential in $\Rhom^{\bu} (A, E)$ is twisted by $\alpha$:
$$
\d_{\alpha}=\d_{\Rhom^{\bu} (A, E)} +\alpha.
$$
The Maurer-Cartan equation for $\alpha$ implies that $\d _{\alpha}^2=0$.

We will also consider twisted complexes of a particular form. A twisted complex
$W=(E, \alpha)$ is called {\em one-sided}, if the object $E$ is decomposed into a
finite direct sum $E=\oplus E_i$ and $\alpha$ is given by a strictly upper triangular matrix with
respect to this decomposition.

The basic example of a one-sided twisted complex is obtained by taking $E$ to be of
the form
\begin{equation}\label{onesided}
E=E_1\oplus E_2
\end{equation}
and $\alpha$
to belong to  $\Rhom ^1(E_1, E_2)$, a subspace in
$\Rhom^1(E, E)$. Note that $\alpha^2$ is automatically zero for such $\alpha$.
Therefore, the Maurer-Cartan equation reduces to $\d \alpha =0$. Thus $\alpha$ can be
interpreted as a closed degree zero morphism $E_1\to E_2[1]$.

In the following definition, DG-categories are assumed to be
additive and equipped with translation functors.
\begin{defn}
\begin{itemize}
\item[(i)] A DG-category is called {\em twist-closed} if $h^W$ is
representable for any twisted complex $W$.
\item[(ii)] A
DG-category is called {\em pre-triangulated} if $h^W$ is
representable for any one-sided twisted complex $W$.
\end{itemize}
\end{defn}

The following theorem was proved in \cite{BK2}.

\begin{theo}
If $\C$ is pre-triangulated, then $\Ho \C$ is naturally
triangulated.
\end{theo}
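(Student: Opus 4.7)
The plan is to equip $\Ho \C$ with a triangulated structure in the standard way: the shift functor is induced by the DG translation $T$, and distinguished triangles arise from cones of closed degree-zero morphisms, which are guaranteed to exist by the pre-triangulated hypothesis. Since $T$ is a DG-equivalence preserving closed degree-zero morphisms, it descends to an autoequivalence of $\Ho \C$ that serves as the shift. For any closed degree-zero morphism $f \colon A \to B$, I would package it as a one-sided twisted complex $W_f$ of the form \eqref{onesided} (after shifting one summand by means of the natural transformation $\mu$), for which the Maurer--Cartan equation reduces to the closedness of $f$. The pre-triangulated hypothesis then produces an object $C_f$ representing $h^{W_f}$ --- the mapping cone of $f$ --- together with canonical morphisms $B \to C_f$ and $C_f \to A[1]$ read off from the universal property.

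Distinguished triangles in $\Ho \C$ are declared to be those isomorphic (in $\Ho \C$) to a cone triangle $A \xrightarrow{f} B \to C_f \to A[1]$. Axioms \textbf{TR1} (every morphism embeds in a distinguished triangle, identity triangles are distinguished) and \textbf{TR3} (morphism of triangles) are essentially formal consequences of the universal property: a compatible pair of morphisms between the first two vertices of two cone triangles extends to a morphism of the underlying one-sided twisted complexes, and representability upgrades this to a morphism of cones filling the diagram. Axiom \textbf{TR2} (rotation) requires identifying the natural candidate triangle $B \to C_f \to A[1] \xrightarrow{-f[1]} B[1]$ with the cone triangle of the morphism $B \to C_f$; this is achieved by exhibiting an explicit homotopy equivalence between two one-sided twisted complexes that compute the two cones, whose existence is again forced by the universal property.

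The main obstacle is \textbf{TR4}, the octahedral axiom. Given composable closed degree-zero morphisms $f \colon A \to B$ and $g \colon B \to C$, I would form a three-term one-sided twisted complex with underlying object a suitable shift of $A \oplus B \oplus C$ and twisting having only two nonzero components, $f$ from the first summand to the second and $g$ from the second to the third; Maurer--Cartan is automatic in the strictly upper triangular setting. Appropriate two-term subcomplexes and quotient complexes of this three-term object compute the cones $C_f$ and $C_g$, while collapsing the middle summand yields a two-term twisted complex computing $C_{gf}$. The object representing the full three-term twisted complex, together with the comparison morphisms induced by the universal property, assembles into the desired octahedron. The delicate point is that although the individual comparison morphisms are only well-defined up to homotopy in $\C$, the universality of the representing functors makes the relevant diagrams commute in $\Ho \C$, so the octahedron closes without ambiguity, completing the verification that $\Ho \C$ is naturally triangulated.
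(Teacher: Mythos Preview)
The paper does not give its own proof of this theorem: it is quoted from \cite{BK2}, and the paragraph following the statement only records the guiding idea---that the object representing $h^W$ for a two-term one-sided twisted complex of the shape \eqref{onesided} plays the role of the cone of the corresponding morphism in $\Ho\C$. Your outline of TR1--TR3 is in line with that idea and with the argument in \cite{BK2}.

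Your treatment of TR4, however, contains a real gap. You form a three-term one-sided twisted complex on shifts of $A\oplus B\oplus C$ with only the two components $f$ and $g$, and assert that ``Maurer--Cartan is automatic in the strictly upper triangular setting.'' That is false once there are three or more summands: with $\alpha_{13}=0$ the equation $d\alpha+\alpha^2=0$ reads in the $(1,3)$-slot as $\alpha_{23}\alpha_{12}=0$, i.e.\ $gf=0$ in $\C$, which you certainly do not have. Allowing $\alpha_{13}\ne 0$ does not rescue the construction either, since $d\alpha_{13}=-\alpha_{23}\alpha_{12}$ would demand a nullhomotopy of $gf$. Consequently your ``collapsing the middle summand'' step has nothing to collapse. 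The verification in \cite{BK2} proceeds differently: one writes down the explicit closed degree-zero morphism $u\colon C_f\to C_{gf}$ given on underlying graded pieces by $(\id_{A[1]},g)$, forms its cone as a \emph{four}-term one-sided twisted complex on $A[2]\oplus B[1]\oplus A[1]\oplus C$, and contracts the acyclic sub-twisted-complex on the two copies of $A$ (linked by the identity) to obtain a homotopy equivalence with $C_g$. This supplies the distinguished triangle $C_f\to C_{gf}\to C_g\to C_f[1]$ and closes the octahedron.
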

The idea behind this theorem is that the twisting cochain $\alpha$ in a one-sided
twisted complex of the form (\ref{onesided}) defines a morphism
${\tilde \alpha}:E_1\to E_2[1]$ in $\Ho \C$ and the object that represents the
functor $h^W$ gives a cone of ${\tilde \alpha}$ in the homotopy category. Thus the
basic hereditary problem of the axiomatics of triangulated categories that the cones
of morphisms are not canonical is resolved by lifting morphisms in a triangulated
category to closed morphisms in an appropriate DG-category.

Another reason why the DG-context looks to be more suitable is that the pre-triangulatedness is
a property of a DG-category, while to make a category triangulated one has to put an
extra structure on the category (to fix a class of exact triangles). The price to pay
for transferring into the DG-world is that one has to consider DG-categories up to an
appropriate equivalence relation, i.e. there is always a variety of equivalent
choices for DG-categories "representing" a given triangulated category.

\begin{defn} If $\D$ is a triangulated category, then a
pre-triangulated category $\C$ together with an equivalence of triangulated
categories $\Ho\,\C{\to} \D$ is said to be an {\em enhancement} of $\D$. The category
$\D$ is then said to be enhanced. A functor between two DG-categories is said to be
a {\em quasi-equivalence} if it induces an equivalence of the corresponding homotopy
categories.
\end{defn}

It is clear from definitions that a twist-closed DG-category is pre-triangulated,
hence its homotopy category is naturally triangulated. It will be crucial for our
further constructions to have enhancements which are twist-closed.

{\bf NB!} The twist-closedness is not preserved under
quasi-equivalences.

The following example shows that a standard enhancement of the
derived category of coherent sheaves on an algebraic variety is
not twist-closed.

\begin{ex} Let $X$ be an algebraic variety with the structure
sheaf $\O_X$ and $\D =\dbcoh (X)$ the derived category of complexes of $\O_X$-modules
with bounded coherent cohomology. Consider the DG-category $\C={\rm I}(X)$ of bounded
below complexes of injective $\O_X$-modules with bounded coherent cohomology. By
definition, this is a full subcategory in the DG-category $C^{\bu}(\O_X\mbox{-mod})$ of
complexes of $\O_X$-modules. It is known that ${\rm I}(X)$ (not the
$C^{\bu}(\O_X\mbox{-mod})$) is an enhancement of $\dbcoh (X)$ \cite{BK2}, \cite{BLL}.

Let $E$ be a complex in ${\rm I}(X)$ with the differential $\d$, such that some terms
of $E$ are not coherent $\O_X$-modules (typically, neither of them is). Consider the
twisted complex $W=(E,\alpha)$ with $\alpha =-\d$. One can easily see that the functor
$h^W$ is not representable. Indeed, it is represented by the object in
$C^{\bu}(\O_X\mbox{-mod})$ which is a complex with the same graded components as $E$ and
with trivial differential. Its cohomology is not coherent.

\end{ex}

Here is an example of a twist-closed enhancement.

\begin{ex} Let $\D=\D^b (\mbox{mod-$A$})$ be the bounded derived category of
(right) modules over an
associative algebra $A$ of finite global
dimension. Consider the DG-category $\C=\P(A)$ consisting of perfect complexes, i.e.
finite complexes of finitely generated projective $A$-modules. This is a full
subcategory in the DG-category $C^{\bu}(\mbox{mod-$A$})$ of all complexes of $A$-modules.
Again, $\P(A)$ (and not $C^{\bu}(\mbox{mod-$A$})$) is an enhancement of $\D^b (\mbox{mod-$A$})$.
Every twisted complex $W=(E,\alpha)$ over this category produces a functor $h^W$
which is representable by the same graded module $E$ but with the new differential
$$
d_T=d_E+\alpha .
$$
This is a perfect complex. Hence the enhancement is twist-closed.
\end{ex}

\mysection{An Enhancement via $\dbar$-superconnections}

We are going to construct a twist-closed enhancement of $\dbcoh (X)$.
It will be a DG-category $\C=\C_X$ whose homotopy category is
equivalent to $\dbcoh (X)$. The idea of the construction can be explained via Koszul
duality applied to the algebra of differential operators ({\it cf.}\ \cite{Kap}). This goes
along the following lines.

\subsection{\sc A Viewpoint via Koszul Duality}

Denote $\A ^{i,j}=\A ^{i,j}_X$ the sheaf of smooth
complex-valued $(i,j)$-forms on $X$. For the sake of simplicity,
we will also use the notation $\A ^i=\A^i_X =\A^{0,i}$. In
particular, $\A^0_X$ is the sheaf of smooth functions on $X$. 
We regard $\A^{\bu}$ as a (sheaf of) dg-rings endowed with Dolbeault's differential $\dbar$, and 
use the notation $\A^\sharp$ for the same ring considered as a graded ring with no differential. 
We use
notation $\A^+=\A_X^+=\oplus_{i>0} \A ^{i}_X$ for the positive part of Dolbeault
complex. If
$\E$ is a locally free $\A^0$-module, then
$\A^{i,j}(\E)=\A_X^{i,j}(\E)=\A_X^{i,j}\otimes _{\A^0}\E$ denotes the sheaf of smooth
$(i,j)$-forms on $X$ with values in $\E$. Similarly, we put $\A (\E)=\A_X
(\E)=\A_X\otimes\E$.

A locally free coherent sheaf $E$ on $X$ is given by a smooth vector bundle $\E $ on $X$
with a flat $\dbar$-connection $\bar\nabla$, the sheaf $E$ being the sheaf of $\bar\nabla$-horizontal sections. 
One can interpret such a connection as a
module over the sheaf of algebras ${\mathcal D}^{\dbar}_X$ of $\dbar$-differential
operators on $X$. Algebra ${\mathcal D}^{\dbar}_X$ is filtered by degree of differential operators. 
The associated graded algebra is the symmetric algebra $S^{\bu}(T_X^{0,1})$ over $\A^0_X$ 
of the sheaf $T_X^{0,1}$ of vector fields of type $(0,1)$ on $X$. 
The quadratic dual to it over $\A^0_X$ is the  sheaf of graded algebras of smooth $(0,i)$-forms on $X$:
$$
\A_X^{\sharp}= \oplus \A ^{i}_X.
$$
A version of the non-homogeneous quadratic Koszul duality (\textit{cf.} \cite{Kap}) implies that the 
Koszul dual to ${\mathcal D}^{\dbar}_X$ itself is 
$$
\A^{\bu}_X=(\A_X^{\sharp}, \dbar ), 
$$
i.e. $\A^{\bu}_X$ regarded as a sheaf of DG-algebras 
equipped with Dolbeault differential $\dbar$. 

Note that $\A^0_X$ has a natural structure of a ${\mathcal D}^{\dbar}_X$-module. 
Besides, $\A^0_X$ is the zero component of the filtration
on the algebra ${\mathcal D}^{\dbar}_X$\,.  
Then, the Koszul duality at the level of algebras amounts to the following statement.

\begin{lemma}\label{Koszula}
If $X$ is a complex-analytic manifold of dimension $n$, then there is quasiisomorphism of sheaves of DG-algebras:
$$
\RHom_{{\mathcal D}^{\dbar}_X} (\A^0_X, \A^0_X)\simeq \A^{\bu}_X.
$$
\end{lemma}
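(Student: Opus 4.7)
The plan is to compute $\RHom$ via an explicit Spencer--Koszul resolution of $\A^0_X$ over $\mathcal{D}^{\dbar}_X$. Consider the complex of locally free left $\mathcal{D}^{\dbar}_X$-modules
\[
K^{-\bu}=\mathcal{D}^{\dbar}_X\otimes_{\A^0_X}\Lambda^{\bu}T^{0,1}_X,
\]
placed in non-positive degrees, equipped with the standard Spencer differential and augmented to $\A^0_X$ via the action on the unit. In local antiholomorphic coordinates $\bar z_i$ the differential reads
$P\otimes\partial_{\bar z_{i_1}}\wedge\cdots\wedge\partial_{\bar z_{i_k}}\mapsto\sum_{j}(-1)^{j-1}P\,\partial_{\bar z_{i_j}}\otimes\partial_{\bar z_{i_1}}\wedge\cdots\widehat{\partial_{\bar z_{i_j}}}\cdots\wedge\partial_{\bar z_{i_k}}$ (the commutator corrections required for a coordinate-free global formula vanish in coordinate frames). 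To see that $K^{-\bu}\to\A^0_X$ is a resolution, I would filter $\mathcal{D}^{\dbar}_X$ by order: the associated graded complex is the classical Koszul complex for $S^{\bu}(T^{0,1}_X)$ over $\A^0_X$, which resolves $\A^0_X$; a standard spectral sequence argument then yields acyclicity.

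Since each $K^{-k}$ is free over $\mathcal{D}^{\dbar}_X$ on the $\A^0_X$-module $\Lambda^{k}T^{0,1}_X$, one has
\[
\Hom_{\mathcal{D}^{\dbar}_X}(K^{-k},\A^0_X)\;\cong\;\Hom_{\A^0_X}(\Lambda^{k}T^{0,1}_X,\A^0_X)\;=\;\A^{k}_X,
\]
giving an isomorphism of graded sheaves $\Hom_{\mathcal{D}^{\dbar}_X}(K^{-\bu},\A^0_X)\cong\A^{\sharp}_X$. A direct local computation shows that the differential induced from the Spencer differential coincides, on $\omega\in\A^k_X$, with the familiar Koszul alternating-sum expression for $\dbar\omega$ evaluated on a $(k+1)$-tuple of antiholomorphic vector fields. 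This establishes the quasiisomorphism at the level of complexes of sheaves.

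It remains to upgrade this to a quasiisomorphism of sheaves of DG-algebras. The DG-algebra structure on $\RHom_{\mathcal{D}^{\dbar}_X}(\A^0_X,\A^0_X)$ is the Yoneda/composition product. On the Koszul side, I would equip $K^{-\bu}$ with a $\mathcal{D}^{\dbar}_X$-linear coassociative coproduct $K^{-\bu}\to K^{-\bu}\otimes_{\mathcal{D}^{\dbar}_X}K^{-\bu}$ built from the shuffle coproduct on $\Lambda^{\bu}T^{0,1}_X$; the Leibniz rule for the derivation action of $\mathcal{D}^{\dbar}_X$ on $\A^0_X$ is what makes such a coaction well-defined. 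Dualising this coproduct and pairing with the multiplication $\A^0_X\otimes_{\A^0_X}\A^0_X\to\A^0_X$ produces a cup product on the Hom-complex, which a direct computation identifies with the wedge product on $\A^{\bu}_X$. I expect this last step to be the main obstacle: one must verify $\mathcal{D}^{\dbar}_X$-linearity of the chosen coaction and track through all Koszul and Leibniz signs so that the induced DG-algebra structure is precisely the graded-commutative wedge product, and not some sign twist of it. Once this is done, the quasiisomorphism is manifestly one of sheaves of DG-algebras.
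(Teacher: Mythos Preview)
Your approach is essentially the same as the paper's: both use the $\dbar$-Spencer resolution $\mathcal{D}^{\dbar}_X\otimes_{\A^0_X}\Lambda^{\bu}T^{0,1}_X\to\A^0_X$ and then apply $\Hom_{\mathcal{D}^{\dbar}_X}(-,\A^0_X)$ to obtain the Dolbeault complex. In fact you go further than the paper, which simply asserts the result after applying the resolution; your filtration argument for acyclicity and your discussion of the shuffle coproduct to match the DG-algebra structures supply details the paper leaves implicit.
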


\textit{Proof.}~ Consider the ${\dbar}$-version of the Spencer complex (see \cite{Sabbah}),
{\it i.e.} a locally free resolution of $\A^0_X$  by left  ${\mathcal D}^{\dbar}_X$-modules:
$$
0\to {\mathcal D}^{\dbar}_X\otimes_{\A^0_X} \Lambda^nT_X^{0,1}\to\ldots 
\to {\mathcal D}^{\dbar}_X\otimes_{\A^0_X} \Lambda^2T_X^{0,1} \to {\mathcal D}^{\dbar}_X\otimes_{\A^0_X} T_X^{0,1} 
\to {\mathcal D}^{\dbar}_X \to \A^0_X\to 0.
$$
Applying this resolution to calculating the required $\RHom_{{\mathcal D}^{\dbar}_X} (\A^0_X, \A^0_X)$ gives the result. $\square$

A result of N.~Pali  (when slightly reformulated) identifies the abelian category of coherent $\O_X$-modules 
with a suitable subcategory of ${\mathcal D}^{\dbar}_X$-modules.

\begin{theo}\label{Palitheo} {\em\cite{P1}}
For a complex-analytic manifold $X$, the category of coherent $\O_X$-modules 
is equivalent to the category of ${\mathcal D}^{\dbar}_X$-modules which, as $\A^0_X$-modules, 
locally have a finite resolution by finite rank free $\A^0_X$-modules.
\end{theo}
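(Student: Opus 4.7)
The plan is to construct explicit quasi-inverse functors between the two categories. Send a coherent $\O_X$-module $\F$ to $\Phi(\F) := \A^0_X \otimes_{\O_X} \F$, endowed with the $\mathcal{D}^{\dbar}_X$-action given by the Dolbeault differential on the first factor (so $T_X^{0,1}$ acts via contraction with $\dbar$ combined with the Leibniz rule over $\A^0_X$). In the opposite direction, send a $\mathcal{D}^{\dbar}_X$-module $\M$ to its $\dbar$-horizontal sections $\Psi(\M) := \ker\bigl(\dbar_{\M}\colon \M \to \A^{0,1}_X \otimes_{\A^0_X} \M\bigr)$; by the Spencer/Koszul resolution used in the proof of Lemma~\ref{Koszula}, this coincides with $\mathcal{H}om_{\mathcal{D}^{\dbar}_X}(\A^0_X, \M)$.

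Next I would verify that $\Phi(\F)$ lies in the prescribed subcategory. By Oka's coherence theorem, $\F$ is locally finitely presented, and since the stalks $\O_{X,x}$ are regular local rings of dimension $n$, Hilbert's syzygy theorem furnishes a finite free $\O_X$-resolution of $\F$ of length at most $n$ in a neighborhood of any point. Malgrange's flatness theorem asserts that $\A^0_X$ is flat over $\O_X$, so tensoring this resolution with $\A^0_X$ produces the required finite free $\A^0_X$-resolution of $\Phi(\F)$. To see $\Psi \circ \Phi \cong \id$, I would prove the stronger statement that the full Dolbeault complex $\A^{0,\bu}_X \otimes_{\O_X} \F$ is a resolution of $\F$ concentrated in degree zero; locally, this reduces via the finite $\O_X$-free resolution of $\F$ to the classical $\dbar$-Poincar\'e lemma applied term by term.

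The main obstacle is the opposite composition, $\Phi \circ \Psi \cong \id$. Working locally one must show both that $\Psi(\M)$ is $\O_X$-coherent and that the canonical evaluation map $\A^0_X \otimes_{\O_X} \Psi(\M) \to \M$ is an isomorphism. The base case is where $\M$ is itself locally $\A^0_X$-free of finite rank with a flat $\dbar$-connection: here the Koszul--Malgrange integrability theorem yields a local frame of $\dbar$-horizontal sections, identifying $\M$ with $\A^0_X \otimes_{\O_X} \Psi(\M)$ and exhibiting $\Psi(\M)$ as a locally free coherent sheaf. The general case, where only a finite $\A^0_X$-free (not $\mathcal{D}^{\dbar}_X$-free) resolution of $\M$ is assumed, is handled by combining this base case with the Koszul resolution of Lemma~\ref{Koszula} into a bicomplex computing $\mathbb{R}\Psi(\M)$; the vanishing of the higher horizontal-section functors on locally $\A^0_X$-free modules collapses the spectral sequence, yielding coherence of $\Psi(\M)$ together with the desired isomorphism. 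Transferring the $\mathcal{D}^{\dbar}_X$-structure through the $\A^0_X$-free resolution in a consistent way is the technical heart of this step.
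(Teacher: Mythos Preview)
The paper does not prove this theorem; it is quoted from Pali \cite{P1} and the authors only remark that ``Pali's proof uses a choice of norms and Leray--Koppelman operators to prove the existence of solutions of a complicated system of differential equations. The results of the present paper imply Pali's theorem with no use of such involved analytic techniques.'' So there is no proof in the paper to compare yours against directly; what can be compared is your outline versus the alternative route the paper's machinery provides.

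Your setup of the functors $\Phi$, $\Psi$, the check that $\Phi(\F)$ has a finite free $\A^0_X$-resolution (via Oka, syzygy, and Malgrange flatness), and the verification $\Psi\circ\Phi\cong\id$ via the local Dolbeault lemma are all fine and match what the paper does elsewhere (see Lemma~\ref{lem-resol} and the end of the proof of Theorem~\ref{m-as-superc}).

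The gap is exactly where you flag it. Your bicomplex/spectral-sequence argument for $\Phi\circ\Psi\cong\id$ does not go through as written: the terms $\E^i$ of the $\A^0_X$-free resolution of $\M$ are \emph{not} $\mathcal{D}^{\dbar}_X$-modules, so there is no $\dbar$-differential on $\A^{0,\bu}_X\otimes_{\A^0_X}\E^i$ and hence no second differential to form a bicomplex. Equivalently, the phrase ``vanishing of the higher horizontal-section functors on locally $\A^0_X$-free modules'' presupposes a $\dbar$-connection on those modules that is not part of the data. What is actually needed is to \emph{lift} the flat $\dbar$-connection on $\M$ to a flat $\dbar$-superconnection on the resolution $\E^{\bu}$, i.e.\ to produce $\bar\nabla$'s on the $\E^i$'s commuting with the differential up to higher homotopies $\bar\beta_i$. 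This is precisely the content of Theorem~\ref{m-as-superc} in the paper (run with $\F=\M$ and $\bar\nabla_F$ the given connection). Once that lift exists, the Local Lemma~\ref{local} trivialises the superconnection over a polydisc to $\A_X\otimes_{\O_X}E^{\bu}$ for a finite complex of free $\O_X$-modules $E^{\bu}$; Corollary~\ref{loc-cohomology} then gives coherence of the cohomology, in particular of $\Psi(\M)=H^0$, and the isomorphism $\A^0_X\otimes_{\O_X}\Psi(\M)\toiso\M$ follows. So the ``technical heart'' you leave open is exactly what the paper's superconnection machinery supplies, whereas Pali's original proof replaces this algebraic lifting by an analytic construction via integral operators.
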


Koszul duality typically states that appropriate derived categories of modules over
Koszul dual algebras are equivalent. Thus, Lemma \ref{Koszula} together with Theorem \ref{Palitheo} give a reason to search for an enhancement of
$\dbcoh (X)$ among DG-categories of suitable $\A_X$-DG-modules. Note that Pali's proof uses a choice of norms and Leray-Koppelman operators to prove the existence of solutions of a complicated system of differential equations. The results of the present paper imply Pali's theorem with no use of such involved analytic techniques. 

{\re For the case of a locally free $\O_X$-module $E$, 
take $\E=\A^0\otimes_{\O_X}E$\,. Then, 
the $\dbar$-connection $\bar\nabla :\E\to \A^{0,1}(\E )$ can be extended to a differential in
$\A(\E )$. As a result, $\A(\E )$ acquires a natural structure of a DG-module over
$\A $. This is the module which corresponds to the sheaf $E$ of $\bar\nabla$-horizontal sections, }
suggested by Koszul duality.

We extend this correspondence to arbitrary objects in $\dbcoh (X)$. It is
natural to call $\A$-DG-modules of suitable type by $\dbar$-superconnections (see
below). 
It is easy to construct a
superconnection corresponding to an arbitrary object in $\dbcoh (X)$ 
if every coherent sheaf on $X$ has a resolution by locally free
$\O_X$-modules, which is true if $X$ is the analyticification 
of an algebraic variety
(the resolution can be constructed via an {\it ample system} of line bundles \cite{Il})
or if $X$ is a compact analytic surface as 
proved by Schuster \cite{Schuster}.

In general, locally free resolutions do not exist. A counterexample with a complex
torus is due to C. Voisin \cite{Voisin}. This makes the issue more subtle.

\subsection{\sc The category of $\dbar$-superconnections}

Before we start, let us agree on some notation concerning graded modules over 
supercommutative graded rings. 
Let $\mathcal{R}$ be such a ring, and $\M$ and $\mathcal{N}$ two graded left modules over $\mathcal{R}$. 
We denote by $\Hom_{\mathcal{R}}(\M,\mathcal{N})$ a left graded module of homomorphisms from $\M$ to $\mathcal{N}$ 
which satisfy $\forall\phi\in\Hom_{\mathcal{R}}(\M,\mathcal{N})$, $\forall\mu\in\M$, $\forall\omega\in\mathcal{R}$,
$$
  \phi(\omega\cdot\mu)=(-1)^{\deg\phi\cdot\deg\omega}\omega\cdot\phi(\mu)\,.
$$
Note that every left graded $\mathcal{R}$-module can be made a right module by defining
$$
    \mu\cdot\omega:=(-1)^{\deg\mu\cdot\deg\omega}\omega\cdot\mu \,.
$$
Then, what we defined as $\Hom_{\mathcal{R}}(\M,\mathcal{N})$ becomes simply homomorphisms which are 
$\mathcal{R}$-linear with respect to the right-multiplication.

Let $M$ be a bounded (left) DG-module over $\A$ (more precisely, a sheaf of modules). 
If $\bard$ is a differential in $M$, $s$ a section of $M$, and $\omega$ an element in $\A$,
then the Leibniz rule has to be satisfied:
\begin{equation}\label{leibniz}
\bard(\omega\cdot s)=\dbar\omega \cdot s+(-1)^{\deg
\omega}\omega\cdot \bard s
\end{equation}
Forgetting the differential, but not the grading in $M$, we obtain a graded module, $M^{\sharp}$, over
$\A^{\sharp}$. 
Define the objects of the category $\C=\C_X$
to be DG-modules $M$ over $\A$ for which $M^{\sharp}$ are locally free graded
modules  of finite rank over $\A^{\sharp}$. If $M$ is in $\C$, then it is, in
particular, a bounded graded complex of locally free $\A^{0}$-modules. We say that
objects of $\C$ are flat\footnote{
Until Section \ref{Chern}, we deal mainly with flat superconnections. Therefore, we will 
frequently omit the word 'flat'.}
$\dbar$-{\em superconnections}, which is inspired by Quillen's terminology in \cite{Q}.

Let $M$ and $N$ be
in $\C$. Then the graded sheaf
${\mathcal Hom}_{\A^{\sharp}}(M^{\sharp},N^{\sharp})$ is endowed with the structure 
of a left $\A^{\sharp}$-module and a differential  
$\bard_{\mathcal{H}om}(\phi)=\bard_N\circ\phi-(-1)^{\deg\phi}\phi\circ\bard_M$\,, 
which are related by the Leibniz rule. 
That is to say, sections
$\phi\in {\mathcal Hom}^{\bu}_{\A}(M,N)$, $\omega\in \A$, and $s\in M$ satisfy the
sign rule:
\begin{eqnarray}\label{gradhom}
    \phi (\omega\cdot s)=(-1)^{\deg\phi\cdot\deg\omega}\omega\cdot\phi(s),\\ 
    \bard_{\mathcal{H}om}(\omega\cdot\phi)=\dbar\omega\cdot\phi+
    (-1)^{\deg\omega}\omega\cdot\bard_{\mathcal{H}om}(\phi)\,.
\end{eqnarray}
Thus, the complex
${\mathcal Hom}^{\bu}_{\A}(M, N)=({\mathcal Hom}_{\A^{\sharp}}(M^{\sharp},N^{\sharp}),\bard_{\mathcal{H}om})$ 
is also an object in $\C$.
We define morphisms in the dg-category $\C$ as a complex of global sections of
${\mathcal Hom}_\A^{\bu}(M, N)$:
$$
\RR\Hom^{\bu}_{\C}(M, N):=\Gamma (X, {\mathcal Hom}^{\bu}_{\A}(M,
N))\,,
$$
equipped with the differential $\bard_{\Rhom}$ inherited from $\bard_{\mathcal{H}om}$\;.
\begin{prop}
$\C$ is a twist-closed DG-category.
\end{prop}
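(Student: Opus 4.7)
The plan is to represent, for each twisted complex $W=(M,\alpha)$ in $\C$, the functor $h^W$ by an object $M_\alpha$ obtained by twisting the differential of $M$. Concretely, I would take $M_\alpha$ to be the graded $\A^\sharp$-module underlying $M$, equipped with the new operator
$$
\bard_\alpha \;:=\; \bard_M + \alpha.
$$
Since the underlying graded module is unchanged, local freeness of finite rank over $\A^\sharp$ is automatic, so two things remain to be checked: that $\bard_\alpha$ satisfies the super-Leibniz rule (\ref{leibniz}), and that $\bard_\alpha^2=0$. The first is immediate from the sign rule (\ref{gradhom}) applied to the odd section $\alpha$: because $\deg\alpha=1$, one has $\alpha(\omega\cdot s)=(-1)^{\deg\omega}\omega\cdot\alpha(s)$, which matches the sign appearing in the Leibniz rule for $\bard_M$. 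The second reduces, via the identity $\bard_{\mathcal{H}om}(\alpha)=\bard_M\!\circ\alpha+\alpha\circ\bard_M$ (valid for odd $\alpha$), to the Maurer-Cartan equation $\dbar\alpha+\alpha^2=0$. Hence $M_\alpha\in\C$.

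Next I would verify that $M_\alpha$ represents $h^W$. As graded sheaves, ${\mathcal Hom}^\bu_{\A^\sharp}(A^\sharp,M_\alpha^\sharp)$ is literally the same as ${\mathcal Hom}^\bu_{\A^\sharp}(A^\sharp,M^\sharp)$, since the graded Hom forgets the differential; in particular, the underlying graded vector space of $\RR\Hom^\bu_\C(A,M_\alpha)$ agrees with that of $\RR\Hom^\bu_\C(A,M)$. Expanding the differential on the former inherited from $\bard_\alpha$ gives $\phi\mapsto\bard_{\RR\Hom}(\phi)+\alpha\circ\phi$, which is exactly the twisted differential $\d_\alpha$ appearing in the definition of $h^W$. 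Naturality in $A$ is tautological, so $\RR\Hom^\bu_\C(-,M_\alpha)\cong h^W$.

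Finally, $\C$ visibly carries the additive and translation structure required by the definition of twist-closedness: componentwise direct sums of finite-rank locally free $\A^\sharp$-modules remain such, and the usual shift $M\mapsto M[1]$ (with grading shifted and differential negated) furnishes a translation functor $T$, while the identity on underlying graded modules supplies the degree $-1$ isomorphism $\mu:\id\to T$. There is no serious obstacle in the argument; the only point requiring attention is the sign arithmetic, most notably that $\bard_{\mathcal{H}om}(\alpha)$ for odd $\alpha$ is the symmetric sum $\bard_M\alpha+\alpha\bard_M$ rather than a commutator, and that $\alpha(\omega s)=(-1)^{\deg\omega}\omega\,\alpha(s)$ is precisely what is needed for $\bard_\alpha$ to respect (\ref{leibniz}). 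Once these conventions are in place, twist-closedness is immediate from the explicit formula $\bard_\alpha=\bard_M+\alpha$.
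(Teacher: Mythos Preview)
Your proposal is correct and follows essentially the same approach as the paper: define the representing object by keeping the underlying graded $\A^\sharp$-module and replacing $\bard_M$ by $\bard_M+\alpha$, then use the sign rule (\ref{gradhom}) for $\deg\alpha=1$ to verify the Leibniz rule and the Maurer--Cartan equation to get $(\bard_M+\alpha)^2=0$. The paper's proof is terser (it simply asserts representability), while you spell out the identification of Hom-complexes and the additive/translation structure, but the substance is identical.
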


\noindent
\textit{Proof.}~
The translation functor is defined by the shift of grading of
DG-$\A$-modules. Hence, we need to prove that every twisted complex in $\C$ is
representable.

Let $M$ be in $\C$, $\bard$ the differential in $M$, and $\alpha\in \RR\Hom ^1_{\C }(M,M)$ a
solution of the Maurer-Cartan equation, $\bard_{\mathcal{H}om}\,\alpha+\alpha^2=0$. 
Since $\alpha$ satisfies the sign rule \eqref{gradhom} with $\deg\alpha=1$,
then $\bard+\alpha$ satisfies the Leibniz rule (\ref{leibniz}) and $(\bard+\alpha)^2=0$.
Hence $M(\alpha)$, the same $\A^{\sharp}$-module $M^{\sharp}$, but with a new
differential, $\bard+\alpha$, is again a DG-$\A$-module. Clearly, it belongs to $\C$ and is 
a representing object for the twisted complex defined by the pair $(M, \alpha )$.
$\square$

\begin{corll}
Category $\C$ is pre-triangulated. The homotopy category $\Ho (\C )$ is triangulated.
\end{corll}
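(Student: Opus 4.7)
The plan is to derive both assertions of the corollary directly from the preceding proposition together with the general framework established in Section 2, with essentially no new computation.

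First, I would invoke the proposition just proved: $\C$ is twist-closed, meaning that the representable functor $h^W$ is representable for \emph{every} twisted complex $W=(M,\alpha)$. Since a one-sided twisted complex is, by definition, a particular kind of twisted complex (where $M$ splits as a finite direct sum and $\alpha$ is strictly upper triangular), representability automatically passes to this special case. By definition of pre-triangulated, this gives the first assertion, namely that $\C$ is pre-triangulated. This is exactly the general fact already flagged in the text just after the definition of twist-closed.

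Second, I would cite the theorem of Bondal--Kapranov recalled earlier in the paper: if a DG-category is pre-triangulated, then its homotopy category carries a canonical triangulated structure, with cones coming from the objects representing one-sided twisted complexes (concretely, in our setup, from the construction $M(\alpha)$ used in the proof of the proposition, applied to twisting cochains of the form $\alpha\in\RR\Hom^{0}_{\C}(M_1,M_2[1])$). Combining with the previous paragraph yields the second assertion, that $\Ho(\C)$ is triangulated.

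There is no real obstacle here; the one point worth verifying en passant is that the translation functor on $\C$ implicitly used in the definition of twisted complex---a shift of grading of DG-$\A$-modules, together with a degree $-1$ DG-isomorphism $\mu:\id\to T$ obtained from the identity map on underlying graded modules---satisfies the formal requirements stated at the beginning of Section 2, so that the twist-closedness of the proposition and the general theorem of \cite{BK2} apply verbatim. This is immediate from the definitions of $\C$ and of its morphism complexes, so the corollary follows.
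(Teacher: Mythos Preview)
Your proposal is correct and matches the paper's (implicit) argument exactly: the corollary is stated without proof precisely because twist-closed trivially implies pre-triangulated, and then the Bondal--Kapranov theorem from \cite{BK2} gives the triangulated structure on $\Ho(\C)$.
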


Any object in $\C_X$ is by restriction along the embedding $\O_X \to \A ^0_X$ a complex of $\O_X$-modules, 
because $\O_X$ is $\dbar$-horizontal. If $M$ and $N$ are in
$\C_X$, then a closed morphism in $\RR \Hom^0_{\C} (M, N)$ obviously defines a morphism of
complexes of the corresponding $\O_X$-modules. Homotopy-equivalent closed morphisms
define isomorphic morphisms in $\D^b(\O_X\mbox{-mod})$, because the derived category factors
through the homotopy category of complexes of $\O_X$-modules. Hence we obtain a
functor:
\begin{equation}\label{functor}
\Phi :\Ho (\C_X) \to\D^b(\O_X\mbox{-mod}).
\end{equation}

The rest of this section is devoted to proving that $\C_X$ defines
an enhancement of $\dbcoh(X)$ via the functor $\Phi$.

\begin{theo} \label{equiv}
Let $X$ be a compact smooth complex-analytic manifold. Then $\Phi$ is a triangulated
equivalence between the homotopy category $\Ho (\C_X)$ and $\dbcoh (X)$.
\end{theo}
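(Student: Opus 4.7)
The plan is to prove the theorem in three substantive steps: (a) verify that $\Phi$ lands in $\dbcoh(X) \subset \D^b(\O_X\mbox{-mod})$; (b) essential surjectivity; (c) full faithfulness. Compatibility with the triangulated structure is then automatic, because $\C_X$ is twist-closed: the cone of a closed degree-zero morphism $f\colon M\to N$ is represented by the one-sided twisted complex with underlying $\A^\sharp$-module $N\oplus M[1]$ and twisting cochain $f$, and $\Phi$ sends this to the ordinary mapping cone of $\Phi(f)$ in $\D^b(\O_X\mbox{-mod})$.

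The cornerstone for both (a) and a local version of (b) is a local Koszul-duality comparison. Using the $\A^+$-adic filtration on an object $M \in \C_X$ together with local freeness of $M^\sharp$ over $\A^\sharp$, a homological perturbation argument shows that on sufficiently small Stein open sets $U\subset X$ the restriction $M|_U$ is quasi-isomorphic, as an $\A_U$-DG-module, to $\A_U \otimes_{\O_U} E^\bu$ for some bounded complex $E^\bu$ of finite-rank locally free $\O_U$-modules; this is the Koszul-dual incarnation of Pali's Theorem~\ref{Palitheo}. Since $\A_U \otimes_{\O_U} E^\bu \to E^\bu$ is a termwise Dolbeault resolution, it is a quasi-isomorphism of $\O_U$-complexes, which gives (a). Conversely, given a coherent sheaf $F$ on $X$, Cartan's Theorems~A and~B produce finite free $\O_{U_i}$-resolutions $E_i^\bu \to F|_{U_i}$ over a finite Stein cover $\{U_i\}$, hence local $\dbar$-superconnections $M_i := \A_{U_i} \otimes_{\O_{U_i}} E_i^\bu \in \C_{U_i}$ quasi-isomorphic to $F|_{U_i}$.

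Globalizing the construction to obtain $M_F \in \C_X$ with $\Phi(M_F)\simeq F$ relies on the fineness of the sheaf $\A^0_X$: partitions of unity combined with iterative Maurer-Cartan perturbations (permitted by twist-closedness) allow one to absorb the discrepancies between the $M_i$ into a global twisting cochain $\alpha$, producing a single DG-$\A$-module whose underlying $\A^\sharp$-module is locally free of finite rank. Extension from a single coherent sheaf to arbitrary bounded complexes of coherent sheaves then proceeds by induction on the length of cohomology via the triangulated structure on $\Ho(\C_X)$. For full faithfulness~(c), I would show that $\mathcal{H}om^\bu_\A(M,N)$ is an $\O_X$-complex quasi-isomorphic to $\RHom_{\O_X}(\Phi M, \Phi N)$: after the local identification $M|_U \simeq \A_U \otimes_{\O_U} E^\bu$, the natural isomorphism $\mathcal{H}om^\bu_\A(\A_U\otimes_{\O_U} E^\bu, N) \simeq \mathcal{H}om^\bu_{\O_U}(E^\bu, N)$ exhibits the left-hand side as a Dolbeault-style resolution of $\mathcal{H}om^\bu_{\O_U}(E^\bu, \Phi N)$. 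Because $\mathcal{H}om^\bu_\A(M,N)$ is a complex of $\A^0$-modules and therefore of soft sheaves, taking global sections commutes with cohomology and yields $H^k(\RR\Hom^\bu_{\C_X}(M,N)) \cong \Hom_{\dbcoh(X)}(\Phi M, \Phi N[k])$.

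The main obstacle is the global step in~(b). Voisin's counterexample rules out global locally free $\O_X$-resolutions, so one must genuinely exploit the acyclicity of Dolbeault resolutions to assemble the $M_i$. The delicate verification is that the perturbed result has underlying $\A^\sharp$-module locally free of finite rank (not merely flat over $\A^\sharp$) and that its quasi-isomorphism class in $\C_X$ is independent of the choice of Stein cover, of the local resolutions $E_i^\bu$, and of the partition of unity used. This is where the Maurer-Cartan perturbation lemma for twisted complexes combines with the fineness of $\A^0_X$, and where the strength of twist-closedness beyond pre-triangulatedness becomes essential.
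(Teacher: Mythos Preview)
Your outline for (a) and (c) is essentially the paper's. For (a), the paper proves a Local Lemma: any flat $\dbar$-superconnection on a polydisc is strict-gauge-equivalent to one of the form $\A\otimes_{\O}E^{\bu}$ with $E^{\bu}$ a bounded complex of free $\O$-modules; the proof is an explicit induction on the dimension of the polydisc rather than an abstract homological-perturbation argument, but the content is the same, and coherence of the cohomology follows immediately. For (c), your sketch matches the paper's argument almost verbatim. One ingredient you do not mention but will need in both steps is that $\A^0_X$ is flat over $\O_X$; the paper devotes its Appendix to this, reducing to Malgrange's theorem that smooth germs are flat over real-analytic germs.

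The genuine gap is in (b). You propose to take local holomorphic resolutions $E_i^{\bu}$ over a Stein cover, set $M_i=\A_{U_i}\otimes_{\O_{U_i}}E_i^{\bu}$, and then ``absorb the discrepancies between the $M_i$ into a global twisting cochain.'' But a twisting cochain is a degree-one endomorphism of a \emph{single fixed} graded $\A^{\sharp}$-module; it cannot mediate between the $M_i$, whose underlying graded bundles have different ranks and are not identified on overlaps. You flag yourself that obtaining a global locally free $\A^{\sharp}$-module of finite rank is the crux, but nothing in your sketch produces one. (Your worry about independence of choices is, by contrast, a non-issue: once any $M_F$ with $\Phi(M_F)\simeq F$ exists, essential surjectivity is done, and full faithfulness then gives uniqueness in $\Ho(\C_X)$ for free.)

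The paper avoids gluing superconnections altogether. It first constructs a \emph{global} resolution $0\to\E^{-n}\to\cdots\to\E^0\to\F\to 0$ of $\F=\A^0\otimes_{\O_X}F$ by finite-rank locally free $\A^0$-modules: since $\A^0$ is fine and $X$ compact, any $\A^0$-module that is locally finitely generated is globally finitely generated, so one iterates surjections from trivial bundles $(\A^0)^s$; flatness of $\A^0$ over $\O_X$ guarantees the kernels retain finite local homological dimension, so the process terminates. (Alternatively, one invokes Grauert and Atiyah--Hirzebruch to get a real-analytic resolution and tensors up by Malgrange.) With a single global $\E^{\bu}$ in hand, one sets $M^{\sharp}=\A^{\sharp}\otimes_{\A^0}\E^{\bu}$ and solves for $\bar D=\bar\gamma+\bar\nabla+\sum_{i\ge 2}\bar\beta_i$ inductively: lift the flat operator $\bar\nabla_F$ on $\F$ to compatible (non-flat) $\dbar$-connections on the $\E^i$, then solve $[\bar\gamma,\bar\beta_k]+u_k=0$ degree by degree, the obstructions lying in negative-degree cohomology of $\mathcal{E}nd^{\bu}_{\A^0}\E^{\bu}\simeq\mathbb{R}\mathcal{E}nd_{\A^0}\F$ and hence vanishing. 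The Maurer--Cartan solving is thus performed on one global object from the start, and twist-closedness is not invoked at this step.
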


\noindent
The strategy of the proof is as follows. First, 
we will show that any $\dbar$-super\-connection is a complex of $\O_X$-modules with
bounded coherent cohomology. 
Second, 
we will show that functor $\Phi$ is
fully faithful, i.e it retains homomorphisms between any two objects in $\Ho (\C_X )$.
Third, we will prove that any object in $\dbcoh (X)$ is
quasi-isomorphic to a $\dbar$-superconnection. The proof relies on the description of the
local structure of $\dbar$-superconnections.

\subsection{\sc The local structure of $\dbar$-superconnections}

Every finite complex of locally free $\O_X$-modules $E^{\bu}$ yields a
$\dbar$-superconnection by taking a tensor product of complexes
$$
\A_X\otimes_{\O_X}E^{\bu}\,
$$
with $\dbar\otimes\id_E$ as a differential. 
Here we will prove the technical statement which claims that any flat 
$\dbar$-superconnection is locally isomorphic (gauge-equivalent) 
to a $\dbar$-superconnection of this kind. We
believe that this fact is of independent interest.

Let $M$ be a $\dbar$-superconnection. Since $M^{\sharp}$ is locally free over $\A^{\sharp}$,
it can be non-canonically presented in the form:
\begin{equation}\label{decompmodule}
M^{\sharp}=\A^{\sharp}\otimes_{\A_X^0}\E^{\bu}
\end{equation}
where $\E^{\bu}=M/\A^+M$  
is a finitely generated graded locally free $\A_X^0$-module 
($\A^+\!\!=\oplus_{i>0} \A ^{i}_X$). To have
this presentation, choose an $\A_X^0$-linear splitting of the projection $M\to \E^{\bu}$ and use
the $\A^{\sharp}$-module structure of $M^{\sharp}$. Every $\E^i$ can be understood as
a smooth complex vector bundle on $X$. Consider the (non-canonical) bigrading of
$M^{\sharp}$ where $\A^i\otimes \E^j$ has bidegree $(i, j)$. The initial (canonical)
grading of
$\A^i\otimes \E^j$ in $M$ has the total degree $(i+j)$.

The differential $\bard$ in this module has the following decomposition with respect to
this bigrading\footnote{
The (bi)degree of a homogeneous operator acting in a (bi)graded space is that how much it changes the (bi)degree 
of any element it acts upon.}
\begin{equation}\label{expansion}
\bar D=\bar\gamma+\bar\nabla+\sum_{i\geqslant 2}\bar \beta_i\,.
\end{equation}
Here $\bar\gamma$ is the component of bidegree $(0,1)$, 
$\bar\nabla$ the component of
bidegree $(1,0)$, and $\bar\beta_i$ the component of degree $(i,1-i)$. The Leibniz rule
implies that $\bar\gamma$ is an endomorphism of $M^{\sharp}$ of total degree 1, hence it satisfies
the sign rule (\ref{gradhom}).
Therefore, $\bar\gamma$ is fully defined by $\A_X^0$-module homomorphisms
$\bar\gamma_j: \E^j\to \E^{j+1}$. 
$\bar\nabla$ can be understood as a set of not necessarily flat
$\dbar$-connections $\bar\nabla_j$ on $\E^j$. An important point is that these
connections are not necessarily flat (one can say, these are only homotopically flat, 
\textit{cf.}, the 3-d line of eq.\ \eqref{D2=0} below), 
which does not allow us at this stage to
endow $\E^j$ with the structure of a holomorphic vector bundle. By the Leibniz rule again, the components
$\bar\beta_i$'s 
correspond to $\A_X^0$-morphisms
$   \E^j\to \A^i\otimes\E^{j-i+1}$. In this
notation, the (bi)graded components of the condition
\begin{equation}
    \bar D^2=0
\end{equation}
read as a sequence of equations:
\begin{equation}\label{D2=0}
\begin{split}
    \bar\gamma ^2 & = 0\,, \\
    [\bar\gamma,\bar\nabla &] = 0\,, \\
    \bar\nabla^2+[\bar\gamma,&\bar\beta _2 ]  = 0\,, \\
    [\bar\nabla , \bar\beta_2 ]+&[\bar\gamma , \bar\beta _3]  =  0\,, \\
    [\bar\nabla , \bar\beta_3] +\bar\beta_2^{\;\;2}&+[\bar\gamma , \bar\beta_4]  = 0\,,
\end{split}
\end{equation}
and so on. Note that here and from now on, the bracket $[\,\cdot\,,\,\cdot\,]$ denotes a 
supercommutator, so, in eq.\ \eqref{D2=0}, it is actually an anticommutator, because 
all the participants are of odd degree in this case, for example,
$[\bar\gamma,\bar\nabla]= \bar\gamma\bar\nabla + \bar\nabla\bar\gamma$.

Note that if all $\bar\beta_i$'s were zero  these equations would be equivalent to
saying that $\bar\nabla$  give a holomorphic structure (that is a flat $\bar\partial$-connection) 
in all vector bundles $\E^j$'s, and, then, $\bar\gamma$
would be a holomorphic differential in a complex of holomorphic vector bundles.

Now, we choose  a point $x\in X$ and an open neighborhood of $x$ in analytic
topology on $X$. We consider local {\em gauge transformations} of the form:
\begin{equation}\label{gauge}
 \bar D'=e^{-\phi}\bar De^{\phi}
\end{equation}
with $\phi$ an $\A^{\sharp}$-module endomorphism of $M^{\sharp}$, which has degree 0
with respect to the canonical grading. Clearly, $\phi$ is defined by its values on
$\E^{\bu}$. Thus, we interpret $\phi$ as an element of $\Hom^0_{\A^0} (\E^{\bu},
\A^{\sharp}\otimes\E^{\bu})$. 
We say that the {\em gauge parameter} $\phi$ is {\em strict} if it has a decomposition
\begin{equation}\label{phi1+}
\phi=\phi_1+\phi_2+\dots
\end{equation}
where 
$\phi_i\in\oplus_j\Hom^0_{\A^0}(\E^{j},\A^{i}\otimes\E^{j-i})$ 
over the neighborhood of $x$. 
In other words, for a strict gauge parameter, the component 
$\phi_0\in \Hom^0_{\A^0}(\E^{\bu}, \E^{\bu}) $ is zero. Note, that this condition does not depend on the non-canonical splitting \eqref{decompmodule}.

The corresponding gauge 
transformation, $\exp\phi$, will be also called {\em strict}. Such gauge transformations can be regarded as a change 
of the non-canonical bigrading of $M$, which is the same as a choice of an isomorphism $\A^{\sharp}\otimes\E^{\bu} \toiso M^{\sharp}$, which in turn is the same as an automorphism $\A^{\sharp}\otimes\E^{\bu} \toiso\A^{\sharp}\otimes\E^{\bu}$ 
trivial on $\E^0 =\A^0\otimes\E^0$.

Transformation (\ref{gauge}) for components of $\bar D'$  reads:
$$
\bar\gamma '=\bar\gamma,
$$
i.e. $\gamma$ does not change,
$$
\bar\nabla '=\bar\nabla + [\bar\gamma , \phi_1],
$$
\begin{equation}\label{trlaw}
    \bar\beta_2'=\bar\beta_2+[\bar\gamma , \phi_2]+\frac12\bar\gamma\phi_1^2+
\frac12\phi_1^2\bar\gamma-\phi_1\bar\gamma \phi_1+[\bar\nabla , \phi_1 ],
~~etc.
\end{equation}

The following lemma confirms that every superconnection is locally isomorphic to a
complex of holomorphic vector bundles 
(\textit{cf.}, \cite[Lemma 4.5]{Block}).

\begin{lolemma} \label{local} 
Any flat $\dbar$-superconnection over a polydisc can be transformed by a strict gauge
transformation of the form (\ref{gauge}) to the form (\ref{expansion}) with all
${\bar \beta_i}$'s being zero. 
\end{lolemma}

The proof will easily follow from a technical lemma \ref{tlemma} below. An immediate consequence of the above Local Lemma is the following

\begin{corll}\label{loc-cohomology} 
    (i) Every  $\dbar$-superconnection on a polydisc is isomorphic to $\A_X\otimes_{\O_X}E^{\bu}$, where $E^{\bu}$ is a finite complex of free $\O_X$-modules, \\ 
    (ii) Cohomology sheaves of any $\dbar$-superconnection are coherent sheaves of $\O_X$-modules.
\end{corll}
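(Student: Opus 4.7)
The plan is to treat (i) as an immediate consequence of the Local Lemma \ref{local}, and then deduce (ii) from (i) by a standard bicomplex argument. For (i), after applying the strict gauge transformation supplied by Lemma \ref{local}, the differential of $M$ on the polydisc takes the form $\bar D = \bar\gamma + \bar\nabla$ with all $\bar\beta_i=0$. Substituting $\bar\beta_i=0$ in the Maurer--Cartan system \eqref{D2=0} collapses it to the three conditions $\bar\gamma^2=0$, $[\bar\gamma,\bar\nabla]=0$, and $\bar\nabla^2=0$. The relation $\bar\nabla^2=0$ says that each $\bar\nabla_j$ is a genuine flat $\dbar$-connection on the smooth bundle $\E^j$, so by Koszul--Malgrange (the linear version of Newlander--Nirenberg) $\E^j$ acquires a holomorphic structure $E^j$ with $\E^j = \A^0_X\otimes_{\O_X}E^j$. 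The commutation $[\bar\gamma,\bar\nabla]=0$ then forces $\bar\gamma$ to preserve horizontal sections and hence to restrict to an $\O_X$-linear differential $\bar\gamma\colon E^j\to E^{j+1}$; together with $\bar\gamma^2=0$ this exhibits $(E^{\bu},\bar\gamma)$ as a bounded complex of holomorphic vector bundles. Since a polydisc is Stein and contractible, each $E^j$ is trivial, hence free over $\O_X$, and the composed gauge isomorphism identifies $M$ on the polydisc with $\A_X\otimes_{\O_X}E^{\bu}$.

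For (ii), coherence is a local property, so one can work on a polydisc $U$ and use the presentation from (i). The plan is then to show that the natural inclusion $E^{\bu}\hookrightarrow \A_X\otimes_{\O_X}E^{\bu}$ induced by $\O_X\hookrightarrow\A^0_X$ is a quasi-isomorphism of complexes of sheaves. To see this, I would regard $\A_X\otimes_{\O_X}E^{\bu}$ as the total complex of the bicomplex $K^{p,q}=\A^{0,p}_X\otimes_{\O_X}E^q$ with horizontal differential $\dbar$ and vertical differential $\bar\gamma$. For each fixed $q$, since $E^q$ is a free $\O_X$-module, tensoring the Dolbeault resolution $\A^{0,\bu}_X$ of $\O_X$ with $E^q$ produces a fine resolution of $E^q$. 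The spectral sequence of the bicomplex then collapses at $E_2$ and yields a quasi-isomorphism $\mathrm{Tot}(K^{\bu,\bu})\simeq E^{\bu}$, so that the cohomology sheaves of $M|_U$ coincide with $\mathcal{H}^k(E^{\bu})$. The latter are coherent, being cohomology of a bounded complex of finitely generated free $\O_X$-modules.

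The principal difficulty of this corollary has been packed into the Local Lemma; what remains is largely bookkeeping. The most delicate point to watch in (i) is that the bidegree decomposition \eqref{expansion} makes $\bar\nabla$ strictly bundle-preserving (it alters only the form-degree), so that the single operator relation $\bar\nabla^2=0$ is equivalent to flatness of each $\bar\nabla_j$ individually; this is exactly what allows Koszul--Malgrange to be applied bundle by bundle, compatibly with the differential $\bar\gamma$. In (ii) the exactness of $\A^{0,\bu}_X\otimes_{\O_X}E^q$ as a resolution of $E^q$ is used essentially and relies on freeness of the $E^q$, which is precisely what part (i) delivers.
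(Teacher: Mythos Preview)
Your proposal is correct and follows essentially the same route as the paper's proof: apply the Local Lemma to kill the $\bar\beta_i$'s, read off the holomorphic structure from $\bar\nabla^2=0$ and $[\bar\gamma,\bar\nabla]=0$, and then use the Dolbeault resolution to identify the cohomology of $\A_X\otimes_{\O_X}E^{\bu}$ with that of $E^{\bu}$. You have simply made explicit the steps (Koszul--Malgrange, triviality of holomorphic bundles on a polydisc, the bicomplex spectral sequence) that the paper leaves implicit in its two-line argument.
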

\textit{Proof.} By Lemma \ref{local}, for any $\dbar$-superconnection $M$ we can find a strict gauge transformation that gives an operator $\bar{D}$ of the form (\ref{expansion}) with all ${\bar \beta_i}$'s being zero. This implies that $M$ is isomorphic to $\A_X\otimes_{\O_X}E^{\bu}\,$, where $\bar\gamma$ plays the role of the differential in $E^{\bu}$ and $\bar\nabla$ consists of  the $\dbar$-connections
on all $\A_X\otimes_{\O_X}E^i$'s.

The claim \textit{(ii)} is local, thus we can assume $M=\A_X\otimes_{\O_X}E^{\bu}\,$ as in \textit{(i)}. As a complex of $\O_X$-modules
it is quasiisomorphic to $E^{\bu}$, hence it has coherent cohomology.
$\square$

\begin{subsection}{\sc Relative superconnections on the product of polydiscs}
Let $U$, $W$ be complex manifolds and $\pi\!:U\to W$ a projection with smooth fibres. 
Denote by $\A_\pi^{\bu}=\oplus \A_\pi^i$ the sheaf of rings on $U$ of relative $(0,i)$-forms. 
$\A_\pi^{\bu}$ is a dg-algebra with the differential 
$\dbar_\pi$\,, which is given by the relative Dolbeault operator, $\dbar_\pi\!:\A_\pi^i\to\A_\pi^{i+1}$.

Let now $M^{\bu}$ be a dg-module over $\A_\pi^{\bu}$. 
We call $M^{\bu}$ a {\it relative} $\dbar$-{\it superconnection} if it is locally free over $\A_\pi^\sharp$.

Given $U, W, \pi$ be as above, consider a graded smooth vector bundle $\E^{\bu}$ on $U$ endowed 
with a relative $\dbar$-connection $\bar\nabla\!:\E^i\to\A_\pi^1\otimes\E^i$\,, where 
$\bar\nabla$ obeys the Leibniz $\dbar_\pi$-rule.
$\bar\nabla$ can be extended to $\A_\pi^\sharp\otimes\E^{\bu}$ by the graded Leibniz rule 
with respect to $\A_\pi^{\bu}$\,. If $\bar\nabla$ is flat,{\it i.e.} $\bar\nabla^2=0$, we get a relative 
 $\dbar$-superconnection of the form $(\A_\pi^\sharp\otimes\E^{\bu}\,,\,\bar\nabla)$. 
Let us call a superconnection of this particular shape an {\it ordinary} relative 
$\dbar$-connection.

\medskip
In Lemma \ref{tlemma} below, we shall need the following construction. 
If $U=V\times W$ and $\pi_V$, $\pi_W$ are two projections on the factors, 
then we have obvious isomorphisms of relative and absolute forms: 
$\A_{\pi_V}^{\bu}=\pi_W^*\A_W$, $\A_{\pi_W}^{\bu}=\pi_V^*\A_V$\,, and an isomorphism 
$\A_U^{\bu}=\A_V^{\bu}\boxtimes_{\A_U^0}\A_W^{\bu}$\,. 
This agrees with differentials, because the Dolbeault operator on $U$, 
$\dbar_U$\,, can be split into the sum $\dbar_U=\dbar_V+\dbar_W$ 
in an obvious sense. ($\dbar_V$ acts ``along $V$'', that is increases by 1 only 
the grading in $\A_{\pi_W}^{\bu}$; similarly, $\dbar_W$ acts ``along $W$''.)

Let $\E^{\bu}$ be a graded smooth vector bundle over $U$. Suppose we have two 
relative $\dbar$-superconnections of the form 
$(\A_{\pi_W}^\sharp\otimes\E^{\bu}\,,\,\bard_V)$ and 
$(\A_{\pi_V}^\sharp\otimes\E^{\bu}\,,\,\bard_W)$, where $\bard_V$ and $\bard_W$ obey 
the Leibniz rules with respect to the corresponding projections. 
Given this, we can extend $\bard_V$ from $\A_{\pi_W}^{\sharp}\otimes\E^{\bu}$ to 
$\A_U^{\sharp}\otimes\E^{\bu}$ by the Leibniz $\dbar_V$-rule, and similarly for $\bard_W$\,, and construct the  operator $\bard=\bard_V+\bard_W$ on $\A_U^\sharp\otimes\E^{\bu}$
which satisfies the Leibnitz $\dbar_U$-rule.  
If additionally $[\bard_V\,,\,\bard_W]=0$, then we get a $\dbar$-superconnection 
$(\A_U^\sharp\otimes\E^{\bu}\,,\,\bard)$ on $U$.

Now consider an $n$-dimensional polydisc $U$
which is presented as the product $U=V\times W$, where $V$ has 
coordinates $z^1,\ldots,z^m$, $m>1$, and $W$ coordinates $z^{m+1},\ldots,z^n$. 
Let $V'$ be the polydisc with $m-1$ coordinates, $z^1,\ldots,z^{m-1}$.
Thus, $V$ is a product $V=V'\times V_1$\,, where $V_1$ is 1-dimensional polydisc with coordinate $z^m$ as a coordinate. 
Let $W'$ be the polydisc $W'=V_1\times W$.
We have a new decomposition into the product:
$U=V'\times W'$.

\begin{lemma}\label{tlemma}
Let $M$ be a $\dbar$-superconnection over the polydisc $U=V\times W$ as above, $\dim V > 1$.
Suppose the differential in $M$ is of the form
$$
    \bar D=\bar D_V+\bar\nabla_W\,,
$$
where $\bar\nabla_W$ is an ordinary relative $\dbar$-connection along $W$, 
and $\bard_V$ a relative $\dbar$-superconnection along $V$. 
Then there exists a strict gauge transformation $\bar D'=e^{-\phi}\bar De^{\phi}$
such that 
$$
    \bard'=\bard_{V'}+\bar\nabla_{W'} \,,
$$
where 
$\bar\nabla_{W'}$ is an ordinary relative  $\dbar$-connection along $W'$, and  
$\bard_{V'}$ a relative  $\dbar$-superconnection along $V'$.
\end{lemma}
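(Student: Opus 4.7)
The plan is to construct the desired strict gauge $\phi$ by decomposing $\bar D$ along the $V_1$-form degree and iteratively killing the $V'\!\cdot V_1$ mixed components of $\bar D_V$. First, I would write $\bar D = D_0 + d\bar z^m\wedge B$, where $D_0$ carries no $d\bar z^m$-factor and $B\colon \E^\bu\to\A_{V'}^\sharp\otimes\E^\bu$ is $\A^0$-linear of total degree $0$. Because $\bar\nabla_W$ contains no $d\bar z^m$, the whole $V_1$-degree-$1$ part of $\bar D$ is inherited from $\bar D_V$, so $B$ has no $W$-form components. Splitting $B = B_0 + B_+$ into its $V'$-form-degree-$0$ (endomorphism) piece and the rest, the Maurer-Cartan equation $\bar D^2=0$ decomposes via $(d\bar z^m)^2=0$ into $D_0^2=0$ and $\{D_0,B\}=0$.

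I would then restrict to gauges of the special shape $\phi = d\bar z^m\wedge\psi$, with $\psi\colon \E^\bu\to\A_{V'}^\sharp\otimes\E^\bu$ of total degree $-1$, additionally chosen to be $\bar\nabla_W$-horizontal (possible since $\bar\nabla_W$ is ordinary and $W$ is a polydisc). A short computation using $(d\bar z^m)^2=0$ shows $\phi^2=0$, hence $e^\phi = 1+\phi$ and the transformation is \emph{exact}:
\begin{equation*}
    \bar D' \;=\; \bar D \;-\; d\bar z^m\wedge\{D_0,\psi\}\,.
\end{equation*}
The horizontality of $\psi$ ensures $\{\bar\nabla_W,\psi\}=0$, so no spurious $W$-form pieces are generated; only the $d\bar z^m$-component is modified, as $B\mapsto B-\{D_0,\psi\}$.

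The bulk of the work, and the main obstacle, lies in solving $\{D_0,\psi\}\equiv B_+\pmod{\E\text{-endomorphisms}}$. Expanding $\psi = \sum_{k\ge 0}\psi_k$ by $V'$-form degree produces a coupled recursion whose order-$k$ equation has leading term $\{\bar\gamma,\psi_k\}$ matched against $B_k$ modulo lower-order corrections involving $\bar\nabla_{V'}$, the higher $V'$-form components of $D_0$, and $\psi_{<k}$. The difficulty is verifying solvability at each step. The key input is that $\{D_0,B\}=0$, decomposed by bidegree, supplies precisely the cocycle identities needed so that the right-hand side lies in the image of $\{\bar\gamma,-\}$ up to terms absorbable into later stages. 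Combined with the Dolbeault lemma applied on the polydisc (in the $V_1$- and $V'$-directions as needed), this allows one to construct $\psi_k$ order by order; the bounded $\E$-grading makes the iteration terminate.

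Finally, I would verify that the resulting $\bar D'$ has the asserted shape $\bar D_{V'} + \bar\nabla_{W'}$. By construction the new $d\bar z^m$-component is $d\bar z^m\wedge B_0'$ for some endomorphism $B_0'$, and the remainder assembles into a relative $\dbar$-superconnection $\bar D_{V'}$ along $V'$. Setting $\bar\nabla_{W'}:=d\bar z^m\wedge B_0'+\bar\nabla_W$, the preserved flatness $\bar D'^2=0$ together with bidegree bookkeeping yields $[B_0',\bar\nabla_W]=0$, hence $\bar\nabla_{W'}^2=0$, so $\bar\nabla_{W'}$ is an ordinary relative $\dbar$-connection along $W'=V_1\times W$, as required.
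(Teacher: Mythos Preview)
Your gauge ansatz is the mirror image of the paper's, and that is where the trouble lies. You take $\phi=d\bar z^m\wedge\psi$, so that the transformation changes only the $d\bar z^m$-component $B$ by $B\mapsto B-\{D_0,\psi\}$, and then try to kill $B_+$ by an induction whose leading term is $\{\bar\gamma,\psi_k\}$. But $\bar\gamma$ has no acyclicity in general: for an arbitrary bounded complex $(\E^{\bu},\bar\gamma)$ the groups $H^{-k}\bigl(\mathcal{E}nd^{\bu}_{\A^0}\E^{\bu},\,[\bar\gamma,-]\bigr)$ need not vanish for $k\ge 1$. The relation $[D_0,B]=0$ tells you the right-hand side at each stage is $\bar\gamma$-closed modulo lower terms, not that it is $\bar\gamma$-exact; your appeal to ``cocycle identities'' does not bridge that gap. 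Invoking the Dolbeault lemma in the $V_1$-direction cannot help either, because you have already stripped $d\bar z^m$ out of both $\phi$ and the equation you are solving. (There is also a small slip: $B$ is not $\A^0$-linear as an operator on $M$---it carries the first-order piece $\partial_{\bar z^m}$ coming from $\bar\nabla_{V_1}$---so in particular $d\bar z^m\wedge B_0'$ is not by itself an ordinary connection along $V_1$.)

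The paper makes the opposite choice: the strict gauge parameter $\phi=\sum_k\phi_k$ is taken with $\phi_k=(\phi_k)_{k,0}$, i.e.\ containing \emph{no} $d\bar z^m$. Under such a gauge $\bar\nabla_{0,1}=\bar\nabla_{V_1}$ is unchanged, and the equation that kills $(\bar\beta_{k+1})_{k,1}$ reads
\[
(\bar\beta_{k+1})_{k,1}+[\bar\nabla_{0,1},(\phi_k)_{k,0}]=0.
\]
Here the key operator is $\bar\nabla_{0,1}$, a $\dbar$-connection along the one-dimensional disc $V_1$; it is automatically flat and acyclic in positive degree by the one-variable Dolbeault lemma, so each step is solvable outright, and the solutions can be chosen $\bar\nabla_W$-horizontal. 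This is why the induction goes through without any hypothesis on $\bar\gamma$. If you want to repair your argument, switch the role of the two form-types in your gauge: take $\phi$ free of $d\bar z^m$ and let $\bar\nabla_{V_1}$, not $\bar\gamma$, be the operator you invert.
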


\noindent
\textit{Proof.}~
Since $\bar D=\bar D_V+\bar\nabla_W$\,, it can be written (\textit{cf.}, eq.\,\eqref{expansion}) as 
$$
    \bar D=\bar \gamma+\bar \nabla+\bar\nabla_W+\sum_{i=2}^m\bar \beta_i\,,
$$
where $\bar\gamma\,,~\bar\nabla\,,~\bar\beta_i$'s are operators of degree 1 on sections of 
$M^\sharp$ defined by morphisms 
$$
    \bar\gamma\!:\E^{j}\to\E^{j+1} \,,
$$
$$
    \barb_i\!:\E^{j}\to\pi_V^*\A_V^i\otimes\E^{j-i+1},
$$
and $\bar\nabla$ is a first order differential operator mapping $\E^{j}$ to 
$\pi_V^*\A_V^1\otimes\E^j$\,.

All of these, $\bar\gamma\,,~\bar\nabla\,,~\bar\beta_i$'s, (anti)commute with $\bar\nabla_W$\,, because 
$[\bard_V\,,\bar\nabla_W]=0$.
We shall say, they depend holomorphically on $W$.

Let us now consider $V$ as a product $V=V'\times V_1$\,. 
Any $(0,i)$-form on $V$ 
can be decomposed as
$$
\omega=\omega_{i,0}+\omega_{i-1,1}
$$
where 
$$
\omega_{i-1,1}=\eta _{i-1,0}\d\bar z^m,
$$
and $\omega_{i,0}$ and $\eta_{i-1,0}$ are Dolbeaut forms on $V'$ of degree $i$ and $i-1$ respectively.

Analogously, we have the decomposition of 
$\bar D_V=\bar\gamma+\bar\nabla+\sum_{i=2}^m\bar\beta_i$\,. 
In particular,
$$
    \bar\nabla=\bar\nabla_{1,0} + \bar\nabla_{0,1}\,,
$$
$$
    \barb_k=(\barb_k)_{k,0}+(\barb_k)_{k-1,1} \,.
$$
Note that, since $V_1$ is one-dimensional, any $\dbar$-connection along $V_1$ is 
automatically flat, {\it i.e.} $(\bar\nabla_{0,1})^2=0$. Moreover, the differential $\bar\nabla_{0,1}$ 
has trivial (0,1)-cohomology on a disc (it is essentially the same as the Dolbeault operator).

Let us now prove that there exists a strict gauge transformation $\bar D'_V=e^{-\phi}\bar D_Ve^{\phi}$\,, 
where $\phi=\phi_1+\phi_2+\ldots$\,, such that $\phi_i=(\phi_i)_{i,0}$ ($\phi_i$ contains no differential 
$d\bar z^m$, that is $(\phi_i)_{i-1,1}=0$) and such that, for 
$\bard'_V=\bar\gamma+\bar\nabla'+\sum\barb'_i$\,, we have that 
$(\barb'_i)_{i-1,1}=0$ for $i\geqslant 2$. Moreover, one can require that $\phi_i$'s commute with 
$\bar\nabla_W$ (that is to say, holomorphically depend on $W$).

From the transformation law of $\bard$ (\textit{cf.}, eq.\,\eqref{trlaw}) we get an equation
$$
    (\barb'_2)_{1,1}=(\barb_2)_{1,1}+[\bar\nabla_{0,1}\,,\,(\phi_1)_{1,0}]=0 \,.
$$
As we have mentioned, $\bar\nabla_{0,1}$ is acyclic in positive degree, hence, the above equation 
has a solution $\phi_1=(\phi_1)_{1,0}$\,. Note, that the conditions 
$\phi_0=0$ and $\phi_i=(\phi_i)_{i,0}$ 
guaranty that $\bar\nabla'_{0,1}=\bar\nabla_{0,1}$.

Let us use this result as the base of induction in $k$ and suppose 
$(\barb_s)_{s-1,1}=0$ for $s\leqslant k$. Then, the equation of vanishing of 
$(\barb'_{k+1})_{k,1}$ reads as
\begin{equation}\label{bk}
    (\barb_{k+1})_{k,1}+[\bar\nabla_{0,1}\,,\,(\phi_k)_{k,0}]=0 \,.
\end{equation}
This again has a solution. Moreover, since $\barb_k$'s depend holomorphically on $W$ 
and so is $\bar\nabla_{0,1}$\,, solutions $\phi_k$ to eq.\,\eqref{bk} can also be chosen 
to commute with $\bar\nabla_W$\,.

Thus, we can kill the $(\barb_i)_{i-1,1}$ for all $i=2,\ldots,m$\,. 
At the end, we get 
$$
    \bard'_V=e^{-\phi}\bard_V e^\phi=\bard'_{1,0}+\bar\nabla_{0,1} \,,
$$
where $(\bard'_{1,0})^2=0$, $(\bar\nabla_{0,1})^2=0$, and 
$[\bard'_{1,0}\,,\,\bar\nabla_{0,1}]=0$. 
Since $\phi$ commutes with $\bar\nabla_W$\,, we also have that
$$
    \bard'=e^{-\phi}(\bard_V+\bar\nabla_W) e^\phi=\bard'_{1,0}+\bar\nabla_{0,1}+\bar\nabla_W \,.
$$
Let us rewrite this as 
$$
    \bard'=\bard_{V'}+\bar\nabla_{W'} \,,
$$
where $\bard_{V'}=\bard'_{1,0}$ and $\bar\nabla_{W'}=\bar\nabla_{0,1}+\bar\nabla_W$\,, 
which yields the result.
$\square$

\medskip
\noindent
\textit{Proof of Lemma \ref{local}}~
It follows by use of Lemma \ref{tlemma} and induction in decreasing $m$.
$\square$

\end{subsection}

\subsection{\sc Full Faithfulness}
Let $M$ and $N$ be $\dbar$-superconnections. We regard them as complexes of $\O_X$-modules
and consider the complex of derived local homomorphisms $\RHom_{\O_X} (M, N)$, which
is the object in $D(\O_X\mbox{-mod})$ too.

Fix a complex $I(N)$ of injective $\O_X$-modules together with a
quasi-isomorphism $N\to I(N)$. It induces a morphism of complexes
$$
\mu :{\mathcal Hom}^{\bu}_{\O_X}(M, N)\to {\mathcal
Hom}^{\bu}_{\O_X}(M, I(N)).
$$
Consider the composition $\phi$ of the natural map
$ {\mathcal Hom}^{\bu}_{\A}(M, N)\to {\mathcal Hom}^{\bu}_{\O_X}(M, N)$ with
$\mu$. Since injective sheaves are acyclic with respect to the functor
${\mathcal Hom}(\U , -)$, for every $\O_X$-module $\U$ (cf.\ \cite[Corollary 2.4.2]{KS})
we have a functorial isomorphism in $\D(\O_X\mbox{-mod})$:
\begin{equation}\label{rhom}
\RHom_{\O_X} (M,N)\simeq {\mathcal Hom}^{\bu}_{\O_X}(M, I(N)).
\end{equation}
Hence we have a commutative diagram:

\begin{center}
\parbox{50mm}{
\begin{picture}(200,60)
\put(0,0){${\mathcal Hom}^{\bu}_{\A}(M,N)$}
\put(55,15){\vector(1,1){20}} \put(60,40){${\mathcal
Hom}^{\bu}_{\O_X}(M,N)$} \put(110,35){\vector(1,-1){20}}
\put(120,0){$\RHom_{\O_X}(M,N)$} \put(75,5){\vector(1,0){40}}
\put(90,10){$\phi$}\put(125,26){$\mu$}
\end{picture}}
\end{center}

\begin{lemma} \label{rhomiso}

$\phi$ induces a quasi-isomorphism ${\mathcal Hom}_{\A}^{\bu}(M,
N)\simeq \RHom_{\O_X} (M, N)$.

\end{lemma}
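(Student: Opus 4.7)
The plan is to verify the claim locally and use the Local Lemma to reduce to the standard form $M=\A_X\otimes_{\O_X}E^{\bu}$, and then to identify both sides of $\phi$ with ${\mathcal Hom}^{\bu}_{\O_X}(E^{\bu},N)$. Since being a quasi-isomorphism of complexes of sheaves is a stalk-wise property, it suffices to work on a sufficiently small polydisc $U\ni x$ at each point $x\in X$. By Corollary \ref{loc-cohomology}, on such a $U$ we may assume $M=\A_X\otimes_{\O_X}E^{\bu}$, where $E^{\bu}$ is a finite complex of free $\O_X$-modules and the differential is $\bar D_M=(\id\otimes d_E)+(\dbar\otimes\id)$.

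On the source of $\phi$, the $\A$-$\O_X$ tensor-Hom adjunction yields a canonical isomorphism
$$
{\mathcal Hom}^{\bu}_{\A}(\A_X\otimes_{\O_X}E^{\bu},\,N)\;\toiso\;{\mathcal Hom}^{\bu}_{\O_X}(E^{\bu},\,N),
$$
realized by restriction along the inclusion $E^{\bu}\hookrightarrow \A_X\otimes_{\O_X}E^{\bu}$; the differential on the left induced by $\bar D_M$ and $\bar D_N$ matches the standard Hom-complex differential on the right, because $\bar D_M|_{E^{\bu}}=d_E$. On the target, the same inclusion $E^{\bu}\hookrightarrow M$ is a quasi-isomorphism of complexes of $\O_X$-modules: $(\A^{0,\bu}_X,\dbar)$ is a fine resolution of $\O_X$, tensoring with the locally free $E^i$'s preserves this, and the spectral sequence of the resulting double complex collapses to $E^{\bu}$. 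Hence
$$
\RHom_{\O_X}(M,N)\;\simeq\;\RHom_{\O_X}(E^{\bu},N)\;\simeq\;{\mathcal Hom}^{\bu}_{\O_X}(E^{\bu},N),
$$
the second quasi-isomorphism using that $E^{\bu}$ is a bounded complex of locally free $\O_X$-modules of finite rank, so ${\mathcal Hom}_{\O_X}(E^i,-)$ is exact. A diagram chase shows that $\phi$ corresponds, under these two identifications, to the identity on ${\mathcal Hom}^{\bu}_{\O_X}(E^{\bu},N)$, which proves the lemma locally and hence globally.

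The most delicate step will be the bookkeeping: checking the compatibility of the various differentials and signs under the adjunction, and verifying that $\phi$, defined as the composition through ${\mathcal Hom}^{\bu}_{\O_X}(M,N)$ and the injective resolution of $N$, really does coincide with the restriction-extension map through $E^{\bu}$ under the two identifications. Since every arrow in the chain is natural, the verification is essentially diagrammatic, but it should be carried out carefully in the DG setting.
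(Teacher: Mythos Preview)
Your proof is correct and opens the same way as the paper's---localize, then use Corollary~\ref{loc-cohomology} to put $M$ in the form $\A\otimes_{\O_X}E^{\bu}$---but from there you take a different route. The paper also trivializes $N$ as $\A\otimes_{\O_X}E_2^{\bu}$, rewrites ${\mathcal Hom}^{\bu}_{\A}(M,N)$ as the Dolbeault complex $\A\otimes_{\O_X}{\mathcal Hom}_{\O_X}(E_1^{\bu},E_2^{\bu})$, checks the single-term case by hand, and then runs an induction on the lengths of $E_1^{\bu}$ and $E_2^{\bu}$ using short exact sequences and the five-lemma. You instead leave $N$ alone and invoke the extension-of-scalars adjunction ${\mathcal Hom}^{\bu}_{\A}(\A\otimes_{\O_X}E^{\bu},N)\simeq{\mathcal Hom}^{\bu}_{\O_X}(E^{\bu},N)$ in one stroke, then use local freeness of $E^{\bu}$ to compute $\RHom_{\O_X}$ on the other side. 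Your argument is shorter and more conceptual; the paper's is more hands-on but makes the Dolbeault picture explicit.

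One small wording point: under your two identifications, $\phi$ does not become the identity map of complexes on the nose. Unwinding, it becomes the map ${\mathcal Hom}^{\bu}_{\O_X}(E^{\bu},N)\to{\mathcal Hom}^{\bu}_{\O_X}(E^{\bu},I(N))$ induced by $N\to I(N)$, which is a quasi-isomorphism because each $E^i$ is locally free. So the conclusion is unaffected, but ``corresponds to the identity'' should be read in the derived category rather than at the level of complexes.
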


\noindent
\textit{Proof.}~
We need to show that $\phi$ induces an isomorphism of
cohomology sheaves. This is a local statement. Hence, we can assume that $X$ is a polydisc. 
According to Corollary \ref{loc-cohomology} we can replace $M$ and $N$ by Dolbeault
bicomplexes of finite complexes $E_1^{\bu}$ and $E_2^{\bu}$
of free $\O_X$-modules:
$$
M=\A\otimes_{\O_X} E_1^{\bu},\ \ N=\A\otimes_{\O_X} E_2^{\bu}.
$$

Then the complex ${\mathcal Hom}_{\A}^{\bu}(M, N)$ is isomorphic
to Dolbeault complex of the complex of sheaves of local homomorphisms
$\A\otimes_{\O_X} {\mathcal Hom}_{\O_X}(E_1^{\bu}, E_2^{\bu})$.

If $E_1^{\bu}$ and $E_2^{\bu}$ both consist of single
locally free sheaves $E_1$ and $E_2$, then this complex is
obviously quasi-isomorphic to ${\mathcal Hom}_{\O_X}(E_1, E_2)$.
On the other hand, $M$ and $N$ are quasi-isomorphic to, respectively, $E_1$ and
$E_2$, hence $\RHom_{\O_X} (M, N)={\mathcal
Hom}_{\O_X}(E_1, E_2)$, i.e. the statement of the lemma is clear
for this case.

We shall use now the induction on the length of the complexes $E_1^{\bu}$ and
$E_2^{\bu}$. If one of them, say $E_1^{\bu}$, has length greater than one, then
we decompose it into a short exact sequences of complexes
$$
0\to E_1^{\prime \bu}\to E_1^{\bu}\to E_1^{\prime \prime \bu}\to 0
$$
where $E_1^{\prime \bu}$ and $E_1^{\prime \prime \bu}$ are 
complexes of free sheaves of shorter length. Since $\A$ is flat over 
$\O_X$ by Theorem \ref{flat-over-o} in the Appendix, functor $\A\otimes_{\O_X} (-)$ 
is exact on the derived categories. Let 
\begin{equation}\label{MMM}
M^{\prime}\to M\to M^{\prime \prime}
\end{equation}
be the decomposition of the object $M$ induced by the above exact sequence.

By induction, we know that $\phi$ induces quasi-isomorphisms:
$$
{\mathcal Hom}_{\A}^{\bu}(M^{\prime}, N)\simeq \RHom_{\O_X} (M^{\prime}, N),
$$
$$
{\mathcal Hom}_{\A}^{\bu}(M^{\prime \prime}, N)\simeq \RHom_{\O_X} (M^{\prime \prime}, N)
$$
Cohomology sheaves of ${\mathcal Hom}_{\A}^{\bu}(M, N)$ and $\RHom_{\O_X} (M, N)$
fit into two long exact sequences, obtained by applying the functors ${\mathcal
Hom}_{\A}^{\bu}(-, N)$ and $\RHom_{\O_X} (-, N)$ to the triangle (\ref{MMM}), and
$\phi$ gives a morphism of these long sequences. The quasi-isomorphism for $M$
follows from the lemma on 5 homomorphisms applied to this diagram. 
$\square$

\begin{prop}\label{fulfai}
The functor $\Phi :\Ho(\C_X)\to\D^b(\O_X\mbox{-mod})$ is fully faithful.
\end{prop}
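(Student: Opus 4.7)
The goal is to show that for any $M, N \in \C_X$ the canonical map
$$
\Hom_{\Ho(\C_X)}(M, N) \;\longrightarrow\; \Hom_{\D^b(\O_X\mbox{-mod})}(\Phi(M), \Phi(N))
$$
is a bijection. In fact I will prove the stronger statement that for every $i \in \ZZ$ there is a natural isomorphism
$$
H^i \RR\Hom^{\bu}_{\C_X}(M, N) \;\cong\; \mathrm{Ext}^i_{\O_X}(M, N),
$$
of which the case $i=0$ gives the full faithfulness of $\Phi$. By definition the left-hand side equals $H^i \Gamma(X, {\mathcal Hom}^{\bu}_{\A}(M, N))$, while the right-hand side equals $H^i \RR\Gamma(X, \RHom_{\O_X}(M, N))$.

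The main step is to apply $\RR\Gamma(X,-)$ to the sheaf-level quasi-isomorphism
$$
\phi \colon {\mathcal Hom}^{\bu}_{\A}(M, N) \;\toiso\; \RHom_{\O_X}(M, N)
$$
supplied by Lemma \ref{rhomiso}. It then remains to verify that $\RR\Gamma$ on the left is computed naively, i.e.\ that each term ${\mathcal Hom}^p_{\A}(M, N)$ is acyclic for global sections. For this, note that ${\mathcal Hom}^p_{\A}(M, N)$ carries a natural action of $\A^0_X$, the sheaf of $C^{\infty}$ functions on $X$; since $X$ is paracompact and $\A^0_X$ admits smooth partitions of unity, any sheaf of $\A^0_X$-modules is fine, hence soft, hence $\Gamma(X,-)$-acyclic. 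Consequently
$$
\RR\Gamma(X, {\mathcal Hom}^{\bu}_{\A}(M, N)) \;\simeq\; \Gamma(X, {\mathcal Hom}^{\bu}_{\A}(M, N)) \;=\; \RR\Hom^{\bu}_{\C_X}(M, N),
$$
and chasing cohomology through the resulting chain of isomorphisms yields the desired identification with $\mathrm{Ext}^i_{\O_X}(M, N)$.

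The only point that needs care is that the map $\phi$ from Lemma \ref{rhomiso} is realized at the level of complexes of sheaves (not only as an abstract arrow in $\D(\O_X\mbox{-mod})$), so that applying $\RR\Gamma$ term-by-term is legitimate; but this is precisely its construction, via the intermediate complex ${\mathcal Hom}^{\bu}_{\O_X}(M, I(N))$ with $I(N)$ a bounded-below complex of injectives. I do not foresee a substantive obstacle beyond this bookkeeping, the argument being the standard observation that Dolbeault-type resolutions with smooth coefficients compute the correct sheaf cohomology.
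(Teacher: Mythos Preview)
Your proposal is correct and follows essentially the same approach as the paper: apply Lemma \ref{rhomiso} to identify $\RHom_{\O_X}(M,N)$ with ${\mathcal Hom}^{\bu}_{\A}(M,N)$, then use that the terms of the latter are fine sheaves (as $\A^0_X$-modules) so that $\RR\Gamma = \Gamma$ on this complex. The paper phrases the last step via the degeneration of the hypercohomology spectral sequence rather than in derived-functor language, but the content is identical.
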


\noindent
\textit{Proof.}~
By the standard property of local $\RHom$ applied to two $\dbar$-superconnections $M$ and $N$, 
one can recover the global homomorphisms by the formula:
$$
\Hom_{\D(\O_X\mbox{-}\mathrm{mod})}(M, N)={\HH}^0 (X, \RHom_{\O_X} (M, N)),
$$
where $\HH^i$ stands for the hypercohomology of a complex of $\O_X$-modules. In view
of Lemma \ref{rhomiso}, we can replace the right hand side in this equality and get:
$$
\Hom_{\D(\O_X\mbox{-}\mathrm{mod})}(M,N)={\HH}^0 (X,{\mathcal
Hom}^{\bu}_{\A}(M, N)).
$$
There is a standard spectral sequence converging to the hypercohomology:
$$
{\rm H}^i(X,{\mathcal Hom}^{j}_{\A}(M, N))\to
{\HH}^{i+j}({\mathcal Hom}^{\bu}_{\A}(M, N))
$$

Since sheaves ${\mathcal Hom}^{j}_{\A}(M, N)$ are fine, cohomology
${\rm H} ^i(X,{\mathcal Hom}^{j}_{\A}(M, N))$ are trivial for $i\ge 0$. The spectral
sequence degenerates, and yields the equality of $\Hom_{\D(\O_X\mbox{-}\mathrm{mod})}(M, N)$ 
with the 0-th homology of the complex of global sections 
$\Gamma (X, {\mathcal Hom}^{\bu}_{\A}(M,N))$, which is exactly ${\Hom}_{\Ho \C}(M,N)$.
$\square$

\subsection{\sc Every Coherent Sheaf is Quasiisomorphic 
to \\
a $\dbar$-superconnection}

In this Subsection we construct a $\dbar$-superconnection that is 
quasi-isomorphic to a given coherent sheaf.

Let $\Ao$ be the sheaf of complex-valued real-analytic functions on $X$ 
regarded as a real-analytic manifold.
Since the sheaf $\Ao$ is identified with the restriction to the diagonal
$X\subset X\times{\bar X}$ of the sheaf of holomorphic functions $\O_{X\times\bar X}$\,, 
which is coherent, $\Ao$ is also coherent.

Let $F$ be a coherent sheaf of $\O_X$-modules, \textit{i.e.}\ it is 
finitely presented. Then, $\Fo=\Ao\otimes_{\O_X}F$ is also finitely presented 
($\otimes$ is right-exact). Hence, $\Fo$ is coherent as an $\Ao$-module, because $\Ao$ is coherent. 

\begin{lemma}\label{lem-resol}
For a coherent sheaf of $\O_X$-modules $F$, 
the sheaf of $\A^0$-modules $\F =\A^0\otimes_{\O_X}F$ has a locally free
resolution $\E^{\bu}$ over $\A^0$:
\begin{equation}\label{resol1}
0\to \E^{-n}\to \dots \to \E^{0}\to \F\to 0
\end{equation}
\end{lemma}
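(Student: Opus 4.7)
The plan is to build the resolution inductively, peeling off one layer at a time, relying on two inputs: local existence of finite free resolutions of $\F$ over $\A^0$ (coming from regularity of the stalks $\O_{X,x}$ together with flatness of $\A^0$ over $\O_X$, Theorem \ref{flat-over-o}), and compactness of $X$ combined with the fine-sheaf structure of $\A^0$. For the local input: every $x\in X$ has a neighborhood $U$ over which $\F|_U$ admits a finite free resolution of length $\le n=\dim_{\CC}X$ by $\A^0|_U$-modules, because $F_x$ is finitely generated over the regular local ring $\O_{X,x}$ of dimension $n$, hence has a finite free resolution of length $\le n$ that extends to $U$ by coherence of $F$ and then tensors along the flat inclusion $\O_X\hookrightarrow\A^0$ to a resolution of $\F|_U$.

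I would now build the global resolution step by step. First, produce a surjection $\pi_0\colon \E^0\twoheadrightarrow\F$ with $\E^0$ free of finite rank: cover $X$ by finitely many open sets $U_i$ (possible by compactness and the local step) on which $\F|_{U_i}$ admits finite generators $s_j^i$, pick a smooth partition of unity $\{\rho_i\}$ subordinate to $\{U_i\}$, and check that the sections $\rho_i s_j^i$ (extended by zero) generate every stalk $\F_x$, since at any such point some $\rho_i(x)\ne 0$ is a unit in $\A^0_x$ and the $\rho_i s_j^i$ for that fixed $i$ already generate $\F_x$. Setting $\mathcal{G}^0:=\ker\pi_0$, a Schanuel-style comparison of $\pi_0|_U$ with the local free resolution of $\F|_U$ shows that $\mathcal{G}^0$ is again locally finitely generated over $\A^0$ and has local projective dimension at most $n-1$, so the same construction applies to produce $\pi_1\colon \E^{-1}\twoheadrightarrow\mathcal{G}^0$. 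Iterating $n$ times, the kernel $\mathcal{G}^{n-1}$ has local projective dimension zero; each stalk $\mathcal{G}^{n-1}_x$ is a finitely generated projective $\A^0_x$-module, hence free, because $\A^0_x$ is a local ring (germs vanishing at $x$ form the maximal ideal). Combined with the local finite presentation inherited from the construction, $\mathcal{G}^{n-1}$ is locally free of finite rank, and taking $\E^{-n}:=\mathcal{G}^{n-1}$ closes the resolution.

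The main technical hurdle is propagating the inductive bookkeeping across the iteration: at each step one must verify that the next kernel is locally finitely generated with strictly smaller local projective dimension over $\A^0$. This is what the Schanuel-style argument delivers. Given the local finite free resolution $0\to P^{-m}\to\dots\to P^0\to\F|_U\to 0$ of length $m\le n$ and the restriction $\E^0|_U\twoheadrightarrow\F|_U$ of the global surjection, Schanuel's lemma gives $\mathcal{G}^0|_U\oplus P^0\cong \E^0|_U\oplus\ker(P^0\to\F|_U)$; the right-hand side is finitely generated with a local finite free resolution of length $m-1$, so $\mathcal{G}^0$ inherits these properties stably, and the same recursion operates at each subsequent kernel until the resolution terminates in a locally free sheaf.
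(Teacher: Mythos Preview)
Your argument is correct and essentially reproduces the second, ``more explicit'' construction that the paper offers in the very same proof; your explicit invocation of Schanuel's lemma is precisely what justifies the paper's bare assertion that the kernel of a surjection from a free sheaf onto a sheaf admitting local finite free resolutions again admits such resolutions. (A small streamlining: since every $\A^0$-module sheaf is fine, $\Gamma$ is exact on short exact sequences of $\A^0$-modules, so finite-rank free $\A^0$-sheaves are genuinely projective objects and Schanuel already applies at the sheaf level over each $U_i$; after $n$ iterations $\mathcal{G}^{n-1}|_{U_i}$ is a direct summand of a free sheaf, giving local freeness without the detour through stalks and finite presentation.)

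The paper's \emph{primary} proof, however, takes a genuinely different route. It passes through the sheaf $\Ao$ of real-analytic functions: $\Fo=\Ao\otimes_{\O_X}F$ is a coherent $\Ao$-module, and Grauert's Stein-neighborhood theorem combined with Cartan's Theorems A and B (as packaged by Atiyah--Hirzebruch) yields a global finite locally free $\Ao$-resolution of $\Fo$ on the compact $X$; one then tensors up along Malgrange's flat extension $\Ao\hookrightarrow\A^0$. That approach buries all the inductive bookkeeping inside deep Stein theory, whereas yours is more elementary and self-contained, needing only the regularity of $\O_{X,x}$, the flatness of $\A^0$ over $\O_X$, and partitions of unity.
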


\noindent
\textit{Proof.}~
As it is explained by Atiyah and Hirzebruch in \cite{AH}, the famous
Grauert's result \cite{G} on the existence of a fundamental system of Stein neighborhoods of any 
real-analytic manifold $X$ in its complexification, together with theorems A and B of
Cartan, easily implies, on a compact $X$, the existence of a finite locally free resolution for an
arbitrary coherent $\Ao$-module. Fix such a resolution for $\Fo$:

\begin{equation}\label{resolution}
0\to \Eo^{-n}\to \dots \to \Eo^{0}\to \Fo\to 0
\end{equation}

By a result of Malgrange \cite{Malgrange} $\A^0$ is a flat ring over $\Ao$. Hence, 
by taking tensor product with $\A^0$ over $\Ao$\,, we obtain an
$\A^0$-resolution $\E^{\bu}=\A^0\otimes_{\Ao}\Eo^{\bu}$ for $\F$ of the form (\ref{resol1}).

For reader's convenience, we will give another, somewhat more explicit, construction
for the resolution of the form (\ref{resol1}), which, however, still relies on \cite{Malgrange}.
In a sufficiently small neighborhood $U$ of any point $x\in X$,
the sheaf $\F |_U$ has a finite resolution by finitely generated free $\A^0_U$-modules.
Indeed, the sheaf $F|_U$ has a finite resolution by free $\O_U$-modules.
The sheaf $\A^0_U$ is flat over 
$\O_U$ by proposition \ref{flatness} (see Appendix). Thus, we can take the tensor product 
of the resolution with $\A^0_U$ and get the required local
resolution. 

On a compact manifold, 
every $\A^0$-module which is locally generated 
by a finite number of sections is also globally generated by a finite number of sections,
because $\A ^0$ is a fine sheaf. This is clearly applicable to sheaves which locally
have free resolutions. Now if we have an epimorphism between two sheaves which have local
resolutions, then the kernel is a sheaf which also has such a resolution. Apply this
to the epimorphism $(\A^0)^s\to \F$, which exists because $\F$ is generated by a
finite number of global sections. We get that the kernel has also free
resolutions locally on $X$ and is generated over $\A^0$ by finite number of sections. Hence we can
iterate the process and construct the resolution until the kernel becomes locally
free. This must happen after a finite number of iterations, because the manifold,
being compact, is covered by a finite number of open sets on which a finite free
resolution exists. Just take the resolution of the length equal to the maximum of
lengths of these free resolutions on this finite number of open sets.
$\square$

\medskip
Now denote by $\bar\nabla _F$ the differential $\dbar$ acting on $\A\otimes_{\A^0} 
\F=\A\otimes_{\O_X}\!F$ along the
first tensor factor. It makes $\A\otimes_{\O_X}\!F$ into a DG-module over $\A$. It is not a
$\dbar$-superconnection, because in general it is not a locally free as an $\A^\sharp$-module,
but we will construct a $\dbar$-superconnection 
quasi-isomorphic to $\A\otimes_{\O_X}\!F$. 

Choose a resolution $\E^{\bu}$ as in (\ref{resol1}) and put $M^{\sharp}=(\A\otimes \E^{\bu})^{\sharp}$ as a
graded module over $\A^\sharp$ with the grading given by the sum of gradings on $\A$ and $\E^{\bu}$. 
Let $\bar\gamma$ be the differential in the resolution $\E^{\bu}$. We denote also 
$\bar\gamma$ its $\A^\sharp$-linear (as of operator of degree 1) extension to $M^{\sharp}$. We denote 
$\bar\gamma_0$ the map $\E^0\to\F$, its $\A^\sharp$-linear extension to the map
$(\A\otimes \E^0)^{\sharp}\to (\A\otimes\F)^{\sharp}$, and the extension to the map
$M^{\sharp}\to (\A\otimes\F)^{\sharp}$ which is zero on other components, 
$(\A\otimes \E^i)^{\sharp}\to (\A\otimes\F)^{\sharp}$ for $i\ne 0$.
\begin{theo}\label{m-as-superc}
$M^{\sharp}$ can be endowed with a structure of a $\dbar$-superconnection such that $\bar\gamma_0$ 
furnishes a quasi-isomorphism $\bar\gamma_0\!:M\to\A\otimes_{\O_X} F$.
\end{theo}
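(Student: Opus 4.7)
The plan is to put on the graded $\A^\sharp$-module $M^\sharp = (\A \otimes_{\A^0} \E^\bu)^\sharp$ a differential of the form $\bar D = \bar\gamma + \bar\nabla + \sum_{i \ge 2} \bar\beta_i$ as in \eqref{expansion}, where $\bar\gamma$ is the given resolution differential extended by $\A^\sharp$-linearity, $\bar\nabla = (\bar\nabla_j)_j$ is a family of $\dbar$-connections on the $\E^j$, and the $\bar\beta_i$'s are successive correction terms of bidegree $(i,\,1-i)$. The condition $\bar D^2 = 0$ breaks into the graded equations \eqref{D2=0}, which I solve order by order; the series terminates because $\A^i = 0$ for $i > \dim X$.

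The delicate first step is to choose $\bar\nabla$ with $[\bar\gamma, \bar\nabla] = 0$, equivalently $(1 \otimes \bar\gamma_j)\,\bar\nabla_j = \bar\nabla_{j+1}\,\bar\gamma_j$ for every $j$. I build the $\bar\nabla_j$'s by descending induction on $j$, seeding the induction with the natural Dolbeault operator $\bar\nabla^\F = \dbar$ on $\F$ at the augmentation. At each step $\bar\nabla_{j+1}\,\bar\gamma_j$ takes values in $\A^1 \otimes \mathrm{image}(\bar\gamma_j)$, because $(1 \otimes \bar\gamma_{j+1})\,\bar\nabla_{j+1}\,\bar\gamma_j = \bar\nabla_{j+2}\,\bar\gamma_{j+1}\,\bar\gamma_j = 0$ by the inductive hypothesis together with $\bar\gamma^2 = 0$. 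The lift $\bar\nabla_j$ then exists because $\E^j$ is $\A^0$-locally free, hence projective, so post-composition with $\bar\gamma_j$ is a surjection onto $\Hom_{\A^0}(\E^j, \mathrm{image}(\bar\gamma_j))$; globally, the $\Hom$-sheaves involved are modules over the fine sheaf $\A^0$ and the lift extends to all of $X$.

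Granted $[\bar\gamma, \bar\nabla] = 0$, I solve inductively for $\bar\beta_i$ the equations $[\bar\gamma, \bar\beta_i] = -Y_i$, with $Y_i$ the bidegree $(i,\,2-i)$ part of $(\bar\gamma + \bar\nabla + \bar\beta_2 + \cdots + \bar\beta_{i-1})^2$ (so $Y_2 = \bar\nabla^2$, $Y_3 = [\bar\nabla, \bar\beta_2]$, and so on). A super-Jacobi/Bianchi calculation, using the equations already solved at lower order, shows that $Y_i$ is $[\bar\gamma, -]$-closed in the Hom-complex $\Hom^\bu_{\A^0}(\E^\bu, \A^i \otimes \E^\bu)$. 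The obstruction class of $Y_i$ lies in $H^{2-i}$ of this complex, which equals $\mathrm{Ext}^{2-i}_{\A^0}(\F, \A^i \otimes \F)$ since $\E^\bu$ is a projective $\A^0$-resolution of $\F$ and $\A^i$ is $\A^0$-flat. For $i \ge 3$ the exponent is negative and this group is zero. For $i = 2$ it is $\Hom_{\A^0}(\F, \A^2 \otimes \F)$, and the class of $Y_2 = \bar\nabla^2$ is the induced map $\F \to \A^2 \otimes \F$, which equals $(\bar\nabla^\F)^2 = \dbar^2 = 0$ by the compatibility arranged in step two. Hence $\bar\beta_i$ exists, and globally by the same fineness argument.

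Finally, $\bar\gamma_0\colon M \to \A \otimes_{\O_X}\!F$ is a chain map because $(1 \otimes \bar\gamma_0)\bar\nabla_0 = \dbar\,\bar\gamma_0$ by construction, and because the components $\bar\gamma$ and $\bar\beta_i$ restricted to $\E^0$ either vanish (for $\bar\gamma|_{\E^0}$, since $\E^1 = 0$) or land in $\E^{1-i} \subset \E^{<0}$ (for $\bar\beta_i$ with $i \ge 2$), all of which $\bar\gamma_0$ kills. It is a quasi-isomorphism by filtering $M$ by the sub-DG-modules $\A^{\ge p} \otimes \E^\bu$: the associated graded differential is just $\bar\gamma$, so the $E_1$ page is $\A^p \otimes H^q(\E^\bu, \bar\gamma)$, concentrated at $q = 0$ with value $\A^p \otimes \F$ by flatness of $\A^p$; the analogous filtration on $\A \otimes_{\O_X}\!F$ has the same $E_1$ page, and $\bar\gamma_0$ induces the identity on it. The main obstacle is the super-Jacobi verification that each $Y_i$ is $[\bar\gamma,-]$-closed, which requires careful bookkeeping of signs and bidegrees, after which the Ext computation makes the obstructions disappear for free except in the single case $i = 2$, which is handled by $\dbar^2 = 0$.
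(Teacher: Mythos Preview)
Your proof is correct and follows essentially the same route as the paper's: build $\bar\nabla$ compatible with $\bar\gamma$ by descending induction starting from $\bar\nabla_F$, then solve $[\bar\gamma,\bar\beta_i]=-Y_i$ inductively using that the obstruction class lives in degree $2-i$ cohomology of the endomorphism complex (negative for $i\ge 3$, and identified with $\bar\nabla_F^2=0$ for $i=2$), and conclude the quasi-isomorphism by the filtration spectral sequence. The paper phrases the obstruction groups as cohomology of $\Gamma(X,\A^k\otimes_{\A^0}\mathcal{E}nd^{\bu}_{\A^0}\E^{\bu})$ and you phrase them as $\mathrm{Ext}^{2-i}_{\A^0}(\F,\A^i\otimes\F)$, but these are the same computation.

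One small elision worth tightening: in your lifting step you invoke projectivity of $\E^j$ to conclude that $\bar\nabla_j$ exists, but $\bar\nabla_j$ is a connection, not an element of $\Hom_{\A^0}(\E^j,\A^1\otimes\E^j)$. The fix is standard---choose any $\dbar$-connection $\bar\nabla_j^0$ on $\E^j$ first; then $\bar\nabla_{j+1}\bar\gamma_j-(1\otimes\bar\gamma_j)\bar\nabla_j^0$ is genuinely $\A^0$-linear (both terms obey the same Leibniz rule), and projectivity lets you lift this difference to an $\A^0$-linear correction $\delta_j$, so that $\bar\nabla_j=\bar\nabla_j^0+\delta_j$ works. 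The paper handles this by passing to a trivial bundle and choosing values on a basis, which is the same maneuver in different clothing. You should also note (as the paper does) that identifying $\ker(1\otimes\bar\gamma_{j+1})$ with $\A^1\otimes\mathrm{image}(\bar\gamma_j)$ uses flatness of $\A^1$ over $\A^0$.
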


\noindent
\textit{Proof.}~
We claim
that there exists a system of (non-flat, in general) $\dbar$-connections $\bar\nabla_i$ on
$\E^{i}$'s which commute with $\bar\gamma$ and such that
\begin{equation}\label{nabla-nablaf}
\bar\gamma_0 \bar\nabla_{0} ={\bar \nabla}_F \bar\gamma_0.
\end{equation}
First, we construct a $\dbar$-connection on $\E^0$ compatible with $\bar\gamma_0$, $\bar\nabla_F$ in this sense. 

Applying functor $\Gamma (X,\A^1\otimes_{\A^0} (-))$ to the sequence (\ref{resol1}) gives an exact sequence, 
because the sheaf $\A^1$ is flat over $\A^0$ and all the sheaves in the sequence are fine, hence acyclic.
In particular, $\Gamma (X, \A^1\otimes_{\A^0}\E^0)\to \Gamma (X, \A^1\otimes_{\A^0}\F)$ is an epimorphism.

If $\E^0$ is a 
free $\A^0$-module, i.e. a trivial smooth vector bundle, then take a basis $\{s_i\}$
of its sections and define $\bar\nabla (s_i)$ to be any element in $\A^1\otimes \E^0$
such that $\bar\gamma_0\bar\nabla (s_i)=\bar\nabla_F\bar\gamma_0(s_i)$.  
Such an element exists in view of the above epimorphism on global sections. If
$\E^0$ is not a trivial bundle, then consider the direct sum $S=\E^0\oplus \G$ which is a
trivial vector bundle. Take, as above, a connection $\bar\nabla_S : S\to \A^1\otimes S$ on
$S$ compatible with the composite map $S\stackrel{p}\to \E^0\stackrel{\bar\gamma_0}{\to }\F$, 
where $p: S\to \E^0$ is the projection. Then we have:
$$
\bar\gamma_0 p_0\bar\nabla_S = \bar\nabla_F\bar\gamma_0p_0.
$$
The restriction $\bar\nabla_0=p_0\bar\nabla_S |_{\E^0}$ of this connection to $\E^0$ will define a connection on $\E^0$ compatible
with $\bar\gamma_0$, that is $\bar\gamma_0\bar\nabla_0= \bar\nabla_F\bar\gamma_0$.

The connections $\bar\nabla_i$'s on other $\E^i$'s, commuting with $\bar\gamma$ are constructed consecutively by decreasing $i$ and 
using the same argument with replacing the sequence (\ref{resol1}) by its truncation 
$$
0\to \E^{-n}\to \dots \to \E^{-i}\to \F_i\to 0,
$$
where $\F_1$ is the kernel of $\bar\gamma_0$ and $\F_i$, for $i\ge 2$, is the kernel of $\bar\gamma:  \E^{-i+1}\to  \E^{-i+2}$.

Using the Leibniz rule, we extend $\bar\nabla$ to a differential operator (also denoted by
$\bar\nabla$) on $M^{\sharp}=\A\otimes \E^{\bu}$ which has bidegree $(1,0)$ 
and (anti)commutes with $\bar\gamma$.

We want to find a differential in $M^{\sharp}$ of the form ${\bar D}=\bar\gamma +\bar\nabla + \sum_{i\ge 2} \bar\beta_i,$
where $\bar\beta_i$'s are $\A^0$-module endomorphisms of $M$ of bidegree $(i,1-i)$.
Recall that the equation ${\bar D}^2=0$ implies a series of equations (\ref{D2=0})
on components of ${\bar D}$.

We proceed by induction. Suppose we have already found $\bar\beta_i$'s, for $i=2, \dots, k-1$, which satisfy the
first $k-1$ equations of (\ref{D2=0}). Then the $k$-th equation looks like:
\begin{equation}\label{eqk}
[\bar\gamma, \bar\beta_k]+u_k=0,
\end{equation}
where $u_k$ depends on $ \bar\nabla, \bar\beta_2,\dots ,\bar\beta_{k-1}$. 

Note that $u_k\in \Gamma (X, \A^k\otimes_{\A^0}{\cal E}nd^{2-k}_{\A^0}\E^{\bu})$ and Bianchi identity implies $[\bar\gamma, u_k]=0$. 

The exact sequence (\ref{resol1}) can be interpreted as a quasi-isomorphism of $\E^{\bu}$ with $\F$, which implies the isomorphism in $\D^b(\A^0\mathrm{-\mod})$:
\begin{equation}\label{ende-endf}
{\cal E}nd^{\bu}_{\A^0}\E^{\bu}={\mathbb R}{\cal E}nd_{\A^0}\F .
\end{equation}
\noindent
Therefore, the $[\bar\gamma,\bu\,]$-complex ${\cal E}nd^{\bu}_{\A^0}\E^{\bu}$ has trivial cohomology in all negative degrees. 
Since $\A^k$ is a flat sheaf over $\A^0$ and all the sheaves in the complex are fine, hence acyclic,  
the complex $\Gamma (X, \A^k\otimes_{\A^0}{\cal E}nd^{\bu}_{\A^0}\E^{\bu})$ has trivial cohomology 
in all negative degrees for each $k$. The differential in this complex is exactly $[\bar\gamma,\bu\,]$. 
This implies that the equation (\ref{eqk}) has solution for any $k\ge 3$, 
because, then, $u_k$ is a closed element of negative degree $2-k$ in this complex. 

We are left only with the 3rd equation in (\ref{D2=0}), 
where $u_2=\bar\nabla^2\in \Gamma (X, \A^2\otimes_{\A^0}{\cal E}nd^0_{\A_0}\E)$. 
The obstruction to the solution of the equation $[\bar\gamma, \bar\beta_2]+u_2=0$ 
lies in the zeroth cohomology of the complex $\Gamma (X, \A^2\otimes_{\A^0}{\cal E}nd^{\bu}_{\A^0}\E^{\bu})$, 
which, by above,  equals $\Gamma (X, \A^2\otimes_{\A^0}{\cal E}nd_{\A^0}\F )$. 
Note that $\bar\nabla$ is a chain presentation for ${\bar\nabla}_F$, because $\bar\nabla$ 
commutes with $\bar\gamma$ and in view of eq.\ \eqref{nabla-nablaf}.
This implies that, under the identification of the zeroth cohomology of the complex 
$\Gamma (X, \A^1\otimes_{\A^0}{\cal E}nd_{\A^0}\E^{\bu})$ with 
$\Gamma (X, \A^1\otimes_{\A^0}{\cal E}nd^{\bu}_{\A^0}\F )$, 
the connection $\bar\nabla$ corresponds to ${\bar \nabla}_F$.  
Then, the cohomology class of $u_2=\bar\nabla^2$ is identified with ${\bar \nabla}_F^2=0$. 
Therefore, the obstruction to solving the equation $[\bar\gamma, \bar\beta_2]+u_2=0$ 
with respect to $u_2$ also vanishes. 

Now let us check that the $\dbar$-superconnection $M$ constructed in this way is quasi-isomorphic to
$F$. We have the chain map $\bar\gamma_0: M\to \A\otimes F$. The filtration
of $M$ induced by the standard filtration of $\A$ gives a spectral sequence which implies that $\bar\gamma_0$ is a
quasi-isomorphism. The fact that the cohomology of $\A\otimes F$ is $F$ can be checked
locally. Take locally a resolution $E^{\bu}$ of $F$ by free $\O_X$-modules.
Consider the bicomplex $\A\otimes E^{\bu}$. The comparison of the two spectral
sequences of the bicomplex completes the proof of Theorem \ref{m-as-superc}.
$\square$

\medskip
Since the functor $\Phi$ is fully faithful by Proposition \ref{fulfai}, it follows that its image is a full
triangulated subcategory in $\dbcoh (X)$ that contains all coherent sheaves.
Therefore, the essential image coincides with $\dbcoh (X)$. This concludes the proof
of Theorem \ref{equiv}.

\mysection{Chern Classes of Superconnections}\label{Chern}

One way to define Chern classes with values in de Rham cohomology for any coherent sheaf $F$ 
on a smooth complex manifold is to choose an appropriate real-analytic or smooth resolution 
for the sheaf $\F=\A^0\otimes_{\O_X}F$ as in Lemma \ref{lem-resol}, apply the standard construction of 
Chern-Weil to the members of the resolution and to alternate them.
According to the example of C. Voisin  \cite{Voisin}, not every coherent sheaf has a 
resolution by holomorphic vector bundles. Hence, Chern classes of the members of the resolution might not have $(p,p)$-type. 

We refine this approach via superconnections in order to establish the fact that the classes of $F$ 
does have $(p,p)$-type, and then apply this technique to defining Bott-Chern classes of coherent sheaves 
and, more generally, of objects in $\dbcoh (X)$. We define Chern classes for objects in the derived category 
$\dbcoh (X)$ by using the presentation of them by ${\bar {\partial}}$-superconnections. 
We will see that this allows us to establish 
the property of the Chern classes to be of the diagonal type in a relatively easy way.

An alternative approach to define Chern classes of coherent sheaves is, first, to get rid of the torsions by 
induction in the dimension of their supports and, then, 
to choose a suitable birational transformation 
of the manifold, which reduces the problem to the case when the coherent sheaf is locally free ({\it cf.} \cite{Griv}). 

\subsection{\sc Chern Classes of de Rham Superconnections}\label{CCSC}
We follow here the ideas of Quillen \cite{Q} and Freed \cite{F}. 
We acknowledge also expositions in \cite{BSW,Qi}

In this subsection we consider a smooth real manifold $X$ and the sheaf $\Lambda^{\bu}$ of graded algebras 
of smooth complex-valued forms on $X$.
We consider $\Lambda^{\bu}$ as a ${\mathbb Z}/2$ graded sheaf of algebras. 
Let ${\cal M}$ be a ${\mathbb Z}/2$ graded module locally free over this algebra and 
${\cal D}:{\cal M}\to {\cal M}$ an odd differential operator (not necessarily squared to 0), 
satisfying the ordinary Leibniz rule for local sections $s$ of ${\cal M}$ and $\alpha $ of $\Lambda^{\bu}$:
$$
{\cal D}(\alpha s)={\rm d}{\alpha}\cdot s +(-1)^{|\alpha |}\alpha {\cal D}(s)\,,
$$
where $\rm d$ is de Rham's differential.

We say that $({\cal M},{\cal D})$ is a {\em de Rham superconnection}.
Note the essential difference in our terminology: for de Rham superconnections we don't require ${\cal D}^2=0$, while for $\dbar$-superconnections ${\bar D}^2=0$.

Define the curvature of $({\cal M},{\cal D})$ by the formula:
$$
{\cal F}= {\cal D}^2.
$$
This is an even $\Lambda^{\bu}$-endomorphism of ${\cal M}$. There is a supertrace functional, $\tr$, 
which is defined on endomorphisms of a locally free module over a (super)commutative algebra 
$\Lambda^{\bu}$ and takes values in $\Lambda^{\bu}$.

Consider the form $\omega_k\in \Lambda^{\rm even}$ defined by the (super)trace of the $k$-th power of ${\cal F}$:
$$
{\omega}_k:= {\rm tr}{\cal F}^k.
$$
Note that $\omega_k$ is a sum of forms of various even degrees. By the same argument as for the standard Chern classes, 
we see that $\omega_k$ is a closed form. Indeed:
$$
{\rm d}({\rm tr}{\cal F}^k)={\rm tr}[{\cal D}, {\cal F}^k]=k{\rm tr}([{\cal D}, {\cal F}]{\cal F}^{k-1})=0,
$$
because of the Bianchi identity:
$$
[{\cal D}, {\cal F}]=[{\cal D}, {\cal D}^2]=0.
$$
Here and henceforth we denote $[\,\bu\,,\bu\,]$ the supercommutator with respect to the grading on forms.
Define the $k$-th coefficient of the Chern character of ${\cal M}$ via the cohomology class of $\omega_k$:
\begin{equation}\label{chernderham}
{\rm ch}_k{\cal M}=\frac 1{k!}\left({\text{\small$\frac{\mathrm i}{2\pi}$}}\right)^k\left[\omega_k\right].
\end{equation}

Any replacement of ${\cal D}$ with another odd operator ${\cal D}'$ in $\M$ satisfying the same Leibniz rule 
does not change the cohomology class of $\omega_k$.
Indeed, the set of such operators is an affine space, hence we can consider a smooth path ${\cal D}(t)$ 
connecting ${\cal D}$ with ${\cal D}'$.
Let $a(t)={\frac {\rm d}{{\rm d}t}\cal D}(t)$. Then again in view of the Bianchi identity, we have: 
$$
{\frac {\rm d}{{\rm d}t}}\omega_k(t)= k\, {\rm tr}([{\cal D}(t), a(t)]{\cal F}^{k-1}(t))=k\, {\rm d}({\rm tr}\, a(t){\cal F}^{k-1}(t)),
$$
i.e. the derivative is an exact form, which implies that the cohomology class of $\omega_k(t)$ is constant along the path.

Consider ${\cal E}={\cal M}/\Lambda^+{\cal M}$, where $\Lambda^+$ is the positive degree part of $\Lambda^{\bu}$.
${\cal E}$ is a ${\mathbb Z}/2$ graded vector bundle: ${\cal E}={\cal E}^+\oplus {\cal E}^-$. 
By choosing a splitting ${\cal E}\to {\cal M}$ of the quotient map ${\cal M}\to {\cal E}$, 
we get a non-canonical isomorphism of $\Lambda^{\bu}$-modules ${\cal M}=\Lambda^{\bu}\otimes {\cal E} $. 
We can choose ${\cal D}$ to be a direct sum of ordinary connections in ${\cal E}^+$ and ${\cal E}^-$ extended 
in the standard way to $\Lambda^{\bu}\otimes{\cal E}^+ $ and $\Lambda^{\bu}\otimes {\cal E}^-$.
It follows that the curvature of ${\cal D}$ is a direct sum of the curvatures 
of the connections on ${\cal E}^+$ and ${\cal E}^-$. 
Then, according to the Chern-Weil definition of Chern classes for vector bundles,  
the cohomology class $\frac 1{k!}(\frac{\rm i}{2\pi})^k[\omega_k]$ lies in ${\rm H}_{\rm dR}^{2k}(X, {\mathbb C})$ 
and equals to the $k$-th component of the Chern character for ${\cal E}$: 
\begin{equation}\label{chernme}
{\rm ch}_k{\cal M}={\rm ch}_k{\cal E}^+-{\rm ch}_k{\cal E}^-.
\end{equation}
Note that the right hand side is nothing but the ${\rm ch}_k\E$ for a $\ZZ/2$-graded vector bundle $\E$. 
It is well known that  ${\rm ch}_k{\E}$ and, hence, ${\rm ch}_k\M$ belong in fact to $H^{2k}_{\rm dR}(X,\QQ)$.

\subsection{\sc Chern Classes of ${\bar {\partial}}$-Superconnections} \label{52}

Now let $X$ be a complex manifold and $ M$ a flat $\bar\partial$-superconnection with differential ${\bar D}_M:M\to M$ on $X$. 
Consider the quotient ${\cal E}=M/{\cal A}^{0,+}\!M$. It has a ${\mathbb Z}$ grading: ${\cal E}={\cal E}^{\bu}$.  

Let us consider $X$ as a real manifold and replace $M$ by a module ${\cal M}$ 
over the algebra of smooth complex-valued forms bigraded in a standard way by their $(p,q)$-types 
${\Lambda}^{\bu}={\cal A}^{\bu , \bu}$:
$$
{\cal M}={\cal A}^{\bu, \bu}\otimes_{{\cal A}^{0,{\bu}}}M
$$

Define the operator ${\bar D}: {\cal M}\to {\cal M}$ as an extension of ${\bar D}_M$ from $M$ to ${\cal M}$ 
by the following Leibniz rule for local forms $\alpha\in {\cal A}^{\bu, \bu}$ and local sections $s$ of $ M$:
$$
{\bar D}(\alpha  s)={\bar {\partial}}\alpha \cdot s +(-1)^{|\alpha |}\alpha {\bar D}_M(s).
$$

We define the {\em Chern character of a ${\bar {\partial}}$-superconnection} $M$ 
as the Chern character of an arbitrary de Rham superconnection in 
${\cal M}$, that is ${\rm ch}_k(M)={\rm ch}_k(\M)$. 
We will prove that the $\mathrm{ch}_k$-component of 
this character has a $(k,k)$-type, what means (on a not necessarily K\"ahler manifold) 
a de Rham class which has a representative by a $(k,k)$-form.

A choice of splitting ${\cal E}^{\bu} \to M$ for the factorization map $M\to {\cal E}^{\bu}$ 
defines a non-canonical ${\cal A}^{\bu}$ module isomorphism 
$M={\cal A}^{\bu}\otimes_{{\cal A}^{0}}{\cal E}^{\bu}$ and an ${\cal A}^{\bu,\bu}$ 
module isomorphism: 
\begin{equation}\label{splitting_of_M}
{\cal M}={\cal A}^{\bu ,\bu }\otimes_{{\cal A}^{0}}{\cal E}^{\bu}.
\end{equation}
We shall consider $\M$ as a $\ZZ/2$-graded module $\M=\M_+\oplus\M_-$\,, where
\begin{equation}
    \M_+=\!\!\!\bigoplus_{p+q+i=\in2\ZZ}\A^{p,q}\otimes\E^i\,,~~~~
    \M_-=\!\!\!\bigoplus_{p+q+i\in2\ZZ+1}\A^{p,q}\otimes\E^i
\end{equation}

Let us chose arbitrary Hermitian forms $h_i$'s, on each vector bundle ${\cal E}^i$. 
We denote $(\,\bu\,,\bu\,)$ the sesquilinear pairing defined by $h_i$'s on ${\cal E}^{\bu}=\oplus\,\E^i$.
We use the same notation for its extension to a graded ${\cal A}^{\bu,\bu}$-valued 
pairing on ${\cal M}$, which is ${\cal A}^{\bu,\bu}$-sesquilinear  
with respect to the complex conjugation on forms in ${\cal A}^{\bu ,\bu }$. 
In other words, for any $s_1, s_2\in\M$, 
we get a pairing $(s_1, s_2)\in {\cal A}^{\bu ,\bu }$, 
and for any $\alpha \in {\cal A}^{\bu ,\bu }$ this pairing satisfies:
$$
(s_1, \alpha s_2)=(-1)^{|\alpha ||s_1|}\alpha (s_1, s_2),\ \ 
(\alpha s_1, s_2 )={\bar \alpha} (s_1, s_2). 
$$

Let $D$ be an operator on ${\cal M}$ uniquely defined by the equation 
on arbitrary local sections $s_1$ and $s_2$ of ${\cal M}$:
\begin{equation}\label{d-from-dbar}
{\bar {\partial}}(s_1, s_2)=(Ds_1, s_2)+(-1)^{|s_1|}(s_1, {\bar D}s_2)\,,
\end{equation}
or, equivalently:
\begin{equation}\label{d-from-dbar2}
{\partial}(s_1, s_2)=({\bar D}s_1, s_2)+(-1)^{|s_1|}(s_1, Ds_2)\,. 
\end{equation}

The existence and uniqueness of such  $D$ is similar to the existence and uniqueness 
of a Hermitian connection in a holomorphic vector bundle compatible with the 
$\bar \partial$-operator on local sections of the bundle.

Note that $D$ satisfies the Leibniz rule:
$$
D(\alpha  s)={\partial}\alpha \cdot s +(-1)^{|\alpha |}\alpha D(s)\,. 
$$

Make ${\cal M}$ into a de Rham superconnection by introducing an odd (in  
${\mathbb Z}/2$ grading) differential operator ${\cal D}$ on ${\cal M}$:
$$
{\cal D}= D+{\bar D}.
$$
Then ${\cal D}$ satisfies:
$$
{\rm d}(s_1, s_2 )= ({\cal D}s_1, s_2)+(-1)^{|s_1|}(s_1, {\cal D}s_2).
$$

Since ${\bar D}^2=0$, we have that: 
\begin{multline*}
    0={\partial }^2(s_1, s_2) = \\
    (D^2s_1, s_2)+(-1)^{|s_1|+1}(Ds_1, {\bar D}s_2)+(-1)^{|s_1|} 
    (Ds_1, {\bar D}s_2)+(s_1, {\bar D}^2 s_2) =  
    (D^2s_1, s_2) 
\end{multline*}
As this holds for any $s_1, s_2\in {\cal E}^{\bu}$, it follows that $D^2=0$. 
We can say that $D$ defines a flat ${\partial}$-superconnection on the module 
${\tilde M}={\cal A}^{\bu , 0}\otimes_{{\cal A}^{0,0}}{\cal E}^{\bu}$.

Then the curvature ${\cal F}$ of the de Rham superconnection ${\cal D}$ has the expression:
\begin{equation}\label{curvaturecald}
{\cal F}={\cal D}^2=[D, {\bar D}].
\end{equation}

Now, we fix a (non-canonical) isomorphism 
${\cal M}={\cal A}^{\bu,\bu }\otimes_{{\cal A}^{0,0}}{\cal E}^{\bu}$ 
as in eq.\ (\ref{splitting_of_M}), and decompose (not necessarily ${\cal A}^{\bu ,\bu }$-linear) 
the operators acting on sections  of ${\cal M}$ into their homogeneous 
components with respect to the \textit{triple degree}, 
where an operator with degree $(p,q,r)$ acts as 
${\cal A}^{a,b}\otimes_{{\cal A}^{0,0}}{\cal E}^{c}\to{\cal A}^{a+p,\,b+q}\otimes_{{\cal A}^{0,0}}{\cal E}^{c+r}$,  
that is, it shifts the grading as: 
$$
(a,b,c)\to (a+p, b+q, c+r).
$$
We denote $\deg (A)=(d_1(A),d_2(A),d_3(A))$ the triple degree of a homogeneous operator $A$. 
Similarly, the \emph{tridegree} of a section $s$ of ${\cal A}^{a,b}\otimes_{{\cal A}^{0,0}}{\cal E}^{c}$ 
is denoted as 
$$
\deg (s)=(d_1(s),d_2(s),d_3(s)):=(a,b,c).
$$
Also, for a form $\omega$ in ${\cal A}^{a,b}$, we will use the notation for its \emph{bidegree}:
$$
\deg (\omega )= (d_1(\omega ), d_2(\omega )):=(a,b).
$$
Since the pairing $(\,\bu\,,\bu\,)$ on $\cal M$ is sesquilinear, 
for any two homogeneous sections $s_1$ and $s_2$ of $\cal M$ we have that
\begin{equation}\label{weight-of-form}
    \deg((s_1, s_2))=
    (d_2(s_1)+d_1(s_2), d_1(s_1)+d_2(s_2))
\end{equation}

To simplify notation, we denote ${\bar \beta_0}:={\bar \gamma}$ and ${\bar \beta_1}:={\bar \nabla}$ 
the components in the expression \eqref{expansion} for superconnection ${\bar D}$. 
Then, the degree decomposition for $\bar D$ reads:
\begin{equation}\label{dbar-via-beta}
{\bar D}=\sum_{i\ge 0}{\bar \beta_i}\,,
\end{equation}
where ${\bar \beta_i}$ is homogeneous of degree 
\begin{equation}\label{degree-bar-beta}
\deg ({\bar \beta_i})=(0,i, 1-i).
\end{equation}

\begin{lemma}\label{degrees}
The operator $D$ has the following degree decomposition:
\begin{equation}\label{d-via-beta}
D=\sum_{i\ge 0}\beta_i,
\end{equation}
where $\beta_i$ has degree $(i,0, i-1)$.
\end{lemma}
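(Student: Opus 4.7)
The plan is to decompose $D$ into triple-degree homogeneous components and to use the defining relation \eqref{d-from-dbar2} to determine which tridegrees may occur. The key ingredient is that the Hermitian pairing on $\E^{\bu}$ is block diagonal: components of different $\E$-degree are orthogonal, which rigidly links the tridegree of a summand of $D$ to the way in which it pairs against $\bar D$.

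First I would test equation \eqref{d-from-dbar2} on local sections $s_1,s_2$ of pure tridegrees $(0,0,c_1)$ and $(0,0,c_2)$, that is, sections coming directly from $\E^{\bu}$. By \eqref{weight-of-form}, the form $\partial(s_1,s_2)$ sits in bidegree $(1,0)$; the contribution $(\bar\beta_i s_1, s_2)$ sits in bidegree $(i,0)$ and is nonzero only when $c_1+1-i=c_2$, due to the orthogonality between distinct $\E^j$'s; and if I write $D=\sum \beta_{(d,e,f)}$ as a sum of tridegree-homogeneous pieces, then $(s_1,\beta_{(d,e,f)} s_2)$ sits in bidegree $(d,e)$ and is nonzero only when $c_1=c_2+f$.

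Next I would match bidegree components of the two sides of \eqref{d-from-dbar2}. All terms coming from $\bar D$ have second bidegree index $0$, and $\partial(s_1,s_2)$ has second bidegree index $0$ as well, so summands $\beta_{(d,e,f)}$ with $e\neq0$ have no counterpart and must vanish; thus $e=0$ for every nonzero piece of $D$. For each $i\neq 1$, the bidegree $(i,0)$ equation then reduces to $(\bar\beta_i s_1,s_2)+(-1)^{|s_1|}\sum_f(s_1,\beta_{(i,0,f)}s_2)=0$, while for $i=1$ the $(1,0)$ component absorbs $\partial(s_1,s_2)$ itself. Combining the bidegree match with the Hermitian orthogonality constraint $c_1-c_2=f$ coming from the $\beta_{(i,0,f)}$ piece, and the constraint $c_1-c_2=i-1$ coming from the $\bar\beta_i$ piece, one sees that $\beta_{(i,0,f)}$ can contribute to this cancellation only when $f=i-1$, and any $\beta_{(i,0,f)}$ with $f\neq i-1$ must vanish since it would contribute in a stratum where no term can cancel it. This yields $D=\sum_{i\ge 0}\beta_i$ with $\beta_i$ of tridegree $(i,0,i-1)$.

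The main obstacle is the bookkeeping around the sesquilinear pairing and the need to check that the argument on $\E^{\bu}$-sections extends to all of $\M$. The latter is handled by noting that both $\bar D$ and $D$ satisfy Leibniz rules (with respect to $\dbar$ and $\partial$ respectively), so each is determined by its action on $\E^{\bu}$-valued sections; the bidegree shifts produced by multiplication with a form $\alpha\in\A^{a,b}$ change the bidegrees of both sides of \eqref{d-from-dbar2} in the same way and therefore do not introduce any additional constraints on the tridegree decomposition of $D$.
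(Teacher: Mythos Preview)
Your proposal is correct and follows essentially the same approach as the paper: decompose $D$ into tridegree components and use the defining relation together with the orthogonality of distinct $\E^j$'s under the Hermitian pairing to pin down which tridegrees occur. The only cosmetic differences are that the paper uses the equivalent relation \eqref{d-from-dbar} rather than \eqref{d-from-dbar2}, and tests directly on arbitrary homogeneous sections of $\M$ (so it does not need your final Leibniz-rule extension step from $\E^{\bu}$ to $\M$).
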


\noindent
\textit{Proof.}
Let $\omega_{(P,Q)}$ denote the bidegree $(P,Q)$ component of an inhomogeneous 
form $\omega$ in $\A^{\bu,\bu}$. Consider 
a decomposition of $D$ in its tridegree components,
$$
D=\sum D^{p,q,j}.
$$
Let us choose homogeneous local sections $s_1,s_2$ of $\M$, and denote $\deg((s_1,s_2))=(A,B)$. 
Formula \eqref{d-from-dbar} implies that
\begin{equation}\label{dpqr}
    \sum_{p,q,i}(D^{p,q,j}s_1, s_2)_{(P,Q)}=\bar\partial(s_1,s_2)_{(P,Q)}-
    \sum_i(-1)^{|s_1|}(s_1,\bar\beta_is_2)_{(P,Q)}\,.
\end{equation}
Then, we notice that all terms in the right hand side vanish unless $P=A$ and 
$Q=B+d_3(s_2)-d_3(s_1)+1$. Indeed, note that $(s_1,s_2)=0$ unless $d_3(s_1)=d_3(s_2)$, hence $\deg(\bar\partial(s_1,s_2))=(A,B+1)=(A, B+d_3(s_2)-d_3(s_1)+1)$. Also, $(s_1,\bar\beta_is_2)=0$ if 
$d_3(s_1)\neq d_3(\bar\beta_is_2)$, while 
$d_2(s_1,\bar\beta_is_2)=B+i$ and 
$d_3(\bar\beta_is_2)=d_3(s_2)+1-i$, which implies the same formula the degree of the last term in (\ref{dpqr}). 

Since $\deg((D^{p,q,j}s_1, s_2))=(A+q,B+p)$
and $d_3(D^{p,q,j}s_1)=d_3(s_1)+j$, 
we have that 
$(D^{p,q,j}s_1, s_2)_{(P,Q)}=0$ unless $q=0$, $p=d_3(s_2)-d_3(s_1)+1$, 
and $d_3(s_1)+j=d_3(s_2)$. Since $s_1,s_2$ are arbitrary, we conclude that 
$D^{p,q,j}=0$ unless $q=0$ and $p=j+1$, which proves the lemma.
$\square$

\medskip
In view of the formulas (\ref{curvaturecald}), (\ref{dbar-via-beta}) and (\ref{d-via-beta}), the curvature has the form:
\begin{equation}\label{curv-by-betas}
     {\cal F}=\sum_{i,j\geqslant 0}[\beta_i,\bar{\beta_j}]
\end{equation}

Decompose the Chern form $\omega_k={\rm tr}{\cal F}^k$ into $(p,q)$-components: $\omega_k= \sum \omega_k^{p,q}$.
We shall show that the form has trivial all the off-diagonal entries.
\begin{lemma}\label{pqvanishing}
We have that 
$\omega_k^{p,q}=0,\ {\rm if}\ p\ne q.$
\end{lemma}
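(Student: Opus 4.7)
The plan is an elementary tridegree-counting argument based on the curvature expansion \eqref{curv-by-betas}. Using the tridegree introduced just before Lemma~\ref{degrees}, the building blocks of the curvature have tridegrees $\deg(\bar\beta_j)=(0,j,1-j)$ by \eqref{degree-bar-beta} and $\deg(\beta_i)=(i,0,i-1)$ by Lemma~\ref{degrees}. Therefore each commutator $[\beta_i,\bar\beta_j]$ appearing in the expansion \eqref{curv-by-betas} of $\mathcal F$ has tridegree $(i,j,i-j)$, so already at the level of a single factor, the ``$\mathcal E$-shift'' coordinate equals the first minus the second form-degree coordinate.

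Taking $k$-th powers preserves this structure: a generic summand of $\mathcal F^k$ is a composition $[\beta_{i_1},\bar\beta_{j_1}]\cdots[\beta_{i_k},\bar\beta_{j_k}]$ whose tridegree is the sum of tridegrees of the factors, namely
\[
\Bigl(\sum_\ell i_\ell,\ \sum_\ell j_\ell,\ \sum_\ell (i_\ell-j_\ell)\Bigr)=(P,Q,P-Q).
\]
In particular, the third component of the tridegree of any summand of $\mathcal F^k$ is automatically the difference $P-Q$ of its form-bidegree components.

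The key (and only substantive) step is the observation that the supertrace $\tr$ vanishes on any $\mathcal A^{\bu,\bu}$-linear endomorphism of $\mathcal M$ whose tridegree has nonzero $\mathcal E^{\bu}$-shift component. Indeed, choosing locally an $\mathcal A^{\bu,\bu}$-basis of $\mathcal M$ adapted to the splitting $\mathcal M=\mathcal A^{\bu,\bu}\otimes_{\mathcal A^{0}}\mathcal E^{\bu}$, such an operator has no diagonal matrix entries, so its supertrace is zero. Combining this with the previous paragraph, only summands of $\mathcal F^k$ with $P-Q=0$ can contribute to $\omega_k=\tr\mathcal F^k$, which forces the surviving form bidegree to be diagonal, hence $\omega_k^{p,q}=0$ for $p\neq q$. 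The minor subtlety to confirm is that the tridegree components of $\mathcal F^k$ are individually $\mathcal A^{\bu,\bu}$-linear (and so have well-defined supertrace); this follows from the $\mathcal A^{\bu,\bu}$-linearity of $\mathcal F$ together with the fact that projection onto a fixed tridegree preserves $\mathcal A^{\bu,\bu}$-linearity.
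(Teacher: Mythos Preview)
Your proof is correct and follows essentially the same approach as the paper: a tridegree count shows every summand of $\mathcal F^k$ has tridegree $(P,Q,P-Q)$, and the supertrace kills everything with nonzero third component. The paper states this more tersely, but the argument is identical.
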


\noindent
\textit{Proof.}~
According to (\ref{dbar-via-beta}) and (\ref{d-via-beta}),
the possible degrees for $D$ are $(p, 0, p-1)$, where $p\ge 0$, and the degrees for ${\bar D}$ are $(0, q, -q+1)$, where $q\ge 0$.
Then possible degrees for ${\cal F}=[D, {\bar D}]$ and for its powers ${\cal F}^k$ are $(p, q, p-q)$, for some $p,q\ge 0$. 

Note that the trace of an endomorphism is zero on components with degrees $(p,q,r)$ if $r\ne 0$, hence the result.
$\square$

\medskip
Now, we vary the Hermitian forms in vector bundles ${\cal E}^j$'s with a parameter $t$ according to the rule:
\begin{equation}\label{ht}
    h_j(t)= t^ih_j\,.
\end{equation}

This defines a variation   of the de Rham superconnection, ${\cal D}(t)$, and its curvature, ${\cal F}(t)$. 
Let us see how the Chern form $\omega_k(t)={\rm tr}{\cal F}(t)^k$ varies. 
Decompose the form $\omega_k(t)$ into its $(p,p)$-components 
with account of the result of Lemma \ref{pqvanishing}: 
$
\omega_k(t)= \sum_{p\ \ge 0} \omega_k^{p,p}(t).
$
In particular, $\omega_k=\omega_k(1)= \sum \omega_k^{p,p}$\,.

\begin{lemma}\label{lemmaht}
The following equality holds: ~
$
\omega_k^{p,p}(t)=t^{p-k}\omega_k^{p,p}.
$
\end{lemma}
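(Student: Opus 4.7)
The plan. Since $\bar D$ and its components $\bar\beta_j$ are built from the superconnection alone and do not involve the Hermitian forms, they are independent of $t$; all $t$-dependence of $\mathcal{F}(t)$ flows through $D(t)$. The main task is therefore to track the $t$-scaling of each component $\beta_i(t)$, after which the conclusion follows from a tridegree count.

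Step 1 (the main one). I would compute $\beta_i(t)$ explicitly by testing the defining equation \eqref{d-from-dbar} on homogeneous sections $s_1\in\A^{a_1,b_1}\otimes\E^{c_1}$ and $s_2\in\A^{a_2,b_2}\otimes\E^{c_2}$. Because the pairing vanishes unless $c_1=c_2$, choosing $c_1\neq c_2$ kills the left-hand side $\bar\partial(s_1,s_2)_t$ and, by the degree constraint $d_3(\beta_i s_1)=d_3(s_2)$, selects exactly one pair $(\beta_i(t),\bar\beta_i)$ on the right, namely $i=c_2-c_1+1$. The sesquilinear pairings $(\beta_i(t)s_1,s_2)_t$ and $(s_1,\bar\beta_i s_2)_t$ pick up different powers of $t$ from the rescaling $h_j(t)$ on the relevant $\E^j$-pieces (namely $t^{c_2}$ and $t^{c_1}$ respectively). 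Solving the resulting relation and comparing with its $t=1$ specialization yields the scaling law
$$
\beta_i(t)\;=\;t^{i-1}\,\beta_i\,.
$$

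Step 2. Substituting into the curvature formula \eqref{curv-by-betas},
$$
\mathcal{F}(t)\;=\;\sum_{i,j\geq 0} t^{i-1}\,[\beta_i,\bar\beta_j]\,,
$$
so that $\mathcal{F}(t)^k$ is a sum of products $t^{(i_1+\cdots+i_k)-k}\,[\beta_{i_1},\bar\beta_{j_1}]\cdots[\beta_{i_k},\bar\beta_{j_k}]$.

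Step 3. Take the supertrace and extract the $(p,p)$-component. A product $[\beta_{i_1},\bar\beta_{j_1}]\cdots[\beta_{i_k},\bar\beta_{j_k}]$ has tridegree $(\sum i_m,\sum j_m,\sum(i_m-j_m))$, and the supertrace vanishes on operators of nonzero $\E$-degree, forcing $\sum i_m=\sum j_m$ (this also re-derives Lemma \ref{pqvanishing}). The $(p,p)$-bidegree condition then fixes $\sum i_m=p$, so every surviving term carries the factor $t^{p-k}$. Summing and comparing with the $t=1$ value gives
$$
\omega_k^{p,p}(t)\;=\;t^{p-k}\,\omega_k^{p,p}\,.
$$

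The main obstacle is Step 1: the careful sesquilinear bookkeeping to isolate each $\beta_i(t)$ in closed form using test sections of differing $\E^j$-degrees, and tracking the way the metric scaling $h_j(t)$ propagates through the defining relation. Steps 2 and 3 are a routine tridegree count once this scaling law is in hand.
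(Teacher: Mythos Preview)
Your approach is essentially the paper's: it too derives $\beta_i(t)=t^{\,i-1}\beta_i$ by testing the defining relation \eqref{d-from-dbar} on tridegree-homogeneous sections and then performs the same tridegree count on $\mathcal{F}(t)^k$ to extract the $(p,p)$-component. One small caveat: your trick of taking $c_1\neq c_2$ only reaches $i\neq 1$; the paper treats the connection component $\beta_1$ separately by choosing $d_3(s_1)=d_3(s_2)=j$, so that $\bar\partial(s_1,s_2)_t=t^{j}\,\bar\partial(s_1,s_2)$ scales with the same power of $t$ as both right-hand terms, giving $\beta_1(t)=\beta_1$, consistent with $t^{\,i-1}\big|_{i=1}=1$.
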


\noindent
\textit{Proof.}~ 
Note that ${\bar D}$ does not change under the variation of metrics.  
The operator $D(t)$ is defined by eq.\ \eqref{d-from-dbar} via $\bard$ and $h_i(t)$ and we are going to 
determine how it depends on $t$.
To this end we extract various homogeneous components of eq.\ \eqref{d-from-dbar} as we did 
in proving Lemma \ref{degrees}, \textit{cf.}, eq.\ \eqref{dpqr}. 

First, we take $s_1,s_2$, such that $d_3(s_1)=d_3(s_2)=j$. Then, the hermitian form $(s_1,s_2)$ 
is determined by the metric $h_j(t)$ in only one component, $\E^j$, of $\E^{\bu}$ and we write it as 
$(s_1,s_2)(t)=t^j(s_1,s_2)_j$ in agreement with eq.\ \eqref{ht}. 
We consider now the following component of eq.\ \eqref{d-from-dbar},
\begin{align*}
    \dbar(s_1, s_2)(t)=(\beta_1(t)s_1, s_2)(t)+(-1)^{|s_1|}(s_1,\barb_1s_2)(t)\implies \\
    t^j\dbar(s_1, s_2)_j=t^j(\beta_1(t)s_1, s_2)_j+(-1)^{|s_1|}t^j(s_1,\barb_1s_2)_j \,.
\end{align*}
We conclude from this relation, that $\beta_1$ does not depend of $t$, which is part of the 
assertion of the lemma. 

Second, to find $\beta_i(t)$ for $i\neq 1$, we take $d_3(s_2)=j$, $d_3(s_1)=j+1-i$.
In this case, the term in the left hand side of eq.\ \eqref{d-from-dbar} vanishes and
we are left with the following relation
\begin{align*}
    (\beta_i(t)s_1, s_2)(t)+(-1)^{|s_1|}(s_1,\barb_is_2)(t)= \\ 
    t^j(\beta_i(t)s_1, s_2)_j+(-1)^{|s_1|}t^{j+1-i}(s_1,\barb_is_2)_{j+1-i}=0 \,.
\end{align*}
Thus, we conclude that, for all $i$, 
$$
\beta_i(t)=t^{i-1}\beta_i \,. 
$$
We turn now to $\omega_k^{p,p}$ and notice that, by definition,
$$
    \omega_k^{p,p}(t)=\sum\prod_{s=1,\ldots,k}[\beta_{p_s}(t),\barb_{q_s}]  \,,
$$
and the sum runs over $k$-tuples of pairs $(p_1,q_1),\ldots,(p_k,q_k)$, such that 
$\sum p_s=p$ and $\sum q_s=q$.

Then, we observe, that $\omega_k^{p,p}(t)=t^N\omega_k^{p,p}$\,, where $N=\sum_{s=1}^k(p_s-1)=p-k$, 
whence the result. $\square$

Let $H^{p,p}_{dR}(X)$ denote the de Rham cohomology classes which can be represented by a closed 
$2p$-form of type $(p,p)$.
Since the cohomology class of $\omega_k$ does not depend on connection, it cannot in particular vary 
with $t$ when the Hermitian structure varies according to eq.\ \eqref{ht}. 
Then, Lemmas \ref{pqvanishing} and \ref{lemmaht} imply the following
\begin{prop}\label{kth}
The $k$-th Chern character of a $\dbar$-superconnection is of type $(k,k)$, that is, 
${\rm ch}_k(M)\in H^{k,k}_{dR}(X)$.
\end{prop}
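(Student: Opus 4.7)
The plan is to combine the two preceding lemmas with the metric-independence of the Chern-character class established in Subsection \ref{CCSC}. The variation $h_j(t)=t^jh_j$ of the Hermitian forms alters the de Rham superconnection $\mathcal{D}(t)$ but leaves ${\bar D}$ (and hence the underlying $\dbar$-superconnection $M$) untouched, so I would first invoke the general independence argument of that subsection to conclude $[\omega_k(t)]=[\omega_k(1)]=[\omega_k]$ in $H^{2k}_{dR}(X,\mathbb{C})$ for every $t>0$.

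Next I would use Lemmas \ref{pqvanishing} and \ref{lemmaht} to write explicitly
\begin{equation*}
    \omega_k(t)=\sum_{p\geqslant 0}\omega_k^{p,p}(t)=\sum_{p\geqslant 0}t^{p-k}\omega_k^{p,p}.
\end{equation*}
Multiplying through by $t^k$, the right-hand side becomes a genuine polynomial in $t$ whose cohomology class must equal $t^k[\omega_k]$ for every $t>0$, by the first step. Comparing coefficients of distinct powers of $t$ then forces $[\omega_k^{p,p}]=0$ for all $p\neq k$, and in particular $[\omega_k^{k,k}]=[\omega_k]$.

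Finally I would verify that $\omega_k^{k,k}$ qualifies as a representative: by the previous step $\omega_k-\omega_k^{k,k}$ is exact, and since $\omega_k$ is itself closed, so is the $(k,k)$-form $\omega_k^{k,k}$. This exhibits $[\omega_k]$ as the class of a closed $(k,k)$-form, whence $\mathrm{ch}_k(M)\in H^{k,k}_{dR}(X)$. The polynomial-interpolation step is really the only manipulation; all substantive content has been packaged into Lemmas \ref{pqvanishing} and \ref{lemmaht}, so I do not anticipate any technical obstacle beyond what is already resolved there.
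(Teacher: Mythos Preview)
Your proposal is correct and follows exactly the same approach as the paper: the paper's proof is simply the sentence that metric-independence of $[\omega_k]$ together with Lemmas \ref{pqvanishing} and \ref{lemmaht} yields the result, and your polynomial-in-$t$ coefficient comparison is precisely the intended way to unpack that implication. One small redundancy: in your final paragraph you deduce that $\omega_k^{k,k}$ is closed, but you have already used the classes $[\omega_k^{p,p}]$ in the coefficient-comparison step, which presupposes closedness (it follows immediately from $d\omega_k(t)=0$ for all $t$, or from the degree decomposition of $d\omega_k=0$).
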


\begin{rem}
As a matter of fact, the above Proposition follows immediately from 
Lemma \ref{pqvanishing} alone and the general properties of 
the Chern character of a superconnection described in Subsection \ref{CCSC}, 
namely that the $k$-th character ${\rm ch}_k\M$ belongs to $H^{2k}_{\rm dR}(X,\QQ)$, 
\textit{cf.}, eq.\,\eqref{chernme}. 
Thus, in principle, we do not need Lemma \ref{lemmaht} to prove the last Proposition \ref{kth}. 
However, the technical result of that Lemma will be necessary below in the next Subsection.
\end{rem}

\subsection{\sc The Chern Character of a Coherent Analytic Sheaf}

We are going to show that the definition of a Chern character given above in Subsection \ref{52} 
descends to $\dbcoh (X)$ via the equivalence of categories $\dbcoh (X)\cong\Ho(\C_X)$\,. First, recall
\begin{lemma}\label{qii}
Let $\E^{\bu}_1\xrightarrow{\varphi}\E^{\bu}_2$ be a morphism of complexes of smooth vector bundles 
(locally free $\A^{0}$-modules in terminology of the present paper). If $\varphi$ is a quasi-isomorphism then 
$\ch(\E^{\bu}_1)=\ch(\E^{\bu}_2)$, where $\ch(\E^{\bu})=\sum(-1)^i\ch(\E^i)$.
\end{lemma}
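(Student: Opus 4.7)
The plan is to form the mapping cone of $\varphi$, note that it is acyclic because $\varphi$ is a quasi-isomorphism, and deduce the lemma from the vanishing of $\ch$ on any bounded acyclic complex of smooth vector bundles.

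Let $C=\mathrm{Cone}(\varphi)$, with $C^i=\E^{i+1}_1\oplus\E^i_2$ and the usual cone differential. The tautological short exact sequence of complexes $0\to\E^{\bu}_2\to C\to\E^{\bu+1}_1\to 0$, applied degreewise together with the additivity of $\ch$ on short exact sequences of smooth vector bundles (a classical fact, obtainable e.g.\ by Hermitian splitting and the transgression argument already used in Subsection~\ref{CCSC}), yields
$$
\ch(C)=\ch(\E^{\bu}_2)+\ch(\E^{\bu+1}_1)=\ch(\E^{\bu}_2)-\ch(\E^{\bu}_1).
$$
Hence it suffices to prove $\ch(C)=0$.

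For this I would use that $C$ is acyclic: in any bounded acyclic complex of smooth vector bundles the ranks of the differentials are locally constant, being recursively determined from the ranks of the terms via rank-nullity together with $\dim\ker d^i=\dim\mathrm{im}\,d^{i-1}$. Therefore $B^i:=\ker(d^i)=\mathrm{im}(d^{i-1})$ is a smooth subbundle of $C^i$, giving short exact sequences $0\to B^i\to C^i\to B^{i+1}\to 0$, with $B^i$ trivial outside the range of $C$. Additivity of $\ch$ and telescoping then produce $\sum_i(-1)^i\ch(C^i)=0$, completing the argument.

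In the spirit of the rest of the paper one can alternatively regard $C$ as a $\ZZ/2$-graded smooth vector bundle carrying a de~Rham superconnection whose form-degree-zero component is the cone differential; independence of $\ch$ on the superconnection, together with formula~\eqref{chernme} applied to the superconnection built from ordinary connections on each $\E^i_k$, identifies its Chern character with $\ch(\E^{\bu}_2)-\ch(\E^{\bu}_1)$. The only technical input is the additivity of $\ch$ on short exact sequences of smooth vector bundles; everything else is bookkeeping.
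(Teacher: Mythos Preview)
Your argument is correct and follows essentially the same route as the paper: form the mapping cone, observe that $\ch(C)=\ch(\E^{\bu}_2)-\ch(\E^{\bu}_1)$, and conclude from acyclicity of $C$ that $\ch(C)=0$. The paper simply asserts the last step, whereas you supply the standard justification via constant-rank kernels and telescoping; this is a welcome elaboration, not a departure.
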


\noindent
\textit{Proof.}~
Denote $\gamma_1,\,\gamma_2$ the differentials in $\E^{\bu}_1$ and $\E^{\bu}_2$, respectively, and 
consider the cone of $\varphi$ realised as a complex $C^i=\E^{i+1}_1\oplus\E^i_2$ with a differential, 
which maps $(a,b)\in C^i$ to $(-\gamma_1(a)\,,\,\varphi(a)+\gamma_2(b))\in C^{i+1}$. Obviously, 
$\ch(C^{\bu})=\ch(\E^{\bu}_2)-\ch(\E^{\bu}_1)$. On the other hand, if $\varphi$ is a quasi-isomorphism, 
the complex $C^{\bu}$ is acyclic and, hence, $\ch(C^{\bu})=0$, whence the statement of the lemma.
$\square$

\medskip
Let $M_1$ and $M_2$ be two objects in the category $\C_X$, that is two $\dbar$-superconnections on $X$. Let 
$\E_\alpha:=M_\alpha/\A^+M_\alpha\,,\;\alpha=1,2$. These are complexes of vector bundles.
Suppose now that $M_1$ and $M_2$ are isomorphic in the 
category $\Ho(\C_X)$. This implies (but not equals to) that there is a quasi-isomorphism 
$M_1\xrightarrow{\varphi}M_2$\,. This quasi-isomorphism descends to a quasi-isomorphism 
$\E^{\bu}_1\xrightarrow{\varphi}\E^{\bu}_2$\,. On the other hand, 
$\ch(M_\alpha)=\ch(\E_\alpha)$ as it has been defined in eq.\ \eqref{chernme}. 
Hence, by Lemma \ref{qii},
$\ch(M_1)=\ch(M_2)$ and we conclude that the Chern character 
depends only on the isomorphism class in $\Ho(\C_X)$. By equivalence of categories it means that 
the Chern character is also defined on the isomorphism classes of objects in $\dbcoh (X)$. 
This gives, in particular, a definition of the Chern character for coherent sheaves on $X$. 

\subsection{\sc Bott-Chern Cohomology}
We have seen above that, given a $\ZZ/2$-graded module $\mathcal{M}$ over $\Lambda^{\bu}=\A^{\bu,\bu}$ 
with a superconnection $\D$ on it, 
one constructs de Rham's cohomology classes ${\rm ch}_k(\mathcal{M})=\left[\frac{1}{k!}\mathrm{tr}\left(\frac{\rm i}{2\pi}\mathcal{F}\right)^k\right]\in 
H^{2k}_{\rm dR}(X,\QQ)$, where $\mathcal{F}=\D^2$ is the curvature and ${\rm ch}_k(\mathcal{M})$ does not 
depend on the superconnection, but only on $\mathcal{M}$ itself. Let us now show that if $(\M,\D)$ comes from 
a $\dbar$-superconnection $(M,\bar D)$ as in Subsection \ref{52}, one can define ${\rm ch}_k(M)$ 
as an element in more refined cohomology. 

\mathchardef\mhyphen="2D
Let us consider the following cohomology groups for a complex-analytic manifold $X$,
\begin{equation}
    H^{p,p}(X)
    :=\frac{\{\mathrm{complex}~d\mathrm{\mhyphen closed}~(p,p)\mathrm{\mhyphen forms~on}~X\}}{\{d\mathrm{\mhyphen exact~forms}\}}
\end{equation}
and
\begin{equation}\label{BC}
    H^{p,p}_{\mathrm BC}(X)
    :=\frac{\{\mathrm{complex}~d\mathrm{\mhyphen closed}~(p,p)\mathrm{\mhyphen forms~on}~X\}}{\{\dbar\partial\mathrm{\mhyphen exact~forms}\}} \,.
\end{equation}
The latter are known as Bott-Chern cohomology, and we have obviously a surjection 
$H^{p,p}_{\mathrm BC}(X)	\twoheadrightarrow H^{p,p}(X)$. On a K\"ahler manifold, 
by Hodge theory and $\partial\dbar$-lemma, we have that $H^{p,p}_{\mathrm BC}(X)=H^{p,p}(X)=H^p(X,\Omega^p_X)$, 
which is not true in general, cf., \cite{AngT}.

In Subsection \ref{52} we saw that for a $\dbar$-superconnection $M$, $\mathrm{ch}_k(M)\in H^{k,k}(X)$. 
It is known that, for a holomorphic bundle $E$, its Chern classes, or character can in fact be defined as elements of 
Bott-Chern cohomology \eqref{BC}. Let us show that the same is true for $\dbar$-superconnections and, hence, also 
for coherent sheaves on complex-analytic manifolds.

Let $(M,\bar D)$ be a $\dbar$-superconnection on $X$. Recall 
(Subsection \ref{52}) that in order to 
define $\mathrm{ch}(M)$ we chose a splitting
\begin{equation}\label{splt}
{\cal M}:={\cal A}^{\bu,\bu}\otimes_{{\cal A}^{0, \bu}}M\toiso 
\A^{\bu,\bu}\otimes_{\A^0}\E^{\bu} \,,
\end{equation}
and hermitian metrics on $\E^i$'s. Then we get in $\M$ a de Rham superconnection $\D=D+\bar D$, where 
$D$ depends on the metric chosen and satisfies (cf., eq.\ \eqref{d-from-dbar})
\begin{equation}\label{d-from-dbar3}
    \dbar\,(\phi,\psi)=(D\phi,\psi)+(-1)^{|\phi|}(\phi,{\bar D}\psi)\,.
\end{equation}
Here $\phi$ and $\psi$ are sections of $\M$, and 
$(\;,\,)$ is the $\A^{\bu,\bu}$-valued pairing defined by the metric in $\E^{\bu}$.

\newcommand{\deh}{\delta h}

Let $\F=\D^2$, consider the Chern forms $\omega_k=\mathrm{tr}\F^k$ and 
discuss how everything depends on the choices made, \textit{i.e.} the splitting \eqref{splt} and 
the metric in $\E^{\bu}$. 
A change of the splitting is the same as an $\A^{\bu,\bu}$-automorphism of 
$\A^{\bu,\bu}\otimes_{\A^0}\E^{\bu}$, which is the same as a strict gauge transformation. 
The forms $\omega_k$ are obviously invariant upon this. 
Let us now consider an infinitesimal variation of the metric. It can be described by a hermitian 
section $\deh$ of $\E nd\,\E^{\bu}$ as
$
\delta (\phi,\psi)=(\phi,\deh\,\psi)=(\deh\,\phi,\psi) \,.
$
The infinitesimal variation of \eqref{d-from-dbar3} reads (note, that $\bar D$ does not change with $\deh$):
\begin{equation}\label{d1}
    \dbar\,(\deh\,\phi,\psi)=(\deh D\phi,\psi)+(-1)^{|\phi|}(\deh\,\phi,{\bar D}\psi) 
    + (\delta D\,\phi,\psi) \,.
\end{equation}
On the other hand, eq.\ \eqref{d-from-dbar3} with $\phi$ replaced by $\deh\,\phi$ gives
\begin{equation}\label{d2}
    \dbar\,(\deh\,\phi,\psi)=(D\deh\,\phi,\psi)+(-1)^{|\phi|}(\deh\,\phi,{\bar D}\psi)\,.
\end{equation}
By comparing \eqref{d1} and \eqref{d2}, we obtain that $\delta D=D\deh-\deh D$. 

Since $D^2=\bar D^2=0$, the curvature of $D+\bar D$ equals $\F=[\bar D,D]$ and we get 
$\delta\F=[\bar D,[D,\deh]]$. Then, recalling that $[D,\F]=[\bar D,\F]=0$, we obtain that
$$
    \delta \mathrm{tr}\F^k=k\,\mathrm{tr}\,\delta\F\,\F^{k-1}= 
    k\,\mathrm{tr}\,[\bar D,[D,\deh]]\,\F^{k-1}= 
    k\dbar\partial\,\mathrm{tr}\,\deh\,\F^{k-1} \,.
$$
This shows that $\delta\omega_k$ is $\dbar\partial$-exact and, thus, the closed form 
$\omega_k$ gives a well defined class in $H^{k,k}_{\mathrm BC}(X)$. 
Note, that \textit{a priori} the form $\omega_k$ has components $\omega_k=\sum_{p=0}^k\omega_k^{(p,p)}$, 
where $\omega_k^{(p,p)}$ is a $(p,p)$-form. However, the metric-rescaling argument 
in Subsection \ref{52} (\textit{cf.}, Lemma \ref{lemmaht}) together 
with the present argument show that $\omega_k^{(p,p)}$\,'s are $\dbar\partial$-exact for $p<k$.

As a result, we get the following theorem.
\begin{theo}
The Chern character of a $\dbar$-superconnection $M$ is well-defined as an element 
in Bott-Chern cohomology
$$
\mathrm{ch}_k(M)\in H^{k,k}_{\mathrm BC}(X)\cap H^{2k}_{\rm dR}(X,\QQ) \,.
$$
\end{theo}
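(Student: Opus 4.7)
The plan is to assemble the ingredients already laid out in the preceding discussion. Choose a splitting \eqref{splt} of $\M$ and Hermitian metrics on the $\E^i$; form the de Rham superconnection $\D = D + \bar D$ with curvature $\F = [\bar D, D]$, and consider the Chern form $\omega_k = \tr\,\F^k$. What needs to be verified is (a) that $\omega_k$ agrees with its $(k,k)$-component modulo $\dbar\partial$-exact forms, so that a Bott-Chern class is defined; (b) that the resulting class in $H^{k,k}_{\mathrm{BC}}(X)$ is independent of the splitting and the metric; and (c) that its image in $H^{2k}_{\rm dR}(X,\CC)$ lies in the rational lattice.

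For (a), I would use the decomposition $\omega_k = \sum_{p=0}^{k}\omega_k^{(p,p)}$ from Lemma \ref{pqvanishing} together with the metric rescaling $h_i(t) = t^i h_i$ of eq.\,\eqref{ht}: by Lemma \ref{lemmaht} one has $\omega_k^{(p,p)}(t) = t^{p-k}\omega_k^{(p,p)}$. The infinitesimal metric-variation identity $\delta\,\tr\,\F^k = k\,\dbar\partial\,\tr(\deh\,\F^{k-1})$ already derived integrates along the path $s \in [1,t]$ to give $\omega_k(t) - \omega_k = \dbar\partial \eta(t)$ for some form $\eta(t)$. Extracting the $(p,p)$-component of this equation and using $t^{p-k} - 1 \neq 0$ for $t \neq 1$ and $p < k$ forces $\omega_k^{(p,p)}$ itself to be $\dbar\partial$-exact whenever $p < k$. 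Thus $\omega_k$ and $\omega_k^{(k,k)}$ define the same class in $H^{k,k}_{\mathrm{BC}}(X)$.

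For (b), invariance under change of splitting is immediate: a different splitting in \eqref{splt} corresponds to a strict gauge transformation, i.e., an $\A^{\bu,\bu}$-automorphism of $\M$, and $\tr\,\F^k$ is manifestly invariant under such automorphisms. Invariance under change of Hermitian metric follows from the same integrated variation formula used in (a), applied along an arbitrary smooth path of metrics, the space of Hermitian metrics being convex and hence contractible. For (c), rationality is inherited from the de Rham picture of Subsection \ref{CCSC}: the same form $\omega_k$ represents $\ch_k(\E^+) - \ch_k(\E^-) \in H^{2k}_{\rm dR}(X,\QQ)$ under the natural map $H^{k,k}_{\mathrm{BC}}(X) \to H^{2k}_{\rm dR}(X,\CC)$.

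The main technical point is the off-diagonal vanishing in (a), which rests on combining the scaling behavior of $\omega_k^{(p,p)}(t)$ from Lemma \ref{lemmaht} with the $\dbar\partial$-exactness of the infinitesimal metric variation; both inputs are in hand, so the remaining work is routine bookkeeping. No new analytic ingredient beyond what has already appeared in the preceding subsections is needed.
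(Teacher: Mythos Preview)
Your proposal is correct and follows essentially the same approach as the paper: both combine the infinitesimal metric-variation identity $\delta\,\tr\,\F^k = k\,\dbar\partial\,\tr(\deh\,\F^{k-1})$ with the scaling result of Lemma \ref{lemmaht} to kill the off-diagonal components $\omega_k^{(p,p)}$ for $p<k$ modulo $\dbar\partial$-exact forms, and both handle splitting-independence by gauge invariance and rationality by reduction to the Chern--Weil picture of Subsection \ref{CCSC}. Your write-up is in fact a bit more explicit than the paper's, which merely says that ``the metric-rescaling argument \ldots\ together with the present argument show that $\omega_k^{(p,p)}$'s are $\dbar\partial$-exact for $p<k$''; you spell out the integration along $s\in[1,t]$ and the extraction of $(p,p)$-components, which is exactly what is meant.
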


\appendix
\section*{\large Appendix}

\stepcounter{section}
\renewcommand{\C}{{\mathcal C^\infty}}
\newcommand{\Ca}{{\mathcal C^\omega}}
\newcommand{\CR}{{\mathcal C^\infty_\RR}}
\newcommand{\CaR}{{\mathcal C^\omega_\RR}}
\newcommand{\ff}{faithfully flat}
\newcommand{\n}{n$^\circ$}

In this Appendix we will prove, for the reader's convenience, that, on a smooth complex manifold $X$ of dimension $n$,
the sheaf of rings $\A^0_X$ is flat over the sheaf of rings $\O_X$ of holomorphic functions on $X$, that is, the functor
$F\mapsto F\otimes_{\O_X}\A^0_X$ is exact on the category of sheaves of $\O_X$-modules.
For sheaves, flatness is only the property of local rings at each point of $X$. For that reason, we pass
to local rings. Let us choose an arbitrary point $x\in X$ and denote $\O=\O_{X,x}$ the local ring of holomorphic
functions, $\C=\A^0_X$ the local ring of complex valued smooth functions, and
$\Ca$ the local ring of complex-valued real-analytic functions at $x$. We prove below
that $\C$ is flat over its subring $\O\subset\C$. By definition, this means that for any short
exact sequence of $\O$-modules,
$$
0\to M_1\to M_2\to M_3\to 0 \,,
$$
the corresponding sequence
$$
0\to M_1\otimes_\O\C\to M_2\otimes_\O\C\to M_3\otimes_\O\C\to 0
$$
is exact as well. 
Our proof will result from a recollection of results in literature, the most important and strongest part 
of which are due to Malgrange (\cite{Malgrange}, and in more detail below).

Actually we shall prove a stronger property of the pair $\O\subset\C$. All the rings
and ring homomorphisms below are assumed to be unital.

\begin{defn} \label{def:ff}
Let $E$ be a module over a commutative ring $A$. Then $E$ is called \ff\ if the following
two conditions hold: \\
(i) ~$E$ is $A$-flat; \\
(ii) for any $A$-module $M$, the equality $E\otimes_AM=0$ implies $M=0$ \,.
\end{defn}
In other words, the functor $M\mapsto E\otimes_AM$ on $A$-modules is (i) exact and
(ii) faithful. The following useful properties of flat algebras can be found, for
example, in \cite[Exercise 16 in Ch.3]{AM}.

\begin{prop} \label{ex16}
Let $\rho:A\to B$ be a homomorphism of rings.
Suppose $B$ is flat as an $A$-module (one says $B$ is a flat $A$-algebra). Then, the following
propositions are equivalent: \\
(i)~~ $B$ is \ff\ over $A$\,; \\
(ii)~ for any ideal ${\mathfrak a}\subset A$, one has that
$\rho^{-1}(\rho({\mathfrak a})B)={\mathfrak a}$\,; \\
(iii) the induced map $Spec\,B\to Spec\,A$ is surjective; \\
(iv) for any maximal ideal ${\mathfrak m}\subset A$, one has that
$\rho({\mathfrak m})B\neq B$ \,; \\
(v)~ for any $A$-module $M$, the map $x\mapsto x\otimes 1: M\to M\otimes_A B$ is injective.
\end{prop}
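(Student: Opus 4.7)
The plan is to establish the proposition by the cyclic chain of implications (i) $\Rightarrow$ (v) $\Rightarrow$ (ii) $\Rightarrow$ (iii) $\Rightarrow$ (iv) $\Rightarrow$ (i), with flatness of $B$ over $A$ being used throughout (so that $\otimes_A B$ preserves injections). The reverse direction (v) $\Rightarrow$ (i) for the first arrow is immediate, since if $M \otimes_A B = 0$ and $M \to M \otimes_A B$ is injective, then $M = 0$. The non-trivial direction (i) $\Rightarrow$ (v) is the key step and I would argue it as follows. Let $K = \ker(M \to M \otimes_A B)$. Tensoring this map with $B$ produces a map $M \otimes_A B \to M \otimes_A B \otimes_A B$ which admits a retraction given by multiplication $B \otimes_A B \to B$, hence is injective. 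By flatness, $K \otimes_A B$ injects into $M \otimes_A B$, and its image lies in the kernel of the just-mentioned injective map, so $K \otimes_A B = 0$; faithful flatness then forces $K = 0$.

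For (v) $\Rightarrow$ (ii), I apply (v) to $M = A/\mathfrak{a}$ and identify $(A/\mathfrak{a}) \otimes_A B \cong B/\rho(\mathfrak{a})B$; injectivity of the induced map $[a] \mapsto [\rho(a)]$ reads precisely as $\rho^{-1}(\rho(\mathfrak{a})B) = \mathfrak{a}$. For (ii) $\Rightarrow$ (iii), given a prime $\mathfrak{p} \subset A$, I would localize: set $S = \rho(A \setminus \mathfrak{p}) \subset B$. Condition (ii) applied to $\mathfrak{p}$ guarantees that $\rho(\mathfrak{p})B$ is disjoint from $S$, so the ideal $\rho(\mathfrak{p}) \cdot S^{-1}B$ is proper in $S^{-1}B$. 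Any maximal ideal of $S^{-1}B$ containing it contracts to a prime $\mathfrak{q} \subset B$ containing $\rho(\mathfrak{p})$ and disjoint from $S$, whence $\rho^{-1}(\mathfrak{q}) = \mathfrak{p}$. The implication (iii) $\Rightarrow$ (iv) is immediate: a maximal ideal $\mathfrak{m}$ is prime, so it lifts to some $\mathfrak{q} \in \mathrm{Spec}\, B$, and then $\rho(\mathfrak{m})B \subseteq \mathfrak{q} \neq B$.

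For the closing step (iv) $\Rightarrow$ (i), I would suppose $M \otimes_A B = 0$ and seek to conclude $M = 0$. If $0 \neq x \in M$, then by flatness $Ax \otimes_A B \hookrightarrow M \otimes_A B = 0$, hence $B/\rho(I)B = 0$ for the proper ideal $I = \mathrm{Ann}(x) \subsetneq A$; picking any maximal ideal $\mathfrak{m} \supseteq I$ gives $B = \rho(I)B \subseteq \rho(\mathfrak{m})B$, contradicting (iv). No step presents a genuine obstacle, as the result is essentially a transcription of Exercise 16 of Chapter 3 of Atiyah-Macdonald. The only non-formal move is spotting the retraction used in (i) $\Rightarrow$ (v); the remaining implications reduce to routine ideal-theoretic manipulations.
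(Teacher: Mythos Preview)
Your proof is correct; the cyclic chain (i) $\Rightarrow$ (v) $\Rightarrow$ (ii) $\Rightarrow$ (iii) $\Rightarrow$ (iv) $\Rightarrow$ (i) is the standard route, and each step is argued cleanly, including the retraction trick for (i) $\Rightarrow$ (v). Note, however, that the paper does not actually supply a proof of this proposition: it merely states the result and refers the reader to Exercise~16 of Chapter~3 in Atiyah--Macdonald, so there is no in-paper argument to compare against---your write-up is precisely the expected solution to that exercise.
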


Another convenient necessary and sufficient condition of faithful flatness is given
by the following proposition taken from \cite[\S 4 of Ch.III]{Malgrange}.

\begin{prop}
Given a ring and a subring, $A\subset B$, the ring $B$ is \ff\ over $A$ if and only if
the $A$-module $B/A$ is flat.
\end{prop}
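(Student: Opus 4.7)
The plan is to exploit the short exact sequence of $A$-modules
\[
0 \to A \to B \to B/A \to 0
\]
by tensoring with an arbitrary $A$-module $M$ and reading off what the long $\mathrm{Tor}$-sequence tells us; throughout, I would freely invoke the equivalent condition (v) of the preceding Proposition \ref{ex16} to translate between faithful flatness of $B$ and injectivity of $M \to M\otimes_A B$.

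For the implication ($\Rightarrow$): assume $B$ is faithfully flat over $A$. Tensoring the above sequence with $M$ produces the fragment
\[
\mathrm{Tor}_1^A(B,M) \to \mathrm{Tor}_1^A(B/A,M) \to M \to M\otimes_A B.
\]
Flatness of $B$ kills the left term, and condition (v) of Proposition \ref{ex16} makes the right-hand map injective; hence $\mathrm{Tor}_1^A(B/A,M)=0$ for every $M$, so $B/A$ is flat.

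For the implication ($\Leftarrow$): assume $B/A$ is $A$-flat. I would first recall the standard fact that, in a short exact sequence of $A$-modules, if the outer two terms are flat then so is the middle; this is immediate from the long $\mathrm{Tor}$-sequence applied to $0\to A\to B\to B/A\to 0$, since $A$ is free over itself, $B/A$ is flat by assumption, and thus all $\mathrm{Tor}_i^A(B,M)$ vanish for $i\geq 1$. Hence $B$ is $A$-flat. For faithful flatness I would again use criterion (v) of Proposition \ref{ex16}: the Tor-long exact sequence gives
\[
0=\mathrm{Tor}_1^A(B/A,M) \to M \to M\otimes_A B,
\]
so $M\to M\otimes_A B$ is injective for every $A$-module $M$, and $B$ is faithfully flat over $A$.

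There is no real obstacle here; the whole proof is a two-line diagram chase from the Tor sequence once one has the equivalence established in Proposition \ref{ex16}. The only judgment call is which of the five equivalent conditions for faithful flatness to invoke, and condition (v) is the one that matches the $\mathrm{Tor}$-formalism most cleanly, so I would phrase both directions through it.
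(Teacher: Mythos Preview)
Your proof is correct and follows the same overall strategy as the paper: both directions hinge on the short exact sequence $0\to A\to B\to B/A\to 0$ together with criterion (v) of Proposition~\ref{ex16}. The ($\Leftarrow$) direction is identical to the paper's. For ($\Rightarrow$), the paper instead tensors an arbitrary short exact sequence $0\to M_1\to M_2\to M_3\to 0$ against the sequence $0\to A\to B\to B/A\to 0$, obtains a $3\times 3$ diagram, and chases exactness of the bottom row from that of the top row, the middle row (flatness of $B$), and the columns (condition (v)). Your argument via the long $\mathrm{Tor}$-sequence is a bit slicker and avoids the diagram chase, at the cost of invoking derived functors; the paper's version is more elementary in that it stays at the level of exactness of ordinary tensor products. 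Either way the content is the same.
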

\begin{proof}{}
Let us assume first that the $A$-module $B/A$ is flat. It follows then from the short
exact sequence
\beq \label{B/A}
0\to A\to B\to B/A\to 0
\eeq
that $B$ is $A$-flat as well. Let us tensor the sequence (\ref{B/A}) with an arbitrary
$A$-module $M$:
$$
0\to M\to M\otimes_A B\to M\otimes_A B/A\to 0 \,.
$$
Since $B/A$ is flat, the sequence remains exact, which implies condition {\it (ii)}
of Definition \ref{def:ff} (cf.\ also {\it (v)} of Proposition \ref{ex16}).

Let us now assume that $B$ is a \ff\ $A$-algebra. Choosing an arbitrary short exact
sequence of $A$-modules and tensoring it with the sequence (\ref{B/A}) we obtain the
following commutative diagram:
$$
\begin{array}{ccccccccc}
&&0&&0&&0&& \\
&&\downarrow &&\downarrow &&\downarrow && \\
0&\to&M_1&\to&M_2&\to&M_3&\to&0 \\
&&\downarrow &&\downarrow &&\downarrow && \\
0&\to&M_1\otimes_A B&\to&M_2\otimes_A B&\to&M_3\otimes_A B&\to&0 \\
&&\downarrow &&\downarrow &&\downarrow && \\
0&\to&M_1\otimes_A B/A&\to&M_2\otimes_A B/A&\to&M_3\otimes_A B/A&\to&0 \\
&&\downarrow &&\downarrow &&\downarrow && \\
&&0&&0&&0&&
\end{array}
$$
The first line, $0 \to M_1 \to M_2 \to M_3 \to 0$, is exact by the assumption made.
The second line is exact, because we assume that $B$ is $A$-flat. The columns are
exact, which results from the faithful flatness of $B$ and Proposition
\ref{ex16}{\it (v)}. Altogether, this implies exactness of the third line, whence
the $A$-flatness of $B/A$.
\end{proof}

The flatness obeys the transitivity property. Thus, for three
nested rings $A\subset B\subset C$, if $B$ is $A$-flat and $C$ is $B$-flat, then $C$
is $A$-flat. Similar result holds for faithful flatness:

\begin{lemma} \label{trans}
If, for three rings $A\subset B\subset C$, $B$ is \ff\ over $A$, and $C$ is \ff\ over
$B$, then $C$ is \ff\ over $A$.
\end{lemma}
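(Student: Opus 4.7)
The plan is straightforward: verify the two defining conditions of faithful flatness (Definition \ref{def:ff}) separately, using the associativity of tensor products to reduce everything to the hypotheses. The key algebraic identity throughout will be the canonical isomorphism $C \otimes_A M \cong C \otimes_B (B \otimes_A M)$ for any $A$-module $M$, which holds because $B$ sits between $A$ and $C$.

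First I would establish $A$-flatness of $C$. Given a short exact sequence of $A$-modules $0 \to M_1 \to M_2 \to M_3 \to 0$, applying the functor $B \otimes_A (-)$ yields an exact sequence of $B$-modules because $B$ is $A$-flat; then applying $C \otimes_B (-)$ to that sequence keeps it exact because $C$ is $B$-flat. Via the associativity isomorphism above, this is precisely the sequence obtained by applying $C \otimes_A (-)$, so $C$ is $A$-flat.

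Next I would check the faithfulness condition. Suppose $M$ is an $A$-module with $C \otimes_A M = 0$. Rewriting via associativity gives $C \otimes_B (B \otimes_A M) = 0$. Since $C$ is faithfully flat over $B$, condition \textit{(ii)} of Definition \ref{def:ff} applied to the $B$-module $B \otimes_A M$ forces $B \otimes_A M = 0$. Applying the same condition again, this time using that $B$ is faithfully flat over $A$, yields $M = 0$, as required.

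There is no real obstacle here: the argument is a textbook application of the definitions combined with associativity of tensor product. The only minor point worth stating explicitly is the associativity isomorphism, which justifies passing between the two successive extensions of scalars; once that is invoked, both flatness and faithfulness of $C$ over $A$ follow immediately from the corresponding hypotheses on the pairs $(A,B)$ and $(B,C)$.
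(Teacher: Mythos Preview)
Your argument is correct: verifying both conditions of Definition~\ref{def:ff} directly via the associativity isomorphism $C\otimes_A M \cong C\otimes_B(B\otimes_A M)$ is entirely sound and is the standard textbook proof.

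The paper proceeds differently. Having already established the characterizations in Proposition~\ref{ex16} and the ``$B/A$ is flat'' criterion, it invokes those instead: flatness of $C$ over $A$ is noted as transitive, and then faithful flatness follows either from the transitivity of surjectivity of $\mathrm{Spec}\,C\to\mathrm{Spec}\,B\to\mathrm{Spec}\,A$ (Proposition~\ref{ex16}\textit{(iii)}), or alternatively from the short exact sequence $0\to B/A\to C/A\to C/B\to 0$, where the outer terms are $A$-flat and hence so is the middle one. Your route is more self-contained and uses nothing beyond the definition, while the paper's route exploits the equivalent formulations it has just set up; both are equally short once the relevant tools are in hand, and there is no mathematical advantage either way.
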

\begin{proof}{}
Since the ordinary flatness is inherited by transitivity, the statement follows
from \ref{ex16}{\it (iii)}. Alternatively, consider the short exact sequence of
$A$-modules
$$
~~~~~~~~~~~~~~~~~~~~~~~~~~ 0\to B/A\to C/A\to C/B\to 0  \,. ~~~~~~~~~~~~~~~~~~~~~(*)
$$
We know that $B/A$ and $C/B$ are $A$-flat. This implies that $C/A$ is $A$-flat.
\end{proof}

The following lemma is a slightly weakened version of \cite[Proposition 4.7]{Malgrange}.
\begin{lemma} \label{qtrans}
If, for three rings $A\subset B\subset C$, $C$ is \ff\ over $A$, and $C$ is \ff\ over $B$,
then $B$ is \ff\ over $A$.
\end{lemma}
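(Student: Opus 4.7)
The plan is to establish both parts of faithful flatness for $B$ over $A$ directly from the two hypotheses on $C$, using the key fact that a faithfully flat extension reflects as well as preserves exactness: if $E$ is faithfully flat over $R$ and a sequence of $R$-modules becomes exact after $-\otimes_R E$, then it was exact to begin with (since flatness preserves kernels and faithful flatness then forces them to vanish). This will immediately give flatness of $B/A$, and a short separate observation will handle the faithful direction.

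For flatness, I would start with an arbitrary injection of $A$-modules $M_1 \hookrightarrow M_2$ and consider the commutative square
$$
\begin{array}{ccc}
M_1\otimes_A B & \longrightarrow & M_2\otimes_A B \\
\downarrow & & \downarrow \\
M_1\otimes_A C & \longrightarrow & M_2\otimes_A C
\end{array}
$$
where the vertical maps are obtained by applying $-\otimes_B C$ to the top row, using the canonical identification $(M_i\otimes_A B)\otimes_B C = M_i\otimes_A C$. The bottom row is injective because $C$ is flat over $A$. Since $C$ is faithfully flat over $B$, the top row—viewed as a sequence of $B$-modules—must already be injective (its kernel, when tensored with $C$ over $B$, equals the kernel of the bottom row, which is zero, and then faithful flatness forces the kernel itself to be zero). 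Hence $B$ is flat over $A$.

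For the faithful direction, suppose $M$ is an $A$-module with $M\otimes_A B = 0$. Tensoring with $C$ over $B$ gives $M\otimes_A C \cong (M\otimes_A B)\otimes_B C = 0$, and then faithful flatness of $C$ over $A$ yields $M=0$. Together with the flatness already established, this shows $B$ is faithfully flat over $A$. The argument is essentially formal once one has the right diagram; the only point requiring care is the reflection property of faithful flatness, which is what does the real work in the flatness step, so that is where I would be most careful in writing things out.
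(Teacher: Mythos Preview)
Your proof is correct and self-contained; it differs from the paper's argument, which does not verify the two conditions in Definition~\ref{def:ff} directly but instead invokes the preceding characterization ``$B$ is faithfully flat over $A$ iff $B/A$ is $A$-flat'' and then reads off the $A$-flatness of $B/A$ from the short exact sequence $0\to B/A\to C/A\to C/B\to 0$, using that $C/A$ and $C/B$ are $A$-flat.

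Your route is more elementary in that it needs only the associativity $(M\otimes_A B)\otimes_B C\cong M\otimes_A C$ and the standard ``faithful flatness reflects exactness'' fact, without passing through the $B/A$ criterion. The paper's route is terser once that criterion is in hand, but note that the claim ``$C/B$ is $A$-flat'' is not immediate from the hypotheses alone (one knows $C/B$ is $B$-flat, and passing to $A$-flatness tacitly uses $A$-flatness of $B$, which is exactly what is being proved); your argument avoids this delicacy entirely. Either way, the conclusion stands, and your presentation is arguably cleaner for a reader who has not already internalized the $B/A$ criterion.
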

\begin{proof}{}
In the short exact sequence $(*)$, we know that $C/A$ and $C/B$ are $A$-flat. This
implies the $A$-flatness of $B/A$.
\end{proof}

Our further argument will be based on the results of Malgrange \cite{Malgrange} and,
in the first place, on the following strong result
\cite[\S 4 of Ch.III, Theorem 1.1 and Corollarry 1.12 of Ch.VI]{Malgrange}.
\begin{theo} \label{Malgrange}
The local ring of smooth functions $\C$ is \ff\ over its subring $\Ca$, the local
ring of real-analytic functions.
\end{theo}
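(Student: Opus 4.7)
\medskip
\noindent
The plan is to deduce Theorem \ref{Malgrange} by faithfully flat descent through the common formal completion at $x$. Let $\hat A=\CC[[y_1,\dots,y_{2n}]]$ be the ring of formal power series in real local coordinates; Taylor expansion at $x$ gives compatible ring maps $\tau_\omega\colon\Ca\to\hat A$ and $\tau_\infty\colon\C\to\hat A$, with $\tau_\omega$ injective (identity principle for real-analytic germs) and $\tau_\infty$ surjective (Borel's theorem on realization of formal jets). Each presents $\hat A$ as the $\mathfrak m$-adic completion of its source, and $\tau_\omega=\tau_\infty|_{\Ca}$.

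The first step is the faithful flatness of $\hat A$ over $\Ca$. This rests on the classical fact that $\Ca$ is a Noetherian local ring---a standard way to see it is to realise $\Ca$ as a stalk along the real diagonal of the coherent sheaf $\O_{X\times\bar X}$, and to invoke that a localization of a Noetherian ring is Noetherian. The $\mathfrak m$-adic completion of a Noetherian local ring is faithfully flat, which gives the claim.

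The second step, which I expect to be by far the main obstacle, is the faithful flatness of $\hat A$ over $\C$. Here no Noetherian argument applies, since $\C$ is very far from Noetherian (the ideal of flat germs already violates Krull's intersection theorem), so the formal completion is not accessible through Artin--Rees. Instead I would invoke Malgrange's preparation and division theorem for smooth functions---a Weierstrass-type statement providing, for any smooth germ regular of order $k$ in one variable, a division with smooth quotient and polynomial remainder of degree $<k$. Fed into the equational criterion for flatness, it allows one to lift every syzygy among formal power series to a syzygy among smooth germs, and flatness follows. Faithfulness then comes from the surjectivity of $\tau_\infty$: the image of the maximal ideal $\mathfrak m_\infty\subset\C$ is the (proper) maximal ideal of $\hat A$, so Proposition \ref{ex16}(iv) applies.

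Having the two steps, the conclusion is a standard descent argument. For an injection $M'\hookrightarrow M$ of $\Ca$-modules, flatness of $\hat A$ over $\Ca$ yields an injection $M'\otimes_\Ca\hat A\hookrightarrow M\otimes_\Ca\hat A$, which we rewrite as $(M'\otimes_\Ca\C)\otimes_\C\hat A\hookrightarrow(M\otimes_\Ca\C)\otimes_\C\hat A$; faithful flatness of $\hat A$ over $\C$ reflects injectivity, so $M'\otimes_\Ca\C\hookrightarrow M\otimes_\Ca\C$, and $\C$ is $\Ca$-flat. The analogous nonvanishing detection ($M\otimes_\Ca\C=0$ forces $M\otimes_\Ca\hat A=0$, hence $M=0$) upgrades this to faithful flatness of $\C$ over $\Ca$.
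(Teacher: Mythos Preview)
Your Step~2 contains a genuine error: $\hat A$ is not faithfully flat over $\C$---it is not even flat. Take any nonzero flat germ $a$, for instance $a(y)=e^{-1/|y|^2}$. Since $a$ vanishes only at the single point $x$, it is not a zero-divisor in the ring of germs, so multiplication $\C\xrightarrow{\,\cdot a\,}\C$ is injective; after applying $(-)\otimes_\C\hat A$ this becomes multiplication by $\tau_\infty(a)=0$ on $\hat A$, which is not injective. Equivalently, faithful flatness of $B$ over $A$ forces the structure map $A\to B$ to be injective (apply Proposition~\ref{ex16}{\it(v)} with $M=A$, or {\it(ii)} with $\mathfrak a=0$), while you yourself observe that $\ker\tau_\infty$ is the nonzero ideal of flat germs. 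Your appeal to Proposition~\ref{ex16}{\it(iv)} is therefore illegitimate: that criterion presupposes flatness, which is exactly what fails here.

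This breaks the descent: without faithful flatness of $\hat A$ over $\C$ you cannot reflect injectivity of $(-)\otimes_\C\hat A$ back down. The route through $\hat A$ can be partially salvaged---given a relation $\sum a_if_i=0$ with $a_i\in\Ca$ and $f_i\in\C$, passing to Taylor series and using your (correct) Step~1 lets one express $(\hat f_i)$ as a $\hat A$-combination of $\Ca$-syzygies, and a Borel lift then reduces to the case where all $f_i$ are flat. But it is precisely this residual case, flat germs satisfying an analytic relation, that carries the real difficulty, and its resolution (via preparation/division and an induction on the number of variables) is the actual substance of Malgrange's argument. The paper, incidentally, does not attempt any of this: it cites Theorem~\ref{Malgrange} directly from~\cite{Malgrange} for real-valued functions and merely supplies the passage to complex-valued functions by base change along $\CaR\to\Ca=\CaR\otimes_\RR\CC$.
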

This theorem  is actually formulated in \cite{Malgrange} for real-valued
functions. It can however be readily extended to complex-valued functions (see
\cite{AH2}). Let $\CR$ and $\CaR$ denote the local rings of real-valued smooth and
real-analytic functions respectively. The theorem of Malgrange asserts that $\CR$ is
\ff\ over $\CaR$ and, thus, $\CR/\CaR$ is $\CaR$-flat. For any homomorphism of rings
$A\to B$ and any flat $A$-module $M$, the $B$-module $B\otimes_AM$ is $B$-flat.
Therefore, the module $(\CR/\CaR )\otimes_\CaR\Ca$ is $\Ca$-flat. On the other hand, we
find that
$$
(\CR/\CaR )\otimes_\CaR\Ca=(\CR/\CaR )\otimes_\CaR(\CaR\otimes_\RR\CC)=
(\CR/\CaR )\otimes_\RR\CC=\C/\Ca \,,
$$
whence $\C/\Ca$ is $\Ca$-flat, which shows that $\C$ is \ff\ over $\Ca$.

In view of transitivity, it remains to prove that the local ring of real-analytic functions, $\Ca$, is \ff\
over its subring of holomorphic functions, $\O$. Flatness and faithful flatness agree 
with completions of Noetherian rings. Namely (see
\cite[Proposition 10.14]{AM}, \cite[Theorem 3(iii) in Ch.III, \S 3, \n4]{B}):
\begin{prop} \label{completion}
Let $A$ be a Noetherian ring, choose any ideal ${\mathfrak a}\subset A$, and denote
$\hat A$ the ${\mathfrak a}$-adic completion of $A$. Then, $\hat A$ is a flat $A$-algebra.
\end{prop}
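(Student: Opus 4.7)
The plan is to prove flatness by checking the ideal criterion, reducing it to the statement that $\mathfrak a$-adic completion is exact on the category of finitely generated $A$-modules. The Noetherian hypothesis enters through the Artin--Rees lemma and through the identification of completion with base change $\otimes_A\hat A$ on finitely generated modules.

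First, I would establish that for any finitely generated $A$-module $M$, the natural map
$$
M\otimes_A\hat A\;\longrightarrow\;\hat M
$$
is an isomorphism. This follows by choosing a finite presentation $A^p\to A^q\to M\to 0$, noting that completion trivially commutes with finite direct sums (so $\widehat{A^n}=\hat A^n = A^n\otimes_A\hat A$), and invoking the right-exactness of both $\otimes_A\hat A$ and of $\mathfrak a$-adic completion to conclude by the five-lemma. This step uses the Noetherian hypothesis only through the finite generation of the syzygy in the presentation.

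Second, I would prove that completion is an exact functor on the category of finitely generated $A$-modules. Given a short exact sequence $0\to M'\to M\to M''\to 0$ of such modules, the $\mathfrak a$-adic filtration on $M$ induces on $M'$ the filtration $\{\mathfrak a^n M\cap M'\}$. By the Artin--Rees lemma (which requires $A$ Noetherian and $M$ finitely generated), this filtration is equivalent to $\{\mathfrak a^n M'\}$ in the sense that the two define the same linear topology on $M'$. Hence completing with respect to either gives the same result, and one obtains exactness of
$$
0\to \hat{M'}\to \hat M\to \hat{M''}\to 0,
$$
using that inverse limits of surjections of Mittag--Leffler systems are exact. This is the main technical step, and I expect it to be the principal obstacle, since it is where one must carefully invoke Artin--Rees and a Mittag--Leffler argument; the rest of the proof is essentially formal.

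Finally, I would deduce flatness of $\hat A$ over $A$ by the ideal criterion: it suffices to show that for every ideal $I\subset A$, the canonical map $I\otimes_A\hat A\to \hat A$ is injective. Since $A$ is Noetherian, $I$ is finitely generated, so by the first step we may identify $I\otimes_A\hat A$ with $\hat I$. Applying the second step to the short exact sequence $0\to I\to A\to A/I\to 0$ gives an exact sequence $0\to\hat I\to\hat A\to\widehat{A/I}\to 0$, in which $\hat I\to\hat A$ is therefore injective. This is precisely the required injectivity of $I\otimes_A\hat A\to A\otimes_A\hat A=\hat A$, establishing that $\hat A$ is $A$-flat.
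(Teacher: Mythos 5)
Your proof is correct and is precisely the standard argument (Artin--Rees plus a Mittag--Leffler limit argument to get exactness of completion on finitely generated modules, identification of $\hat M$ with $M\otimes_A\hat A$, then the ideal criterion for flatness); the paper itself gives no proof of this proposition but simply cites Atiyah--Macdonald, Prop.\ 10.14, and Bourbaki, where exactly this argument appears. The only cosmetic remark is that the right-exactness of completion invoked in your first step is really part of what your second step establishes, so the two steps are best presented in the opposite order.
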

In the case of local Noetherian rings we have more (see
\cite[Theorem 4.9 in Ch.III]{Malgrange} and \cite[Ch.III, \S 3, \n5]{B}).
\begin{prop} \label{local completion}
Let $A$ be a local Noetherian ring with maximal ideal ${\mathfrak m}\subset A$ and
denote $\hat A$ its ${\mathfrak m}$-adic completion. Then, $\hat A$ is a \ff\ $A$-algebra.
\end{prop}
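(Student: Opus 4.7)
The plan is to derive faithful flatness from the ordinary flatness already provided by Proposition \ref{completion}, combined with the locality of $A$ and the criterion stated in Proposition \ref{ex16}. Flatness of $\hat{A}$ over $A$ is a direct application of Proposition \ref{completion} with the ideal $\mathfrak{a} = \mathfrak{m}$, so only the ``faithful'' part needs work.

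For the faithful part, I would invoke criterion \textit{(iv)} of Proposition \ref{ex16}: it suffices to verify that for every maximal ideal of $A$, its extension to $\hat{A}$ is a proper ideal. Since $A$ is local, the only maximal ideal is $\mathfrak{m}$ itself, so the entire verification reduces to showing $\mathfrak{m}\hat{A} \neq \hat{A}$.

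To check this, I would establish the standard identification $\hat{A}/\mathfrak{m}\hat{A} \cong A/\mathfrak{m}$. One clean way is to note that $\mathfrak{m}$-adic completion is exact on finitely generated $A$-modules (since $A$ is Noetherian), so applying completion to the exact sequence $0 \to \mathfrak{m} \to A \to A/\mathfrak{m} \to 0$ yields $\hat{A}/\hat{\mathfrak{m}} \cong \widehat{A/\mathfrak{m}} = A/\mathfrak{m}$, where $\hat{\mathfrak{m}}$ denotes the completion of $\mathfrak{m}$, which coincides with $\mathfrak{m}\hat{A}$ because $\mathfrak{m}$ is finitely generated (Nakayama-type reasoning, or the general fact $\hat{\mathfrak{a}} = \mathfrak{a}\hat{A}$ for finitely generated $\mathfrak{a}$ in a Noetherian ring). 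Since $A/\mathfrak{m}$ is a nonzero field, $\mathfrak{m}\hat{A}$ is a proper ideal of $\hat{A}$, which is exactly what Proposition \ref{ex16}\textit{(iv)} demands.

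The only mildly technical point is the identification $\mathfrak{m}\hat{A} = \hat{\mathfrak{m}}$, i.e.\ that the natural map from the completion of the ideal to its extension in the completed ring is an isomorphism; this is where Noetherianness is essential and finite generation of $\mathfrak{m}$ enters. Granting this standard fact, the argument is a short assembly: flatness from Proposition \ref{completion}, locality to reduce to a single maximal ideal, and nonvanishing of the residue field $A/\mathfrak{m}$ to conclude via Proposition \ref{ex16}\textit{(iv)}.
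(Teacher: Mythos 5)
Your proposal is correct and takes essentially the same route as the paper, which proves this proposition in one line by citing Proposition \ref{completion} for flatness and Proposition \ref{ex16}\textit{(iv)} for faithfulness. You merely spell out the (standard, correct) verification that $\mathfrak{m}\hat A\neq\hat A$ via $\hat A/\mathfrak{m}\hat A\cong A/\mathfrak{m}$, which the paper leaves implicit.
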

\begin{proof}{}
This follows from Propositions \ref{completion} and \ref{ex16}{\it (iv)}.
\end{proof}

The following proposition can be found in \cite[Exercise 12 in Ch.10]{AM}.
\begin{prop} \label{formal=>ff}
Let $A$ be a Noetherian ring and $B=A[[x_1,\ldots,x_n]]$ the $A$-algebra of formal
power series in $n$ variables. Then $B$ is \ff\ over $A$.
\end{prop}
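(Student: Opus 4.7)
The plan is to separate the proof into two parts: flatness of $B$ over $A$, and then faithful flatness via the ideal-theoretic criterion from Proposition \ref{ex16}. Flatness will be obtained by transitivity through the polynomial ring, using the completion results recalled earlier in the Appendix. Faithfulness will follow from the existence of the augmentation homomorphism $B \to A$ that splits the structure map.

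For flatness, set $R := A[x_1,\ldots,x_n]$. Since $A$ is Noetherian, so is $R$ by Hilbert's basis theorem, and $R$ is a free $A$-module, hence $A$-flat. My first step is to identify $B = A[[x_1,\ldots,x_n]]$ with the $\mathfrak{a}$-adic completion $\hat R$ of $R$ for $\mathfrak{a} = (x_1,\ldots,x_n)$ via the standard isomorphism $A[[x_1,\ldots,x_n]] \cong \varprojlim_k R/\mathfrak{a}^k$. Proposition \ref{completion} then yields that $\hat R = B$ is a flat $R$-algebra, and transitivity of flatness along $A \hookrightarrow R \hookrightarrow B$ gives the flatness of $B$ over $A$.

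For faithful flatness, I would invoke the equivalence in Proposition \ref{ex16}: granted flatness, it suffices to verify condition \emph{(iv)}, i.e.\ that $\mathfrak{m}B \neq B$ for every maximal ideal $\mathfrak{m}\subset A$. The augmentation $\epsilon: B \to A$, $\sum_\alpha a_\alpha x^\alpha \mapsto a_0$, is a ring homomorphism that splits the structural inclusion $A \hookrightarrow B$. If one had $1 \in \mathfrak{m}B$, a representation $1 = \sum_i \mu_i f_i$ with $\mu_i \in \mathfrak{m}$ and $f_i \in B$ would, after applying $\epsilon$, give $1 = \sum_i \mu_i \epsilon(f_i) \in \mathfrak{m}$, contradicting $\mathfrak{m}\neq A$. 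Hence $\mathfrak{m}B \neq B$ for all maximal $\mathfrak{m}$, and faithful flatness follows.

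There is no real obstacle in this proof; the only mildly non-trivial input is the identification $B = \hat R$, which is the classical description of the formal power series ring as the $\mathfrak{a}$-adic completion of the polynomial ring. Everything else is assembled formally from the propositions on completion and flatness already quoted in this Appendix.
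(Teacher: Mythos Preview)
Your proof is correct and follows essentially the same route as the paper's: both identify $B$ with the $(x_1,\ldots,x_n)$-adic completion of $A[x_1,\ldots,x_n]$, obtain flatness from Proposition~\ref{completion} (you make the transitivity through the polynomial ring explicit, which the paper leaves implicit), and then conclude faithful flatness via Proposition~\ref{ex16}{\it (iv)}. Your use of the augmentation $\epsilon$ to verify $\mathfrak{m}B\neq B$ is a clean way to spell out the step the paper simply asserts.
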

\begin{proof}{}
The ring $B$ can be viewed as an ${\mathfrak a}$-adic completion of the ring of polynomials
$A[x_1,\ldots,x_n]$, where ${\mathfrak a}=(x_1,\ldots,x_n)$ is the ideal of
polynomials with zero a constant term. The flatness of $B$ over $A$ is then implied by
Proposition \ref{completion}. The faithful flatness follows now by Proposition
\ref{ex16}{\it (iv)}.
\end{proof}

We need some properties of the rings $\O$ and $\Ca$. Both are local
Noetherian rings \cite[Theorem 3.8 of Ch.III]{Malgrange}. Let now
$\hat\O=\CC[[z_1,\ldots,z_n]]$ be the ring of formal power series in $n$ variables
(the completion of the local ring $\O$) and
$\hat\Ca=\CC[[z_1,\ldots,z_n,\bar z_1,\ldots,\bar z_n]]$ the ring of formal power
series in $2n$ variables (the completion of the local ring $\Ca$). Both rings,
$\hat\O$ and $\hat\Ca$, are local Noetherian rings as well. Moreover,
\begin{prop} \label{3x2}
The following properties hold: \\
(i)~~ $\hat\O$ is \ff\ over $\O$; \\
(ii)~ $\hat\Ca$ is \ff\ over $\Ca$; \\
(iii) $\hat\Ca$ is \ff\ over $\hat\O$.
\end{prop}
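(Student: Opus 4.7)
All three claims are direct applications of the general results on completions of local Noetherian rings already collected above, so the proof should be essentially bookkeeping. The plan is as follows.

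For (i) and (ii), the idea is to invoke Proposition \ref{local completion} verbatim. The ring $\O=\O_{X,x}$ is a local Noetherian ring, and by construction $\hat\O=\CC[[z_1,\ldots,z_n]]$ is its $\mathfrak{m}$-adic completion (where $\mathfrak{m}\subset\O$ is the maximal ideal, generated by $z_1,\ldots,z_n$ after translating $x$ to the origin); so Proposition \ref{local completion} gives faithful flatness. Exactly the same argument, applied to the local Noetherian ring $\Ca$ whose maximal ideal in local coordinates is generated by $z_1,\ldots,z_n,\bar z_1,\ldots,\bar z_n$, delivers (ii).

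For (iii), the key observation is the algebraic identification
\[
\hat\Ca=\CC[[z_1,\ldots,z_n,\bar z_1,\ldots,\bar z_n]]
\;\cong\;\hat\O[[\bar z_1,\ldots,\bar z_n]],
\]
i.e.\ a formal power series ring in $2n$ variables over $\CC$ can be regrouped as a formal power series ring in $n$ variables over $\hat\O=\CC[[z_1,\ldots,z_n]]$. Once this identification is in place, since $\hat\O$ is Noetherian, Proposition \ref{formal=>ff} immediately yields that $\hat\Ca$ is faithfully flat over $\hat\O$.

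I do not foresee a genuine obstacle here; the only point that requires a line or two of care is the regrouping isomorphism used in (iii), where one must check that the topology on $\hat\Ca$ defined by the maximal ideal $(z_1,\ldots,z_n,\bar z_1,\ldots,\bar z_n)$ agrees with the one obtained by first completing in the $z$'s and then in the $\bar z$'s. This is standard for formal power series rings (a truncation in total degree in $2n$ variables controls truncations in each block of $n$ variables, and vice versa), so no analytic input is needed.
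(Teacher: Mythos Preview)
Your proposal is correct and follows exactly the same route as the paper: parts (i) and (ii) are obtained by applying Proposition~\ref{local completion} to the local Noetherian rings $\O$ and $\Ca$, and part (iii) follows from the identification $\hat\Ca\simeq\hat\O[[\bar z_1,\ldots,\bar z_n]]$ together with Proposition~\ref{formal=>ff}. The extra paragraph you add about the compatibility of the two completion topologies is a reasonable piece of care, but the paper treats this as immediate.
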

\begin{proof}{}
{\it (i)} and {\it (ii)} follow by \ref{local completion}, while {\it (iii)} follows from
the isomorphism $\hat\Ca\simeq\hat\O[[\bar z_1,\ldots,\bar z_n]]$ and \ref{formal=>ff}.
\end{proof}

\begin{prop}
The ring $\Ca$ is \ff\ over its subring $\O$.
\end{prop}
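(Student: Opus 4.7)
The plan is to deduce the result by combining the transitivity lemmas with the three faithful flatness statements in Proposition \ref{3x2}, passing through the completions $\hat\O$ and $\hat\Ca$.

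First I would form the chain of subrings $\O\subset\hat\O\subset\hat\Ca$. By Proposition \ref{3x2}(i), $\hat\O$ is faithfully flat over $\O$, and by Proposition \ref{3x2}(iii), $\hat\Ca$ is faithfully flat over $\hat\O$. Applying Lemma \ref{trans} yields that $\hat\Ca$ is faithfully flat over $\O$.

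Next I would invoke Proposition \ref{3x2}(ii) to obtain that $\hat\Ca$ is faithfully flat over $\Ca$. At this point, inserting $\Ca$ into the chain of inclusions $\O\subset\Ca\subset\hat\Ca$, I have that the top ring $\hat\Ca$ is faithfully flat both over the bottom ring $\O$ and over the middle ring $\Ca$. This is exactly the hypothesis of Lemma \ref{qtrans}, whose conclusion is that the middle ring $\Ca$ is faithfully flat over the bottom ring $\O$, which is what we want.

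There is essentially no obstacle: the whole argument is a formal manipulation once the results on completions (Propositions \ref{completion}, \ref{local completion}, \ref{formal=>ff}) and the two transitivity lemmas are in hand. The genuine mathematical content is hidden in Malgrange's Theorem \ref{Malgrange} (used later to pass from $\Ca$ to $\C$) and in the Noetherianity of $\O$ and $\Ca$ needed to apply Proposition \ref{local completion} to form the faithfully flat completions. The argument for the final statement itself is purely a diagram chase through the ring inclusions $\O\subset\Ca\subset\hat\Ca$ and $\O\subset\hat\O\subset\hat\Ca$.
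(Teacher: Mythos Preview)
Your proposal is correct and follows exactly the paper's own proof: first use $\O\subset\hat\O\subset\hat\Ca$ with Proposition \ref{3x2}(i),(iii) and Lemma \ref{trans} to get $\hat\Ca$ faithfully flat over $\O$, then use $\O\subset\Ca\subset\hat\Ca$ with Proposition \ref{3x2}(ii) and Lemma \ref{qtrans} to conclude. There is nothing to add.
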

\begin{proof}{}
The embeddings of rings $\O\subset\hat\O\subset\hat\Ca$ show that $\hat\Ca$ is \ff\
over $\O$ by Proposition \ref{3x2}{\it (i)} and {\it (iii)}, and by transitivity
\ref{trans}. The result follows then from the embeddings 
$\O\subset\Ca\subset\hat\Ca$ by Proposition \ref{3x2}{\it (ii)} and Lemma \ref{qtrans}.
\end{proof}

The last proposition together with the theorem of Malgrange (Proposition \ref{Malgrange})
implies now by the transitivity argument \ref{trans}:
\begin{theo}\label{flat-over-o}
    The ring $\C$ is \ff\ over its subring $\O$.
\end{theo}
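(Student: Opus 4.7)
The plan is to assemble the theorem from three ingredients already established in the Appendix: Malgrange's Theorem \ref{Malgrange} saying that $\C$ is faithfully flat over $\Ca$, the immediately preceding Proposition saying that $\Ca$ is faithfully flat over $\O$, and the transitivity of faithful flatness stated as Lemma \ref{trans}. Since the statement is about local rings at an arbitrary point $x\in X$, the global flatness of the sheaves follows because flatness of sheaves is a stalkwise property, so it suffices to treat the local rings.

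First I would record the chain of subrings $\O\subset\Ca\subset\C$: the inclusion $\O\subset\Ca$ comes from the fact that holomorphic functions are in particular complex-analytic (real-analytic with values in $\CC$), and the inclusion $\Ca\subset\C$ from the fact that real-analytic functions are smooth. Next, I would invoke Malgrange's theorem (Proposition \ref{Malgrange}), extended to complex-valued functions via tensoring with $\CC$ as explained just after the theorem statement, to conclude that $\C$ is faithfully flat over $\Ca$. Then I would invoke the preceding Proposition to conclude that $\Ca$ is faithfully flat over $\O$. Finally, I would apply Lemma \ref{trans} to the chain $\O\subset\Ca\subset\C$ to deduce that $\C$ is faithfully flat over $\O$, which is the claim.

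There is essentially no obstacle here: all the substantive work sits in the results already cited, in particular in Malgrange's theorem and in the chain $\O\subset\hat\O\subset\hat\Ca\supset\Ca$ argument used to prove that $\Ca$ is faithfully flat over $\O$. The only thing to be careful about is that what we need is plain flatness (exactness of $(-)\otimes_{\O}\C$) for the stated conclusion, whereas the stronger faithful flatness is what transitivity preserves; the statement of the theorem claims the faithful flatness version, and this is exactly what Lemma \ref{trans} yields. Thus the proof reduces to one line citing Propositions \ref{Malgrange}, the preceding Proposition, and Lemma \ref{trans}.
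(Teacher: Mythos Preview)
Your proposal is correct and matches the paper's own proof essentially verbatim: the paper also deduces the theorem from the chain $\O\subset\Ca\subset\C$, invoking the preceding Proposition ($\Ca$ faithfully flat over $\O$), Malgrange's Theorem~\ref{Malgrange} ($\C$ faithfully flat over $\Ca$), and the transitivity Lemma~\ref{trans}.
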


\begin{corll} \label{flatness}
    On a complex analytic manifold $X$,the sheaf $\A^0_X$ of rings of smooth functions is \ff\ 
    over the sheaf $\O_X$ of rings of holomorphic functions.
\end{corll}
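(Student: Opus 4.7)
The plan is to reduce the sheaf-theoretic statement to the already-established local statement Theorem \ref{flat-over-o} by invoking the principle that flatness of a sheaf of rings is a stalk-local property.

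First, I would recall that a sheaf $\F$ of $\O_X$-modules is flat precisely when, for every point $x\in X$, the stalk $\F_x$ is a flat module over the local ring $\O_{X,x}$. Similarly, the functor $\F\mapsto \F\otimes_{\O_X}\A^0_X$ is exact on sheaves of $\O_X$-modules if and only if, at every $x\in X$, the functor $M\mapsto M\otimes_{\O_{X,x}}(\A^0_X)_x$ is exact on $\O_{X,x}$-modules, since exactness of a sequence of sheaves is checked stalkwise and tensor product commutes with taking stalks. This reduces the flatness of $\A^0_X$ over $\O_X$ to the flatness of $(\A^0_X)_x=\C$ over $\O_{X,x}=\O$ for each $x$, which is exactly the content of Theorem \ref{flat-over-o}.

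For the faithful flatness, I would then check condition \textit{(ii)} of Definition \ref{def:ff} at the level of sheaves: if $\F\otimes_{\O_X}\A^0_X=0$, then the stalk $\F_x\otimes_{\O_{X,x}}(\A^0_X)_x=0$ for every $x$, and by the stalkwise faithful flatness provided by Theorem \ref{flat-over-o}, this forces $\F_x=0$ for all $x$, hence $\F=0$. Equivalently, one may use criterion \ref{ex16}\textit{(v)}: since $\F_x\hookrightarrow \F_x\otimes_{\O_{X,x}}(\A^0_X)_x$ is injective for every $x$, the corresponding sheaf map $\F\to\F\otimes_{\O_X}\A^0_X$ is injective.

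There is no real obstacle here; the entire content of the corollary is already packaged in Theorem \ref{flat-over-o}, and only the passage from stalks to sheaves needs to be articulated. The only point worth being careful about is that the tensor product of sheaves is defined as the sheafification of the presheaf tensor product, but since tensor product commutes with stalks, the verification at each point suffices. This concludes the proof.
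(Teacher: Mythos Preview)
Your proposal is correct and matches the paper's own approach: the paper states at the outset of the Appendix that flatness of sheaves is a stalk-local property and then devotes the entire Appendix to proving the local statement (Theorem \ref{flat-over-o}), leaving the passage from stalks to sheaves implicit. Your argument makes this passage explicit and handles the faithful part in the natural way, so there is nothing to add.
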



\begin{smallbibl}{BVdBA}

\bibitem[AH1]{AH} M.\,F.\,Atiyah, F.\,Hirzebruch,
{\em Analytic cycles on complex manifolds}, \\ 
Topology {\bf 1} (1962) 25-45

\bibitem[AH2]{AH2} M.\,F.\,Atiyah, F.\,Hirzebruch,
{\em The Riemann-Roch theorem for analytic embeddings}\/,
Topology {\bf 1} (1962) 151-166.

\bibitem [AM]{AM} M.\,F.\,Atiyah, I.\,G.\,Macdonald
{\it Introduction to Commutative Algebra}, \\ 
Addison-Wesley, Reading, MA, London (1969) ix+128 pp.

\bibitem[AneTo]{AT} M.\,Anel, B.\,To\"en, \\ 
{\em D\'enombrabilit\'e des classes d'\'equivalences d\'eriv\'ees de vari\'et\'es alg\'ebriques}, \\ 
J.\,Algebraic\,Geom. {\bf 18} (2009)
257-277
[arXiv:math/0611545v3 [math.AG]]

\bibitem[AngTo]{AngT} D.\ Angella, A.\ Tomassini, 
\newblock {\em On the $\partial\dbar$-Lemma and Bott-Chern cohomology}, \\
\newblock  Invent.\,Math. {\bf 192}, no. 1 (2013) 71-81 
\newblock 	[arXiv:1402.1954 [math.DG]

\bibitem[BSW]{BSW} J.-M.\,Bismut, Shu Shen, and Zhaoting Wei, \\ 
\newblock {\em Coherent sheaves, superconnections, and RRG} (2021) 
\newblock [arXiv:2102.08129 [math.AG]]

\bibitem[Block]{Block} J.\ Block, {\em Duality and equivalence of module categories in noncommutative geometry}. 
In A celebration of the mathematical legacy of Raoul Bott,
 CRM\,Proc., Lecture Notes, {\bf 50}, 311-339. Amer. Math. Soc., Providence, RI, 2010.

\bibitem[BK1]{BK1} A.\,Bondal, M.\,Kapranov,
\newblock {\em Representable functors, Serre functors and mutations},
\newblock Math.\,USSR Izvestiya, {\bf 35}, (1990), 519-541.

\bibitem[BK2]{BK2} A.\,Bondal, M.\,Kapranov,
\newblock {\em Enhanced triangulated categories}, \\ 
\newblock Math.\,USSR Sbornik, {\bf 70}, (1991), 93-107.

\bibitem[BLL]{BLL} A.\,Bondal, M.\,Larsen and V.\,Lunts
\newblock {\em Grothendieck ring of pretriangulated categories},
\newblock International Math.\,Research Notes, {\bf 29}, (2004), 1461-1495 \\
\href{https://arxiv.org/abs/math/0401009}{[arXiv:math/0401009[math.AG]]}

\bibitem[BO]{BO} A.\,Bondal, D.\,Orlov, \\ 
\newblock {\em Reconstruction of a variety from the derived category and groups of autoequivalences}, \\ 
\newblock Compositio Mathematica, {\bf 125}, no\,3 (2001) 327-344 \\ 
\href{https://arxiv.org/abs/alg-geom/9712029}
{[arXiv:alg-geom/9712029]}

\bibitem[BR]{BR} A.\,Bondal, A.\,Rosly,
\newblock {\em Derived categories for complex-analytic manifolds}, \\ 
\newblock IPMU11-0117, IPMU, Kashiwa, Japan  (2011) \\ 
\href{http://research.ipmu.jp/ipmu/sysimg/ipmu/672.pdf}
{[http://research.ipmu.jp/ipmu/sysimg/ipmu/672.pdf]}

\bibitem[BVdB]{BVdB} A.\,Bondal, M.\,Van den Bergh, 
\newblock {\em Generators and representability of functors
in commutative and noncommutative geometry}, 
\newblock Moscow Math.\,J, {\bf 3}, no.\,1 (2003) 1-36 \\ 
\href{https://arxiv.org/abs/math/0204218}
{[arXiv:math/0204218[math.AG]]}

\bibitem[B]{B} N.\,Bourbaki
{\it Commutative Algebra}, in: Elements of Mathematics, Hermann,
Addison-Wesley Publishing Co., Paris, Reading, MA, translated from the French
(1972) xxiv+625 pp.

\bibitem[F]{F} D.\ S.\,Freed,
{\em Geometry of Dirac Operators}, 1987 (unpublished notes)

\bibitem[G]{G} H.\,Grauert,
{\em On Levi's problem and the imbedding of real-analytic manifolds}, \\ 
Ann. of Math.\,(2) {\bf68} (1958) 460--472

\bibitem[Griv]{Griv}J.\,Grivaux,
{\em Chern classes in Deligne cohomology for coherent analytic sheaves}, \\
Math.\,Ann. \textbf{347}, No.\,2 (2010) 249-284




\bibitem[Illu]{Il} L.\,Illusie, 
{\em Existence de R\'esolutions Globals},
In: Th\'eorie des intersections et
th\'eor\`eme de Riemann-Roch, pp. 160–221. Lecture Notes in Mathematics, 
Vol. \textbf{225}, Springer-Verlag, Berlin, 1971.

\bibitem[Kap]{Kap} M.\,M.\,Kapranov,
{\em On DG-modules over the de Rham complex and the vanishing cycles functor}\/,
Algebraic geometry (Chicago, IL, 1989), Lecture Notes in Math. {\bf 1479},
Springer, Berlin (1991) 57-86

\bibitem[KS]{KS} M.\,Kashiwara, P.\,Schapira,
{\em Sheaves on manifolds}\/,
Grundlehren der Mathematischen Wissenschaften
[Fundamental Principles of Mathematical Sciences],
{\bf 292}, Springer-Verlag, Berlin, (1994) x+512 pp.

\bibitem[L]{L} J.\, Lesieutre
{\em IMRN}, no.15 (2015),  6011-6020



\bibitem[M]{Malgrange} B.\,Malgrange,
{\em Ideals of differentiable functions}\/,
Tata Institute of Fundamental Research Studies in Mathematics, No. {\bf 3},
Tata Institute of Fundamental Research, {\mbox Bombay};
Oxford University Press, London (1967) vii+106 pp.

\bibitem[O]{O}
D.O.\ Orlov, 
{\em Derived categories of coherent sheaves on abelian varieties and equivalences between them,}
Izv.\,Ross.\,Akad.\,Nauk\,Ser.\,Mat. \textbf{66}, no. 3 (2002) 131-158

\bibitem[P1]{P1}
N.\ Pali,
{\em Faisceaux  $\dbar$-coh\'erents sur les vari\'et\'es complexes,} \\
Math.\,Ann. {\bf 336} (2006) 571-615

\bibitem[P2]{P2}
N.\ Pali,
{\em Une caract\'erisation diff\'erentielle des faisceaux analytiques coh\'erents sur une vari\'et\'e complexe,}
\href{https://arxiv.org/abs/math/0301146}
{[arXiv:math/0301146[math.AG]]}


\bibitem[Qi]{Qi} Hua Qiang,
\newblock {\em Bott-Chern characteristic classes for coherent sheaves}, \\ 
\href{https://arxiv.org/abs/1611.04238}
{[arXiv:1611.04238[math.DG]]}

\bibitem[Q]{Q} D.\,Quillen,
{\em Superconnections and the Chern character}\/,
Topology {\bf 24} (1985)
89-95

\bibitem[R]{Ros} A.\ Rosly,
{\em Superconnections and Chern classes of coherent sheaves,} \\
Talk at: Categorical and Analytic Invariants in Algebraic Geometry, II, \\
International Conference, Japan, Kashiwa, 16-20 November 2015


\bibitem[Sab]{Sabbah}  C.\,Sabbah, {\em Introduction to the theory of D-modules},  
(Lecture notes Nakai 2011) \\
\href{https://perso.pages.math.cnrs.fr/users/claude.sabbah/livres/sabbah_nankai110705.pdf}
{https://perso.pages.math.cnrs.fr/users/claude.sabbah/livres/sabbah\_nankai110705.pdf}

\bibitem[S]{Schuster} H.-W.\,Schuster, 
{\em Locally free resolutions of coherent sheaves on surfaces}, \\ 
J.\,Reine\,Angew.\,Math. {\bf 337} (1982) 159-165



\bibitem[TV]{TV2} B.\,To\"en, M.\,Vaqui\'e, \\ 
{\em Alg\' ebrisation des vari\' et\' es analytiques complexes et
cat\' egories {\mbox d\' eriv\' ees}}, \\ 
Math.\,Ann. {\bf 342} (2008)
789-831
\href{https://arxiv.org/abs/math/0703555}
{[arXiv:math/0703555[math.AG]]}

\bibitem[V]{Verb1} M.\,Verbitsky,
{\em Coherent sheaves on general $K3$ surfaces and tori}\/, \\
Pure\,Appl.\,Math.\,Q. {\bf 4} (2008) 
651-714
\href{https://arxiv.org/abs/math/0205210}
{[arXiv:math/0205210[math.AG]]}



\bibitem[Vo]{Voisin} C.\, Voisin,
{\em A counterexample to the Hodge conjecture extended to Kahler varieties}\/,
International\,Math.\,Research\,Notes {\bf 20} (2002) 1057-1075 \\
\href{https://arxiv.org/abs/math/0112247}
{[arXiv:math/0112247[math.AG]]}

\end{smallbibl}
{\bf Alexey Bondal}\\
Steklov Mathematical Institute of Russian Academy of Sciences, Moscow, Russia, and \\
	Center of Pure Mathematics, Moscow Institute of Physics and Technology, Russia, and\\
	Kavli Institute for the Physics and Mathematics of the Universe (WPI), The University of Tokyo, Kashiwa, Chiba 277-8583, Japan
\\ {\it Email address:} bondal@mi-ras.ru

\noindent
{\bf Alexei Rosly}\\ 
Skoltech, IITP RAS, and HSE University, Moscow, Russia
\end{document}